\DeclareFontFamily{OMX}{MnSymbolE}{}
\DeclareSymbolFont{MnLargeSymbols}{OMX}{MnSymbolE}{m}{n}
\DeclareFontShape{OMX}{MnSymbolE}{m}{n}{
    <-6>  MnSymbolE5
   <6-7>  MnSymbolE6
   <7-8>  MnSymbolE7
   <8-9>  MnSymbolE8
   <9-10> MnSymbolE9
  <10-12> MnSymbolE10
  <12->   MnSymbolE12
}{}
\DeclareFontShape{OMX}{MnSymbolE}{b}{n}{
    <-6>  MnSymbolE-Bold5
   <6-7>  MnSymbolE-Bold6
   <7-8>  MnSymbolE-Bold7
   <8-9>  MnSymbolE-Bold8
   <9-10> MnSymbolE-Bold9
  <10-12> MnSymbolE-Bold10
  <12->   MnSymbolE-Bold12
}{}
\let\llangle\@undefined
\let\rrangle\@undefined
\DeclareMathDelimiter{\llangle}{\mathopen}%
                     {MnLargeSymbols}{'164}{MnLargeSymbols}{'164}
\DeclareMathDelimiter{\rrangle}{\mathclose}%
                     {MnLargeSymbols}{'171}{MnLargeSymbols}{'171}
\newtheorem{theorem}{Theorem}[section]
\newtheorem{maintheorem}{Theorem}
\newtheorem{proposition}[theorem]{Proposition}
\newtheorem{corollary}[theorem]{Corollary}
\newtheorem{lemma}[theorem]{Lemma}
\newtheorem{claim}{Claim}[theorem]
\theoremstyle{definition}
\newtheorem{question}[theorem]{Question}
\newtheorem{definition}[theorem]{Definition}
\theoremstyle{remark}
\newtheorem{remark}[theorem]{Remark}
\crefname{maintheorem}{Theorem}{Theorems}
\begin{document}

\title[Some questions on entangled linear orders]{Some questions on\\entangled linear orders}

\author[Carroy]{Rapha\"{e}l Carroy}
\address{Università degli Studi di Torino, Dipartimento di Matematica ``G. Peano", Via Carlo Alberto 10, 10123 Torino, Italy}
\curraddr{}
\email{raphael.carroy@unito.it}
\author[Levine]{Maxwell Levine}
\address{Albert-Ludwigs-Universit{\"{a}}t Freiburg, Mathematisches Institut, Abteilung f\"{u}r math. Logik, Ernst–Zermelo–Stra{\ss}e 1, 79104 Freiburg im Breisgau, Germany}
\email{maxwell.levine@mathematik.uni-freiburg.de}
\author[Notaro]{Lorenzo Notaro}
\address{University of Vienna, Institute of Mathematics, Kurt G\"{o}del Research Center, Kolingasse 14-16, 1090 Vienna, Austria}
\email{lorenzo.notaro@univie.ac.at}

\thanks{The first and third authors were partially supported by the Italian PRIN 2022 ``Models, sets and classifications” prot. 2022TECZJA and ``Logical Methods in Combinatorics" prot. 2022BXH4R5.
The research of the third author was funded in whole or in part by the Austrian Science Fund (FWF) \href{https://www.fwf.ac.at/en/research-radar/10.55776/ESP1829225}{10.55776/ESP1829225}. For open access purposes, the author has applied a CC BY public copyright license to any author accepted manuscript version arising from this submission.
The authors are grateful to the anonymous referee for their useful suggestions and remarks.}

\subjclass[2020]{Primary 03E05, Secondary 03E50, 03E15, 06A05}
\keywords{entangled sets, entangled linear orders, Baumgartner's axiom, diagonalization, Sierpinski}

\begin{abstract}
Entangled linear orders were introduced by Abraham and Shelah in \cite{avraham1981martin}. Todor{\v c}evi{\'c} \cite{MR799818} showed that these linear orders exist under $\mathsf{CH}$. We prove the following results: \vspace{0.4em}
\begin{enumerate}
\itemsep0.3em
\item If $\mathsf{CH}$ holds, then, for every $n > 1$, there is an $n$-entangled linear order which is not $(n+1)$-entangled.
\item If $\mathsf{CH}$ holds, then there are two homeomorphic sets of reals $A,B \subseteq \mathbb{R}$ such that $A$ is entangled but $B$ is not $2$-entangled.
\item If $\mathbb{R} \subseteq \mathsf{L}$, then there is an entangled $\Pi_1^1$ set of reals.
\item If $\diamondsuit$ holds, then there is a $2$-entangled non-separable linear order.
\end{enumerate}

\end{abstract}

\maketitle


\section{Introduction}

Cantor proved that all countable dense unbounded linear orders are isomorphic. The countability hypothesis in Cantor's result cannot be avoided in general. However, in the early 1970s, Baumgartner proved that consistently all $\aleph_1$-dense separable linear orders are isomorphic \cite{MR317934}. This statement is known as \emph{Baumgartner's axiom}. In their work \cite{avraham1981martin}, Abraham and Shelah introduced entangled linear orders as combinatorial objects witnessing the failure of Baumgartner's axiom.

An uncountable linear order $L$ is said to be \emph{$n$-entangled}, for some positive integer $n > 0$,
if for every uncountable collection $F$ of increasing, pairwise disjoint $n$-tuples of elements of $L$ and for every $t \in {}^n 2$,
there are $x, y \in F$ such that, for every $i < n$, $x_i < y_i$ if and only if $t_i = 0$.
A linear order is called \emph{entangled} if it is $n$-entangled for every $n > 0$ (see \cref{def:entangled} for a more general definition).

Entangled linear orders satisfy a strong form of rigidity\footnote{Indeed, entangled linear orders are called \emph{hyper-rigid} in \cite{MR776283}.}: they have no nontrivial self-embeddings\footnote{A linear order with this property is sometimes called \emph{exact} in the literature (see e.g. \cite{MR70683, MR95131}).} (a linear order with this property was first constructed in \cite{MR1919}); actually, they do not even have nontrivial monotone self-maps; furthermore, every two disjoint uncountable subsets of an entangled linear order have different order-types (a linear order with this property was first constructed, under $\mathsf{CH}$, in \cite{MR4862}).

Todor{\v c}evi{\'c} \cite{MR799818} showed that these linear orders provide a powerful method to construct counterexamples relative to various problems,
such as the productivity of chain conditions and the square bracket partition relation. He also observed that every $2$-entangled linear order is ccc and that every $3$-entangled linear order is separable (see \cref{sec:prel}). In particular, every entangled linear order is isomorphic to a suborder of the real line $\mathbb{R}$. Therefore, when studying entangled linear orders, it is not restrictive to only consider sets of reals that are entangled with respect to their natural linear ordering. For recent applications of entangled linear orders see e.g. \cite{MR3385104, MR4128471, MR4819970, MR4646731}.

The existence of an entangled linear order is independent of $\mathsf{ZFC}$: on one hand, these linear orders exist under $\mathsf{CH}$ (actually, under $\mathrm{cf}(2^{\aleph_0}) = \omega_1$) \cite{MR799818, MR776283}; on the other hand, they do not exist under Baumgartner's axiom \cite{avraham1981martin}.

In this article, we address  the following natural questions:\vspace{0.3em}

\begin{enumerate}
\itemsep0.3em
\item Does an $n$-entangled linear order need to be $(n+1)$-entangled?
\item Is entangledness a topological property? That is to say, if we have two homeomorphic sets of reals $A, B \subseteq \mathbb{R}$ with $A$ entangled, does it follow that $B$ is also entangled?
\item How definable can an entangled set of reals be?
\item Under which conditions is there a $2$-entangled linear order which is not separable? \vspace{0.3em}
\end{enumerate}

Our main results are the following. \lnote*{}{The first two are instances of more general statements proved later, formulated with reference to cardinal characteristics of the continuum (see Theorems~\ref{thm:1s} and~\ref{thm:2s}).}

\begin{maintheorem}\label{thm:1}
Assume $\mathsf{CH}$. Then, for every $n > 1$, there exists a $n$-entangled set of reals which is not $(n+1)$-entangled.
\end{maintheorem}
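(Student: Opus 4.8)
The plan is to run Todor{\v c}evi{\'c}'s $\mathsf{CH}$ construction of an entangled set of reals (see \cite{MR799818}), but to carry along, next to the usual genericity bookkeeping, a reserved uncountable family of $(n+1)$-tuples engineered to witness the failure of $(n+1)$-entangledness. Fix a continuous function $\sigma\colon (0,1)^n\to(2,3)$ that is strictly increasing in each of its variables and depends nontrivially on all of them, for instance $\sigma(v_0,\dots,v_{n-1})=2+\tfrac{1}{n+1}\sum_{i<n}v_i$. We build $E$ as a disjoint union $\bigsqcup_{\alpha<\omega_1}b_\alpha$ of $(n+1)$-element \emph{blocks} $b_\alpha=\{e^\alpha_0<\dots<e^\alpha_{n-1}\}\cup\{e^\alpha_n\}$, where $e^\alpha_0,\dots,e^\alpha_{n-1}\in(0,1)$, where $e^\alpha_n=\sigma(e^\alpha_0,\dots,e^\alpha_{n-1})\in(2,3)$, and where distinct blocks are pairwise disjoint. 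Note that $(e^\alpha_0,\dots,e^\alpha_n)$ is then automatically an increasing $(n+1)$-tuple, since its first $n$ entries lie in $(0,1)$ and its last entry in $(2,3)$; the only real choice at stage $\alpha$ is where to place the \emph{free part} $\bar e^\alpha\restriction n=(e^\alpha_0,\dots,e^\alpha_{n-1})\in(0,1)^n$, the point $e^\alpha_n$ being then determined.

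The recursion has length $\omega_1$. Using $\mathsf{CH}$, fix an enumeration in order type $\omega_1$ of all the tasks that make $E$ be $n$-entangled: as in the classical construction, a task specifies a pattern $t\in{}^n 2$ together with a real coding a countable approximation to a candidate uncountable family of pairwise disjoint increasing $n$-tuples, and asks that this family, if it really ends up being an uncountable family of $n$-tuples from $E$, realize $t$. At stage $\alpha$, having committed to $b_\beta$ for $\beta<\alpha$, we must pick the free part $\bar e^\alpha\restriction n\in(0,1)^n$ so that all reals of the new block are new (and $e^\alpha_n$ moreover avoids the countably many ``forbidden'' hyperplanes that would make it collide with an earlier point or earlier $\sigma$-value) and so that the $\alpha$-th task is served, exactly as in the classical construction save that a whole block of $n+1$ reals is now added at once rather than a single one. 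The heart of the proof is the verification that such a choice always exists. This is where the presence of $\sigma$ must be dealt with: serving a task amounts to forcing finitely many order relations between coordinates of the new block and finitely many earlier reals, and one has to check that the constraint $e^\alpha_n=\sigma(\bar e^\alpha\restriction n)$ does not make such a configuration unattainable. It does not, because $\sigma$ is continuous, coordinatewise strictly increasing, and genuinely depends on all $n$ free coordinates: any finite pattern of inequalities a task can demand is realized on an open set of free parts (one can always push $e^\alpha_n=\sigma(\bar e^\alpha\restriction n)$ into any target interval compatible with the prescribed ranges of the $e^\alpha_i$ by nudging the remaining free coordinates), and intersecting with the complement of the countably many forbidden hyperplanes keeps it nonempty.

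Granting this, put $E=\bigsqcup_{\alpha<\omega_1}b_\alpha$. That $E$ is $n$-entangled follows from the standard verification: any uncountable family of pairwise disjoint increasing $n$-tuples from $E$ is, after the usual $\Delta$-system and elementary-submodel reduction, anticipated (up to an uncountable subfamily) by one of the enumerated tasks, which was served at the corresponding stage. That $E$ is \emph{not} $(n+1)$-entangled is witnessed by the reserved family $G=\{(e^\alpha_0,\dots,e^\alpha_n):\alpha<\omega_1\}$: its members are pairwise disjoint increasing $(n+1)$-tuples from $E$, and no two of them realize the pattern $t^\ast=(0,\dots,0,1)\in{}^{n+1}2$, because whenever $e^\alpha_i<e^\beta_i$ for all $i<n$ the monotonicity of $\sigma$ gives $e^\alpha_n=\sigma(\bar e^\alpha\restriction n)<\sigma(\bar e^\beta\restriction n)=e^\beta_n$, so one can never simultaneously have $e^\alpha_i<e^\beta_i$ for all $i<n$ and $e^\alpha_n>e^\beta_n$. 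Since $n>1$, the word $t^\ast$ has length at least $3$ and is nonconstant, so this really does contradict $(n+1)$-entangledness.

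The main obstacle is exactly the recursion step just described, i.e.\ showing that pinning every new block to the graph of one fixed coordinatewise-monotone function $\sigma$ is harmless for the $n$-entangledness bookkeeping. The informal reason is that the only order relation this correlation ever imposes --- that a block cannot beat another block on all of its first $n$ coordinates while losing to it on the last --- is a genuinely $(n+1)$-ary constraint, invisible to tasks that only speak about $n$-tuples; turning this into a proof requires setting up (a version of) the classical construction carefully enough that every task remains serviceable with the new blocks confined to the graph of $\sigma$, and in particular checking that tasks whose $n$-tuples involve the dependent coordinates $e^\gamma_n$ cause no trouble.
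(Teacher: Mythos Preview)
Your underlying idea—pin the last coordinate of every block to a fixed coordinatewise strictly increasing $\sigma$, so that the block family automatically avoids the type $(0,\dots,0,1)$—is sound and genuinely different from the paper's route. But what you have submitted is a plan, not a proof, and the bookkeeping you describe is not Todor{\v c}evi{\'c}'s construction. His argument does \emph{not} enumerate ``countable approximations to candidate uncountable families of $n$-tuples'' and realize types for them stage by stage; there is no sensible such enumeration, and a single new block cannot ``serve'' such a task in any case. What the construction actually enumerates are the continuous partial maps $f\colon G_\delta\subseteq\mathbb{R}^{n-1}\to\mathbb{R}$, choosing each new real outside every $f_\gamma$ evaluated at tuples of earlier reals; $n$-entangledness then follows from the discontinuity criterion of \cref{prop:criterionforentangledness} and \cref{lemma:finalarg}. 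In your setting the honest recursion step is therefore: show that the set of bad free parts $(e^\alpha_0,\dots,e^\alpha_{n-1})\in(0,1)^n$—those for which some $e^\alpha_i$ with $i\le n$ equals some $f_\gamma$ applied to an $(n-1)$-tuple drawn from the earlier reals together with $e^\alpha_0,\dots,e^\alpha_{i-1}$—is meager. The only nontrivial case is $i=n$, where one must check that each level set $\{\sigma(v)=h(v)\}$, with $h$ continuous and depending on at most $n-1$ of the coordinates, is nowhere dense; your hypothesis that $\sigma$ is strictly increasing in the missing coordinate is exactly what gives this, but you have not written the argument.

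By contrast, the paper avoids the \emph{increasing} type rather than $(0,\dots,0,1)$, and for that type no fixed functional dependence suffices: the last coordinate must float inside a stage-dependent interval $[l_\alpha(s),u_\alpha(s)]$ determined by all earlier tuples. The price is that this interval can collapse to a point, and ruling out a conflict in that case is the delicate \cref{claim:thm:1}, whose proof in turn requires the carefully engineered countable seed set $D$ of \cref{lemma:1}. Your choice of avoided type sidesteps all of this—the constraint is local and never degenerates—so once you replace the vague ``task'' language by the continuous-function diagonalization above, your route should in fact yield a shorter argument than ours.
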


\begin{maintheorem}\label{thm:2}
Assume $\mathsf{CH}$. Then, there are two homeomorphic sets of reals $A,B\subseteq \mathbb{R}$ such that $A$ is entangled but $B$ is not $2$-entangled.
\end{maintheorem}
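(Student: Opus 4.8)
The plan is to reduce Theorem B to the construction, under $\mathsf{CH}$, of an \emph{entangled} set of reals of the form $A = A_0 \sqcup A_1$ with $A_0 \subseteq (0,1)$ and $A_1 \subseteq (2,3)$ both uncountable, together with a homeomorphism $\phi\colon A_0 \to A_1$. Granting this, put $B := A_0 \cup (A_0+2)$, where $A_0 + 2 := \{a+2 : a \in A_0\} \subseteq (2,3)$. Then $B$ is \emph{not} $2$-entangled: the set $F := \{(a,a+2) : a \in A_0\}$ is an uncountable collection of increasing, pairwise disjoint pairs of elements of $B$, and no $x,y \in F$ realize the pattern $t = (0,1)$, since $x_0 < y_0$ and $x_1 > y_1$ would mean $a < a'$ and $a+2 > a'+2$ at once. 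On the other hand $B$ is homeomorphic to $A$: the gap $[1,2]$ separates $(0,1)$ from $(2,3)$, so $A_0, A_1$ are clopen in $A$ while $A_0, A_0+2$ are clopen in $B$, and the bijection $h\colon A \to B$ which is the identity on $A_0$ and sends $x \mapsto \phi^{-1}(x)+2$ for $x \in A_1$ is a homeomorphism, being one on each clopen piece ($\phi^{-1}$ followed by a translation on $A_1$). So it remains to build $A$ together with $\phi$.

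I would build $A$ by a recursion of length $\omega_1$, adding at stage $\alpha$ a pair $(p_\alpha, q_\alpha) \in (0,1)\times(2,3)$, and setting $A_0 = \{p_\alpha : \alpha<\omega_1\}$, $A_1 = \{q_\alpha : \alpha<\omega_1\}$, $\phi(p_\alpha) = q_\alpha$. Two bookkeeping schemes run in parallel, fed by a $\mathsf{CH}$ enumeration. The first is Todor{\v c}evi{\'c}'s diagonalization making $A$ entangled: for every $n$, every $t \in {}^n 2$, and every candidate uncountable family of increasing pairwise disjoint $n$-tuples, one eventually places, out of freshly added points, an $n$-tuple realizing $t$ against a previously placed member of the candidate; since a candidate need only be treated through a countable approximation, there are only $\aleph_1$ such tasks. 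The second is a Sierpi{\'n}ski-style ``mirrored neighborhood'' device forcing $\phi$ to be a homeomorphism: alongside the finite approximations one maintains a shrinking system of open intervals assigned to the points already placed in $(0,1)$ and a matching system in $(2,3)$, arranged so that any point later added inside the interval reserved for $p_\beta$ has its $\phi$-image inside the interval reserved for $q_\beta$, and symmetrically; this makes $\phi$ continuous with continuous inverse, and $A_0, A_1$ dense in themselves. When a stage is devoted to an entangledness task, that task confines the new points to a nonempty open region $R$ determined by earlier choices; one picks them in $R$ inside the reserved interval currently being refined (possible because $R$ is open and nonempty and the union of the reserved intervals is a dense open subset of $(0,1)$), and then picks $q_\alpha$ at any legal point of the mirrored reserved interval, no order constraint on $\phi$ ever being imposed. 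Observe that entangledness of $A$ automatically forces $\phi$ to be non-monotone on every uncountable subset of $A_0$ — otherwise $A_0$ and $A_1$ would contain order-isomorphic, or order-anti-isomorphic, uncountable pieces, contradicting $2$-entangledness — and this ``twist'' is exactly what pulls $A$ and its homeomorphic image $B$ apart.

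The main obstacle is to reconcile the two schemes. One must set up the mirrored-neighborhood bookkeeping so that the reserved intervals never shrink faster than the entangledness tasks require and so that their union stays dense open in $(0,1)$, ensuring that each region $R$ forced by an entangledness task always meets an interval available for refinement; without this one could not simultaneously place a diagonalizing tuple and respect the homeomorphism requirement. A routine but necessary additional point is that the classical entangled-set construction can be run so as to yield a set split into two uncountable clopen pieces inside the prescribed intervals and dense in itself (reserve from the start disjoint blocks of stages for populating $(0,1)$, populating $(2,3)$, and meeting the density requirements). Once this is in place, $A$ is entangled, $\phi$ is a homeomorphism, and the reduction above gives Theorem B.
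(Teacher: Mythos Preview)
Your reduction in the first paragraph is correct and is essentially the paper's \cref{cor:2}: once you have an entangled set split into two uncountable clopen pieces that are homeomorphic to each other, composing one half with a translation (an increasing homeomorphism) produces the non-$2$-entangled $B$. The problem is in the construction of $A$ and $\phi$.

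The obstacle you name in your third paragraph is real, and the sketch does not overcome it. Consider the family $F = \{(p_\alpha, q_\alpha) : \alpha < \omega_1\}$: this is an uncountable set of pairwise disjoint pairs from $A$, so for $A$ to be $2$-entangled every type must be realized inside $F$---equivalently, $\phi$ must fail to be monotone on every uncountable subset of $A_0$. Todor\v{c}evi\'{c}'s diagonalization cannot deliver this. That argument (see \cref{prop:criterionforentangledness}) enumerates continuous functions $f_\gamma$ on $G_\delta$ domains and chooses each new real outside the $f_\gamma$-images of earlier reals; entangledness follows because any bad family, read as the graph of a map on shorter tuples, must then have continuum-many discontinuity points (\cref{lemma:finalarg}). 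But your $F$ is \emph{by design} the graph of the continuous map $\phi$, which by Kuratowski's theorem extends to some $f_{\gamma_0}$ in the enumeration; the avoidance requirement at any stage $\alpha>\gamma_0$ would then read $q_\alpha \ne f_{\gamma_0}(p_\alpha) = q_\alpha$. So either the avoidance scheme is silent about $F$ and you have no argument that $F$ realizes all types, or $\phi$ is not continuous after all. Your observation that ``entangledness of $A$ automatically forces $\phi$ to be non-monotone'' is correct but circular here: that non-monotonicity is exactly the missing hypothesis you need in order to \emph{prove} entangledness for families like $F$, not a consequence of it. (Separately, ``a candidate need only be treated through a countable approximation'' does not describe any workable version of the diagonalization; one enumerates continuous functions, not uncountable families or countable fragments thereof.) The paper sidesteps the whole issue by not building $\phi$ during the recursion: it fixes in advance a concrete autohomeomorphism $s_2$ of Cantor space (flip the even-indexed bits) whose sets of monotonicity are precisely the \emph{monochromatic} sets, and then runs a diagonalization under $\mathfrak{hm}=2^{\aleph_0}$ in which each new point avoids all previously coded monochromatic sections. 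This simultaneously yields the ordinary entangledness criterion \emph{and} forces $s_2$ to be non-monotone on every uncountable subset, which handles exactly the families your sketch cannot reach.
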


Recall that $\Pi^0_1$ sets are complements of semirecursive sets, also called effectively open sets. 
Projections of $\Pi^0_1$ sets are denoted $\Sigma^1_1$, and their complements are the $\Pi^1_1$ sets. As we argue in \cref{sec:thm3}, every analytic set of reals is not entangled. In this sense, our next result is optimal.

\begin{maintheorem}\label{thm:3}
Suppose that $ \mathbb{R}\subseteq \mathsf{L}$. Then there exists a $\Pi_1^1$ set of reals which is entangled.
\end{maintheorem}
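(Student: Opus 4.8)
The plan is to re-run the classical $\mathsf{CH}$-construction of an entangled set of reals (Todor\v{c}evi\'{c}) in a ``localized'' and canonical form, and then to read off a $\Pi^1_1$ definition using the now-standard technique for building $\Pi^1_1$ sets by transfinite recursion inside $\mathsf{L}$ (Erd\H{o}s--Kunen--Mauldin; A.~Miller; Vidny\'{a}nszky). I shall use the hypothesis $\mathbb{R}\subseteq\mathsf{L}$ only through the facts that it implies $\mathsf{CH}$ and $\omega_1^{\mathsf{L}}=\omega_1$, that $\mathsf{L}$ satisfies condensation, and that $<_{\mathsf{L}}\!\restriction\!\mathbb{R}$ is a $\Sigma^1_2$-good wellordering of $\mathbb{R}$ of order type $\omega_1$; in particular $\mathbb{R}\subseteq\mathsf{L}_{\omega_1}$.

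First I would fix a bookkeeping, defined from $<_{\mathsf{L}}$, enumerating in order type $\omega_1$ all ``requirements'': given $n>0$, $t\in{}^n 2$, and a code for a scheme that produces increasing, pairwise disjoint $n$-tuples from the reals listed so far and prescribes how to continue, the associated requirement asks that this scheme be \emph{defeated}, i.e.\ that along its continuation the order pattern $t$ eventually occur. Then I would build $A=\{a_\alpha:\alpha<\omega_1\}\subseteq{}^\omega 2$ by recursion, letting $a_\alpha$ be the $<_{\mathsf{L}}$-least real, distinct from all $a_\beta$ with $\beta<\alpha$, that defeats the $\alpha$-th requirement; such a real exists by the usual genericity argument, the point being only that the (finitely, or countably, many) side conditions thereby imposed on $a_\alpha$ are jointly consistent. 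The verification that $A$ is entangled is Todor\v{c}evi\'{c}'s, and I would reproduce it: if some uncountable family $F$ of increasing, pairwise disjoint $n$-tuples from $A$ witnessed, together with some $t$, that $A$ is not $n$-entangled, then after the customary $\Delta$-system and pressing-down thinning the ``initial fragment plus tail behaviour'' of $F$ coincides, on a club of $\alpha<\omega_1$, with a requirement listed before stage $\alpha$; but then the choice of $a_\alpha$ at any such stage realizes $t$ inside $F$, a contradiction.

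It then remains to see that $A$ is $\Pi^1_1$. The enabling feature is that the recursion is \emph{local and absolute}: the bookkeeping and each transition $\langle a_\beta:\beta<\alpha\rangle\mapsto a_\alpha$ are $\Sigma_1$-definable over $\mathsf{L}_{\omega_1}$ --- everything in sight is computed from $<_{\mathsf{L}}\!\restriction\!\mathbb{R}$ together with finitely many parameters --- so by condensation every countable $\mathsf{L}_\gamma$ modelling a fixed finite fragment $T_0$ of $\mathsf{ZFC}$ plus the axiom of constructibility, with $\gamma$ past the stage in question, contains $a_\alpha$ and correctly computes the assertion ``$a_\alpha$ is the real added at stage $\alpha$''. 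Granting this, one checks in the by-now-routine way --- exactly as when producing $\Pi^1_1$ Hamel bases, maximal almost disjoint families, maximal chains, and the like (Erd\H{o}s--Kunen--Mauldin; A.~Miller; Vidny\'{a}nszky) --- that $x\in A$ is equivalent to the existence of a (code for a) countable model of $T_0$ containing $x$ that witnesses $x$ being added to the set, and, via the standard auxiliary bookkeeping that keeps the relevant stages inside the wellfounded part of such a model, this unwinds to a $\Sigma^1_1$ definition of $\mathbb{R}\setminus A$, hence a $\Pi^1_1$ definition of $A$. (By the discussion in \cref{sec:thm3} no entangled set of reals can even be analytic, so this is optimal.)

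I expect the main obstacle to be the first half: putting Todor\v{c}evi\'{c}'s construction into a form that is at once (a) carried out with canonical, $<_{\mathsf{L}}$-least choices at \emph{every} stage, leaving no room for any ``global'' decision, and (b) verified using only data reflected to countable elementary submodels, so that defeating every \emph{locally presented} candidate counterexample genuinely suffices to defeat every \emph{global} one --- this last point being exactly where the $\Delta$-system and pressing-down reflection carries the argument. Once the construction is in this localized shape, obtaining the $\Pi^1_1$ definition is the standard coding argument, whose only delicate ingredient, the wellfoundedness bookkeeping, is handled precisely as in the references cited above.
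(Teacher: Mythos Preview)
Your approach is viable but differs substantially from the paper's. The paper does not diagonalize and then argue definability; rather it writes down an explicit $\Pi^1_1$ set $E$ first---a set of canonical codes $x$ for levels $\mathsf{L}_\alpha$ with $\alpha$ countable, admissible, and satisfying ``every set is countable'', made unique by requiring each $(x)_{i+1}$ to be the $<_\mathsf{L}$-least code for the first limit level containing $(x)_i$, exactly in the style of G\"{o}del's $\Pi^1_1$ set lacking the perfect set property---and then verifies entangledness via \cref{prop:criterionforentangledness}: since a code $x_\rho$ for $\mathsf{L}_{\alpha_\rho}$ cannot lie in $\mathsf{L}_{\alpha_\rho}$, no continuous $f$ coded by a real in some earlier $\mathsf{L}_{\alpha_\nu}$ can send tuples of earlier codes to $x_\rho$. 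Your route---run Todor{\v c}evi{\'c}'s construction with $<_\mathsf{L}$-least choices and invoke the Miller/Vidny\'{a}nszky coanalytic-recursion machinery---is more modular and transfers readily to other $\mathsf{CH}$ constructions, at the cost of a less explicit $\Pi^1_1$ definition and dependence on an external black box. One caveat: your description of the ``requirements'' as schemes producing $n$-tuples to be defeated by realizing a type does not match how the construction actually proceeds; what one diagonalizes against is the family of continuous maps on $G_\delta$ domains, choosing each new real to be \emph{missed} by all of them applied to tuples of earlier reals (this is precisely the hypothesis of \cref{prop:criterionforentangledness}), and entangledness then follows from \cref{lemma:finalarg} rather than from any pressing-down argument. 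With that correction your plan goes through.
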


Since, as we already noted, every $2$-entangled linear order is ccc, a $2$-entangled non-separable linear order is, in particular, a Suslin line. In particular, we conclude that $\mathsf{CH}$---from which the existence of an entangled linear order follows---does not suffice to prove the existence of a $2$-entangled non-separable linear order. Indeed, it is well-known that Suslin's Hypothesis is consistent with $\mathsf{CH}$ \cite{zbMATH03453594}. However, we prove that the existence of a $2$-entangled non-separable linear order follows from $\diamondsuit$. Note that the existence of such a linear order is already known to be consistent by a forcing argument due to Krueger \cite{MR4080091}.

\begin{maintheorem}\label{thm:4}
Suppose that $\diamondsuit$ holds. Then, there exists a $2$-entangled non-separable linear order.
\end{maintheorem}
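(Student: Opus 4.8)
The plan is to construct the required linear order as the lexicographic line of a suitable Suslin tree obtained by a $\diamondsuit$-recursion. Recall that for any normal Suslin tree $T$ -- say a binary one, or one in which the immediate successors of each node are ordered like $\mathbb{Q}$ -- the lexicographic order $L=(T,<_L)$, in which each node lies to the left of all of its proper extensions and incomparable nodes are compared at their meet, is ccc and, crucially, non-separable: a countable $D\subseteq T$ lies in the subtree $T_{<\alpha}$ consisting of the first $\alpha$ levels for some $\alpha<\omega_1$, and for any node $s$ of level $\alpha$ two of its immediate successors bound a nonempty $<_L$-interval disjoint from $T_{<\alpha}$, whence $D$ is not dense in $L$. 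In view of the reformulation that $L$ is $2$-entangled precisely when it has no uncountable \emph{monotone} and no uncountable \emph{anti-monotone} family of pairwise disjoint increasing pairs -- equivalently, no two disjoint uncountable subsets of $L$ are order-isomorphic and no two are order-anti-isomorphic -- and since every $2$-entangled linear order is ccc, it therefore suffices to build a normal Suslin tree $T$ whose lexicographic line has this non-embedding property.

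The construction is the usual $\diamondsuit$-construction of a Suslin tree, augmented by a diagonalization against monotone and anti-monotone families. One builds the levels of $T$ by recursion on $\delta<\omega_1$, keeping $T$ normal, and at each limit level $\delta$ one chooses the $\delta$-th level so as to seal the object guessed by $\diamondsuit$: as usual, if the guess codes a maximal antichain of $T_{<\delta}$ one extends only through it; but one also lets $\diamondsuit$ guess codes for pairs $(\mathcal{F},t)$ with $t\in{}^2 2$ and $\mathcal{F}$ a countable pairwise-disjoint family of increasing pairs of nodes of $T_{<\delta}$, to be read as a putative uncountable monotone (resp.\ anti-monotone) family $F$ of increasing pairs of $L$ that avoids the pattern $t$ and has $\mathcal{F}=\{(a,b)\in F:a,b\in T_{<\delta}\}$. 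At such a stage one must choose the $\delta$-th level so that no uncountable extension of $\mathcal{F}$ inside the finished $L$ still avoids $t$. The verification is by reflection: given a supposed uncountable pairwise-disjoint family $F$ of increasing pairs of $L$ avoiding some pattern $t$, reindex it so that for a club of $\delta$ the pairs of $F$ with both coordinates in $T_{<\delta}$ form an initial segment $F_{<\delta}$ of that reindexing, choose $\delta$ of the form $M\cap\omega_1$ for a suitable countable $M\prec H_{\omega_2}$ containing the relevant data, lying in that club and such that $\diamondsuit$ guesses $(F_{<\delta},t)$ correctly at $\delta$ (the set of such $\delta$ is stationary, so this is possible); since $F$ is uncountable all but countably many of its pairs have a coordinate of level $\ge\delta$, and the way the $\delta$-th level was chosen forces such a pair, together with a pair of $F_{<\delta}$, to realize $t$, a contradiction.

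The heart of the matter -- and the main obstacle -- is exactly the sealing step at limit levels. A maximal antichain is predense below $T_{<\delta}$ in a transparent way, but a monotone family $F$ carries its essential information in pairs of nodes at high levels, with no such clean relationship to $T_{<\delta}$; so one must identify the correct combinatorial target for the choice of the $\delta$-th level -- roughly, one groups the high-level coordinates of a potential $t$-avoiding extension according to which $\delta$-level node they pass through, and arranges the immediate structure one controls above those $\delta$-level nodes so that any continuation yielding an uncountable $t$-avoiding extension is forced into a $t$-pattern with a pair of $\mathcal{F}$ -- all while keeping $T$ normal (every node of level $<\delta$ still extending to level $\delta$), respecting the sealing of antichains, and catering to every guessed family along the recursion. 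Producing a single coherent bookkeeping that accomplishes all of this is the substance of the proof; it is the $\diamondsuit$-theoretic counterpart of Krueger's forcing construction from \cite{MR4080091}, and follows the template of Todor{\v c}evi{\'c}'s $\diamondsuit$-constructions of Suslin lines with prescribed combinatorial behaviour as in \cite{MR799818}.
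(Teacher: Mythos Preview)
Your outline matches the paper's strategy: build an $\omega_1$-tree by $\diamondsuit$-recursion, take its lexicographic linearization, and diagonalize at limit levels against guessed monotone data so that the resulting order is $2$-entangled. But what you have written is a plan, not a proof. You yourself identify the sealing step at limit levels as ``the substance of the proof'' and then do not carry it out; saying that one must ``arrange the immediate structure one controls above those $\delta$-level nodes'' so that any continuation is forced into a $t$-pattern is not the same as showing this can be done while every node of level $<\delta$ still extends to level $\delta$. The paper's proof consists precisely of this construction: at a limit $\alpha$ it defines the cofinal branches $(z^n_m)_m$ below each new node $(\alpha,n)$ by an explicit case analysis (eight cases for $z^n_0$ depending on how the guessed map sits relative to $b_n$, four for each successor step), simultaneously ensuring $b_n \lhd (\alpha,n)$ --- so normality survives --- and that for every relevant pair $(x,y)$ the map $f_\alpha\cup\{(x,y)\}$ fails to be monotone. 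Without this, you have only asserted that the obstacle can be overcome.

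Two smaller points of divergence. First, the paper does not seal antichains: once the order is $2$-entangled it is ccc automatically, so whether the underlying tree is Suslin is irrelevant and there is no need to interleave antichain-killing with the main diagonalization. Second, the paper guesses \emph{maximal} $1$-$1$ monotone partial maps $f$ with $f(x)\neq x$ rather than families of disjoint pairs. This streamlines the reflection step (extend a putative uncountable monotone $f$ to a maximal one, and on a club of $\alpha$ the set $\alpha\times\omega$ is closed under it and its inverse with the restriction still maximal), and maximality is used directly in the sealing argument to manufacture a witnessing pair from $f_\alpha$ in the case where no element of $\mathrm{dom}(f_\alpha)\cup\mathrm{ran}(f_\alpha)$ lies above the node currently being handled.
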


The underlying general technique used to prove all four results is the diagonalization method. More precisely, the proofs of the first two theorems use a diagonalization argument \emph{à la} Sierpi\'{n}ski (e.g. \cite{zbMATH02598169, zbMATH03124831}). The proof of our third result uses a diagonalization argument similar to the construction (originally due to G\"{o}del \cite[p. 170]{MR2731169}) of a $\Pi_1^1$ set without the perfect set property. Finally, the diagonalization construction of our fourth result is similar to the construction due to Jensen of a Suslin line under $\diamondsuit$ \cite{MR756630, Devlin:1984aa}.

In \cref{sec:prel} we provide a more general definition of entangled linear order and we give a brief account of what is known on this topic.
\cref{sec:thm1,sec:thm2,sec:thm3,sec:thm4} are then devoted to the proofs of \cref{thm:1,thm:2,thm:3,thm:4}. We provide some open questions along the way.

\subsection*{Notation} For most of our paper the notation is standard---see e.g. \cite{MR1321597, MR1940513}. Given two tuples $s$ and $t$, we denote the concatenation of $s$ with $t$ by $s^\smallfrown t$. In Section~\ref{sec:thm3}, following set-theoretic \lnote*{}{practice}, we refer to elements of the Baire space ${}^\omega \omega$ as ``reals", and we employ some notation from \cite{MR2731169}.

\section{Preliminaries}\label{sec:prel} 

There exists a more general notion of entangled linear order than the one we gave in the introduction, one that extends to 
cardinalities larger than the continuum. This more general definition (\cref{def:entangled}) will be useful in our discussion.

We say that two tuples $x,y$ are \emph{disjoint} if they have no element in common---that is if $\mathrm{ran}(x) \cap \mathrm{ran}(y) = \emptyset$. 

Given a linear order $(L, \le)$ and two $n$-tuples $e$ and $e'$ of elements of $L$, we let $t(e,e') \in {}^n 2$ be the binary $n$-tuple defined as follows: \lnote*{}{for every $i < n$, the $i$-th element of $t(e,e')$ is $1$ if and only if $e'_i < e_i$. }

For each $t \in {}^{<\omega} 2$, we denote by $\neg t$ the tuple $\langle 1-t_i \mid i < \text{length}(t)\rangle$. Clearly, if the $n$-tuples $e$ and $e'$ are disjoint, then $t(e,e') = \neg t(e', e)$.  We say that $e$ and $e'$ \emph{realize} $s$ if either $t(e,e') = s$ or $t(e,e') = \neg s$.

Given a set $S$ of mutually disjoint $n$-tuples of elements of some linear order, and given a binary $n$-tuple $t$, we say that $S$ \emph{avoids} $t$ if no couple of distinct elements of $S$ realizes $t$. 
For each $n > 0$, we call the $n$-tuple $\langle 0\rangle^n =  \langle 0, 0, \dots, 0\rangle \in {}^n 2$ \emph{the increasing type}.

We point out a trivial yet useful observation.
The usual order $\leq$ on real numbers is closed but not open as a subset of $\mathbb{R}^2$ and it becomes clopen\footnote{That is, both open and closed.} on $\mathbb{R}^2$ minus the diagonal.
Therefore, for every $t \in {}^n 2$, the following (symmetric) binary relation $R_t$ on $\mathbb{R}^{2n}$ is open: $e \mathrel{R_t} e'$ if $e$ and $e'$ are disjoint and realize $t$.

\begin{definition}[\cite{avraham1981martin, MR799818}]\label{def:entangled} Given a linear order $(L, \le)$, an infinite cardinal $\kappa$ and a positive integer $n > 0$, we say that $L$ is \emph{$(\kappa, n)$-entangled} if $|L| \ge \kappa$ \lnote*{}{and} for every collection $F$ of size $\kappa$ of 1-1 mutually disjoint $n$-tuples of $L$ and for every binary $n$-tuple $t$, there are two distinct elements of $F$ that realize $t$. A linear order which is $(\kappa, n)$-entangled for every $n > 0$ is said to be \emph{$\kappa$-entangled}. If $\kappa$ is not mentioned, it is assumed that $\kappa = \aleph_1$. 
\end{definition}

We now state some known theorems regarding entangled linear orders. The first one is about some topological consequences of entangledness. In particular, it implies that every $3$-entangled linear order is isomorphic to a set of reals. This theorem is stated without proof in \cite{MR799818}. For a proof, we refer the reader to \cite{MR4080091}.

\begin{theorem}[\cite{MR799818}]
Every $2$-entangled linear order is ccc, and every $3$-entangled linear order is separable.
\end{theorem}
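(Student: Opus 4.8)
The plan is to prove both halves contrapositively: I will turn a failure of the countable chain condition (for the first half) and a failure of separability (for the second half) into an uncountable family of $1$-$1$, mutually disjoint tuples that \emph{avoids} a fixed type, contradicting $(\aleph_1,2)$-entangledness and $(\aleph_1,3)$-entangledness respectively. Throughout I use that for disjoint tuples $t(e',e)=\neg t(e,e')$, and that $e,e'$ realize $s$ exactly when $t(e,e')\in\{s,\neg s\}$.

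For the ccc statement, suppose $L$ is not ccc and fix an uncountable family $\{(a_\xi,b_\xi):\xi<\omega_1\}$ of pairwise disjoint nonempty open intervals (shrinking each nonempty disjoint open set to a basic interval). Since two disjoint intervals of a linear order are always comparable under the ``lies entirely below'' relation, the only possible coincidences among the endpoints are of the form $b_\xi=a_\eta$ for adjacent intervals, and each point can be the left endpoint of at most one, and the right endpoint of at most one, of these intervals. Hence the graph on $\omega_1$ joining $\xi,\eta$ when $(a_\xi,b_\xi)$ and $(a_\eta,b_\eta)$ share an endpoint has maximum degree at most $2$, so it has an uncountable independent set; passing to it, I may assume all the endpoints $a_\xi,b_\xi$ are pairwise distinct. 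Then the $2$-tuples $e_\xi=\langle a_\xi,b_\xi\rangle$ are $1$-$1$ and mutually disjoint, and whenever the $\xi$-interval lies below the $\eta$-interval one has $a_\xi<b_\xi<a_\eta<b_\eta$, so both coordinates are ordered the same way and $t(e_\xi,e_\eta)\in\{\langle0,0\rangle,\langle1,1\rangle\}$. Since this set is disjoint from $\{\langle0,1\rangle,\langle1,0\rangle\}$, no two of the $e_\xi$ realize $\langle0,1\rangle$; that is, the family avoids $\langle0,1\rangle$, contradicting $(\aleph_1,2)$-entangledness.

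For the separability statement, suppose the $3$-entangled order $L$ is not separable; by the first half $L$ is ccc. The construction rests on a lemma: for every countable $C\subseteq L$ there is an \emph{uncountable} open interval disjoint from $C$. Indeed, $L\setminus\overline{C}$ is open and disjoint from $C$, its convex components are open intervals (or rays) disjoint from $C$, and by ccc there are only countably many of them; if all were countable then $L\setminus\overline C$ would be countable, while $\overline C$ is separable (as $C$ is countable and dense in it), making $L$ a union of two separable subspaces and hence separable, a contradiction. Using this lemma I build by recursion on $\xi<\omega_1$ points $c_\xi$ and endpoints $a_\xi<c_\xi<b_\xi$: at stage $\xi$ apply the lemma to $C=\{c_\eta:\eta<\xi\}$ to get an uncountable interval $U$ disjoint from $C$, and choose $a_\xi,c_\xi,b_\xi\in U$ avoiding the countably many coordinates $\{a_\eta,c_\eta,b_\eta:\eta<\xi\}$ already used. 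Then $(a_\xi,b_\xi)\subseteq U$ is disjoint from $\{c_\eta:\eta<\xi\}$, all coordinates across the whole family are distinct, and so the increasing $3$-tuples $e_\xi=\langle a_\xi,c_\xi,b_\xi\rangle$ are $1$-$1$ and mutually disjoint.

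The key computation is that this left-separation forbids a single type. Fix $\eta<\xi$; by construction $c_\eta\notin(a_\xi,b_\xi)$. If $c_\eta<a_\xi$ then $a_\eta<c_\eta<a_\xi<c_\xi$, forcing $a_\eta<a_\xi$ and $c_\eta<c_\xi$, so $t(e_\xi,e_\eta)\in\{\langle1,1,0\rangle,\langle1,1,1\rangle\}$; if $c_\eta>b_\xi$ then $c_\xi<b_\xi<c_\eta<b_\eta$, forcing $c_\eta>c_\xi$ and $b_\eta>b_\xi$, so $t(e_\xi,e_\eta)\in\{\langle0,0,0\rangle,\langle1,0,0\rangle\}$. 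In all cases $t(e_\xi,e_\eta)$ lies in $S=\{\langle0,0,0\rangle,\langle1,0,0\rangle,\langle1,1,0\rangle,\langle1,1,1\rangle\}$, which is disjoint from $\{\langle0,1,0\rangle,\langle1,0,1\rangle\}$; hence no two of the $e_\xi$ realize $\langle0,1,0\rangle$, and the uncountable family avoids $\langle0,1,0\rangle$, contradicting $(\aleph_1,3)$-entangledness. The conceptual content is entirely in these two observations—that disjoint intervals can only be ordered the same way, and that a point lying outside a later triple's interval cannot produce the ``crossing'' type $\langle0,1,0\rangle$. I expect the only real care to be needed in the separability construction: establishing the uncountable-interval lemma (which is exactly where the previously proved ccc is used) and checking the routine degenerate cases in choosing fresh endpoints, so as to license forming honestly disjoint tuples.
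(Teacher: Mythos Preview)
The paper does not itself prove this theorem; it quotes the result from Todor\v{c}evi\'c and refers the reader to Krueger's paper for a proof, so there is no in-paper argument to compare against. Your contrapositive approach---turning a failure of ccc into a family avoiding $\langle 0,1\rangle$ and a failure of separability into a family avoiding $\langle 0,1,0\rangle$---is the standard one, and your argument is essentially correct.

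Two small slips are worth tightening. In the ccc half, from ``the $\xi$-interval lies entirely below the $\eta$-interval'' you cannot conclude $b_\xi<a_\eta$ in a non-dense order (in $\mathbb{Z}$ take $(0,2)=\{1\}$ and $(1,3)=\{2\}$: they are disjoint but $b_\xi=2>1=a_\eta$). What does follow, and is all you need, is $a_\xi<a_\eta$ and $b_\xi<b_\eta$: pick $c_\xi\in(a_\xi,b_\xi)$ and $c_\eta\in(a_\eta,b_\eta)$; disjointness together with $c_\xi<c_\eta$ forces $c_\xi\le a_\eta$ and $b_\xi\le c_\eta$, whence $a_\xi<c_\xi\le a_\eta$ and $b_\xi\le c_\eta<b_\eta$. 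In the separability lemma, ``a union of two separable subspaces is separable'' is false in general for linearly ordered spaces; what your situation actually gives is stronger and does suffice: if every convex component of $L\setminus\overline{C}$ is countable then $L\setminus\overline{C}$ itself is countable (by ccc there are only countably many components), and then $C\cup(L\setminus\overline{C})$ is a countable set which is easily checked to be dense in $L$ (any nonempty open $U$ either meets $L\setminus\overline C$, or is contained in $\overline C$ and hence meets $C$). With these two patches your proof goes through.
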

\begin{corollary}
Every $3$-entangled linear order is isomorphic to a suborder of $\mathbb{R}$.
\end{corollary}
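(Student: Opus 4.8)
The statement to prove is the Corollary: every $3$-entangled linear order is isomorphic to a suborder of $\mathbb{R}$.

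The plan is to derive this directly from the preceding Theorem, which asserts that every $3$-entangled linear order is separable. First I would recall the classical fact from Cantor's theory of linear orders (the order-theoretic characterization of subsets of $\mathbb{R}$): a linear order embeds into $\mathbb{R}$ if and only if it is separable in the order topology, equivalently if and only if it has a countable order-dense subset. Concretely, let $(L,\le)$ be $3$-entangled. By the Theorem, $L$ is separable, so fix a countable set $D \subseteq L$ that is order-dense in $L$, meaning that between any two elements $a < b$ of $L$ there is some $d \in D$ with $a \le d \le b$ (and we may arrange $D$ to also contain witnesses for any endpoints, or simply note that a countable separating family suffices).

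Next I would build the embedding. Enumerate $D = \{d_n : n \in \omega\}$ and define $f : L \to \mathbb{R}$ by $f(x) = \sum_{n : d_n < x} 2^{-n-1}$, or more robustly, map $L$ into $\mathbb{R}^D$ (with a suitable ordering) via characteristic functions of initial segments and observe that the image, with the lexicographic-type order inherited from $D$, is order-isomorphic to $L$ because $D$ is dense; then note this image embeds into $\mathbb{R}$. The key point is strict monotonicity: if $x < y$ in $L$, density of $D$ gives $d_n$ with $x \le d_n \le y$; a small argument (splitting into cases according to whether the inequalities are strict) produces an index $n$ lying in the defining sum for $f(y)$ but not for $f(x)$, hence $f(x) < f(y)$. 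This makes $f$ an order-embedding of $L$ into $\mathbb{R}$, so $L$ is isomorphic to the suborder $f[L] \subseteq \mathbb{R}$.

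I do not expect any real obstacle here: the content is entirely in the quoted Theorem (separability), and the remaining step is the standard fact that separable linear orders embed into the reals. The only mild care needed is handling the boundary cases in the monotonicity check — ensuring that when $x < y$ but $D$ only gives $d_n$ equal to $x$ or to $y$, the sum still distinguishes them — which is why one may prefer to enlarge $D$ slightly (e.g. throw in, for each pair from a countable dense set, a strictly-between point, using density again) so that strict separation always holds. In the write-up I would phrase it as: "$L$ is separable by the previous theorem, and every separable linear order embeds order-preservingly into $\mathbb{R}$ (a classical result going back to Cantor; see e.g. \cite{MR1940513}), whence the claim." That keeps the proof to essentially one sentence, which is appropriate for a corollary stated immediately after the theorem it follows from.

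\begin{proof}
By the previous theorem, every $3$-entangled linear order $L$ is separable, i.e.\ it admits a countable order-dense subset. It is a classical fact, essentially due to Cantor, that every separable linear order is order-isomorphic to a suborder of $\mathbb{R}$: if $D = \{d_n : n \in \omega\}$ is countable and order-dense in $L$ (which we may enlarge so that strict separation holds, using density to insert a point strictly between any two given ones), then $x \mapsto \sum_{n : d_n < x} 2^{-n-1}$ is a strictly increasing map from $L$ into $\mathbb{R}$. Hence $L$ is isomorphic to a suborder of $\mathbb{R}$.
\end{proof}
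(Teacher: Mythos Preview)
There is a genuine gap. The ``classical fact'' you invoke---that every separable linear order embeds into $\mathbb{R}$---is false as stated: the split interval $[0,1]\times\{0,1\}$ with the lexicographic order is separable in its order topology (the points with rational first coordinate are topologically dense) yet has uncountably many gaps $((x,0),(x,1))$ and therefore admits no order-embedding into $\mathbb{R}$. Separability in the order topology does \emph{not} hand you a countable $D$ with the property that for every $a<b$ there is $d\in D$ with $a\le d\le b$: at a gap $(a,b)$ such a $d$ would have to equal $a$ or $b$, so uncountably many gaps obstruct any countable $D$. Your proposed fix of ``enlarging $D$ by inserting a point strictly between'' fails at a gap for the obvious reason that nothing lies strictly between.

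What is missing is precisely the ingredient the paper supplies before invoking the Dedekind-completion argument: a $2$-entangled (hence any $3$-entangled) linear order has only countably many gaps. Indeed, an uncountable family of gaps $\{(a_\xi,b_\xi):\xi<\omega_1\}$ would yield the uncountable, $1$--$1$, increasing map $a_\xi\mapsto b_\xi$, contradicting $2$-entangledness. Once you know the gaps are countable, you may add all their endpoints to your countable dense set $D$, and then your sum map $x\mapsto\sum_{d_n<x}2^{-n-1}$ really is strictly increasing.
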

\begin{proof}
Every $2$-entangled linear order has countably many gaps\footnote{Given a linear order $L$, a \emph{gap} of $L$ is a pair $(a,b) \in L$ with $a < b$ such that there is no $c \in L$ with $a < c < b$.}: otherwise, letting $\{ (a_\xi, b_\xi) \mid \xi < \omega_1\}$ be an uncountable family of gaps, the map $a_\xi \mapsto b_\xi$ would be uncountable, 1-1 and increasing, against $2$-entangledness of the underlying linear order. 

Therefore, it follows from the classical Dedekind completion argument that every $3$-entangled linear order, being separable and having countably many gaps, is isomorphic to a suborder of $\mathbb{R}$.
\end{proof}

The next theorem is an example of the numerous applications of entangled linear orders as studied by Shelah, Todor{\v c}evi{\'c} and others (see e.g. \cite{MR799818, MR776283, MR1467577, MR1812182, chapital2025nentangledsetn1entangledsets}).

\begin{theorem}[\cite{MR799818}]
If there is an entangled linear order, then there are two ccc partial
orders whose product is not ccc.
\end{theorem}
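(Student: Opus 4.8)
The plan is to realise the two partial orders as posets of finite approximations to an increasing and to a decreasing function between two halves of an entangled set of reals, using $n$-entangledness for all $n$ to verify that each factor is ccc.

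By the Corollary above an entangled linear order embeds into $\mathbb{R}$, and passing to an uncountable subset (which preserves $(\aleph_1,n)$-entangledness for every $n$) I may fix an entangled $E \subseteq \mathbb{R}$ with $|E| = \aleph_1$. Using a two-sided condensation point of $E$, partition $E$ into two uncountable pieces $A$ and $B$ with every element of $A$ lying to the left of every element of $B$, and fix a bijection $g \colon A \to B$. Let $P$ (resp.\ $Q$) be the poset of finite sets $F \subseteq A$ on which $g$ is increasing (resp.\ decreasing), both ordered by reverse inclusion. Then $P \times Q$ is not ccc: $\{(\{a\},\{a\}) : a \in A\}$ is an uncountable antichain, because a common refinement of $(\{a\},\{a\})$ and $(\{a'\},\{a'\})$ with $a \ne a'$ would force $g$ to be at once increasing and decreasing on $\{a,a'\}$.

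To see that $P$ is ccc (the case of $Q$ being symmetric), I would split $P$ into countably many pieces via a \emph{rational skeleton}: for $k \in \omega$ and rationals $q_0 < \dots < q_k$ and $r_0 < \dots < r_k$, let $P_{k,\bar q,\bar r}$ be the set of $F = \{a_1 < \dots < a_k\} \in P$ with $a_i \in (q_{i-1},q_i)$ and $g(a_i) \in (r_{i-1},r_i)$ for all $i$; every condition lies in some piece. Within one piece the only obstruction to compatibility is local: if $F, F'$ both lie in $P_{k,\bar q,\bar r}$, then whenever $a \in F$ lies in slot $i$ and $a' \in F'$ in slot $j \ne i$, the skeleton already forces $a < a' \Leftrightarrow g(a) < g(a')$, so $F \cup F' \in P$ precisely when $g$ does not invert the pair $\{a_i^F, a_i^{F'}\}$ for any slot $i$ (in particular, each piece is $n$-linked for every $n$). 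Now let $\mathcal{A} \subseteq P$ be uncountable; some piece contains an uncountable $\mathcal{A}' \subseteq \mathcal{A}$, and thinning $\mathcal{A}'$ finitely many times I may assume that for each slot $i$ the map $F \mapsto a_i^F$ is either constant or injective on $\mathcal{A}'$. Let $I$ be the nonempty set of slots on which it is injective, so that any inversion between two members of $\mathcal{A}'$ occurs at a slot in $I$, and for $F \in \mathcal{A}'$ let $v_F$ be the $2|I|$-tuple obtained by listing $\langle a_i^F : i \in I\rangle$ in increasing order followed by $\langle g(a_i^F) : i \in I\rangle$ in increasing order. Since $A$ lies to the left of $B$ these tuples are increasing, and they are distinct, $1$-$1$ and mutually disjoint $2|I|$-tuples of $E$. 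Applying $(\aleph_1, 2|I|)$-entangledness with the increasing type $\langle 0 \rangle^{2|I|}$, we obtain $F \ne F'$ in $\mathcal{A}'$ with $v_F$ and $v_{F'}$ comparable coordinatewise; this forces $a_i^F$ and $a_i^{F'}$ to lie in the same order as $g(a_i^F)$ and $g(a_i^{F'})$ for every $i \in I$, so no slot is inverted, $F \cup F' \in P$, and $\mathcal{A}'$ is not an antichain. For $Q$ one argues identically, except that the skeleton records that $g$ reverses order and one applies $(\aleph_1, 2|I|)$-entangledness to the type consisting of $|I|$ zeros followed by $|I|$ ones, which makes $g$ decreasing on $F \cup F'$.

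The main obstacle is precisely this chain-condition verification. The poset of finite sets on which $g$ is increasing is not obviously ccc, because two increasing finite sets can fail to amalgamate on account of an inversion; the rational skeleton is what confines such inversions to pairs of slot-representatives, and one application of entangledness, to the tuple assembled from all slot-representatives at once, is what rules them all out simultaneously. It matters here that one has $n$-entangledness for every $n$, not merely for $n = 2$, and that the realised type is chosen to match the monotonicity pattern of the poset. This is, in essence, the argument of \cite{MR799818}.
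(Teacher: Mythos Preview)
The paper does not actually supply a proof of this theorem; it is merely quoted, with attribution to Todor\v{c}evi\'{c} \cite{MR799818}, as an illustration of applications of entangled linear orders. There is therefore no in-paper proof to compare against.

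That said, your argument is the standard one and is correct. A few small points: the phrase ``two-sided condensation point'' is stronger than what you use; all you need is some $p$ with $E\cap(-\infty,p)$ and $E\cap(p,\infty)$ both uncountable, and in fact even the separation $A<B$ is inessential---$A\cap B=\emptyset$ already suffices to make the $2|I|$-tuples $1$-$1$. The thinning step (``constant or injective on each slot'') deserves one more sentence: process the slots one at a time, passing to an uncountable fibre if one exists, and otherwise using that all fibres are countable to extract an uncountable injective subfamily by transfinite recursion. With these cosmetic adjustments the verification that the $v_F$ are $1$-$1$ and pairwise disjoint, and the single invocation of $(\aleph_1,2|I|)$-entangledness with the appropriate type, go through exactly as you wrote.
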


The next results are mainly concerned with the existence or nonexistence of entangled linear orders:

%
%
%
%
%
%

\begin{itemize}
\itemsep0.4em
\item There exists a $\mathrm{cf}(2^{\aleph_0})$-entangled linear order \cite{MR799818, MR776283}.
\item Adding a single Cohen real or random real adds an entangled linear order \cite[p. 55]{MR980949}.
\item It follows from the Open Graph Axiom that there are no $2$-entangled sets of reals \cite[Proposition 8.4]{MR980949}.
\item It follows from $\mathsf{MA}(\aleph_1)$ that there are no entangled linear orders \cite{avraham1981martin}.
\item It is consistent, relative \lnote*{}{to} $\mathsf{ZF}$, that $\mathsf{MA}(\aleph_1)$ holds and that, for every $n > 0$, there is an $n$-entangled set of reals \cite{MR801036}.
\end{itemize}

Next, we note that it directly follows from the Erd\H{o}s-Dushnik-Miller theorem \cite{MR4862} (see also \cite[Theorem 9.7]{MR1940513} for a proof) that the definition of $(\kappa, n)$-entangled linear order is equivalent to the seemingly stronger version in which we do not only require the existence of just two distinct elements of $F$ to realize the given type $t$, but we ask for the existence of an \emph{infinite} subset of $F$ whose elements, mutually, realize $t$:
\begin{theorem}[Erd\H{o}s-Dushnik-Miller]
Any infinite graph $G = (V, E)$ not containing an independent set of size $|V|$ contains an infinite complete subgraph.
\end{theorem}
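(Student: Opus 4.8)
The plan is to prove the contrapositive: if $G=(V,E)$ is an infinite graph with no infinite complete subgraph, then $G$ contains an independent set of size $\kappa:=|V|$ (write $N(v)=\{w\in V:\{v,w\}\in E\}$ for neighbourhoods). I would proceed by transfinite induction on $\kappa$, treating the cases ``$\kappa$ regular'' and ``$\kappa$ singular'' separately; the regular case is self-contained, while the singular case invokes the inductive hypothesis at smaller cardinalities.

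For $\kappa$ regular (this also covers the base case $\kappa=\aleph_0$, where the statement is essentially Ramsey's theorem) I would run the following shrinking process. Put $T_0:=V$, and at step $n$, provided the process has not yet stopped, choose (by Zorn's lemma) a maximal-under-inclusion independent subset $I_n$ of the induced subgraph $G[T_n]$. If $|I_n|=\kappa$, stop and output $I_n$, which is an independent set of $G$ of size $\kappa$. Otherwise $|I_n|<\kappa$, and by maximality $T_n=I_n\cup\bigcup_{u\in I_n}(N(u)\cap T_n)$; since $\kappa$ is regular and $|I_n|<\kappa$, some $u_n\in I_n$ must satisfy $|N(u_n)\cap T_n|=\kappa$, so we may set $T_{n+1}:=N(u_n)\cap T_n$ (again of size $\kappa$) and continue. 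Each $u_n$ lies in $T_n=\bigcap_{k<n}N(u_k)$, so the $u_n$ are pairwise adjacent; hence if the process never stops, $\{u_n:n<\omega\}$ is an infinite complete subgraph, contrary to hypothesis. Therefore it stops, yielding the desired independent set.

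For $\kappa$ singular, set $\mu:=\mathrm{cf}(\kappa)$ and fix a strictly increasing sequence $\langle\kappa_i:i<\mu\rangle$ of regular cardinals cofinal in $\kappa$ with $\kappa_0>\mu$ and $\sum_{j<i}\kappa_j<\kappa_i$ for all $i<\mu$; partition $V=\bigsqcup_{i<\mu}V_i$ with $|V_i|=\kappa_i$. The inductive hypothesis, applied to each $G[V_i]$, yields independent sets $A_i\subseteq V_i$ with $|A_i|=\kappa_i$. The target is to select subsets $A_i'\subseteq A_i$, still of size $\kappa_i$, so that $\bigcup_{i<\mu}A_i'$ is independent: then this union is an independent set of size $\sup_{i<\mu}\kappa_i=\kappa$. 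Since each $A_i$ is already independent, the only edges to kill are those between distinct blocks, and here a pigeonhole argument is available: for $i<j$, using that $\kappa_j$ is regular and $|A_i|=\kappa_i<\kappa_j$, either some $a\in A_i$ has $|N(a)\cap A_j|=\kappa_j$, or all but fewer than $\kappa_j$ vertices of $A_j$ have no neighbour in $A_i$, so that a subset of $A_j$ of size $\kappa_j$ can be separated from $A_i$.

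The main obstacle is to organize these separations coherently and to rule out the first alternative. Roughly, one passes to an auxiliary graph on $\mu$ recording which pairs of blocks can be separated, uses the regular case at $\mu$ to find a cofinal set $J\subseteq\mu$ of pairwise separable blocks, and then prunes each $A_j$ ($j\in J$) by removing, for every $i\in J\cap j$, the fewer-than-$\kappa_j$ vertices of $A_j$ having a neighbour in $A_i$ — a total removal of size $<\kappa_j$, since $|J\cap j|<\mu<\kappa_j$ and $\kappa_j$ is regular — leaving $A_j'$ of size $\kappa_j$ with $\bigcup_{j\in J}A_j'$ independent. The delicate point, which adversarial cross-block configurations show is the real content of the singular case, is proving that the auxiliary graph has no infinite complete subgraph — equivalently, converting a would-be infinite clique there into one in $G$ by a coherent choice of vertices across the blocks — and I expect this to need a further stabilization argument together with a careful choice of the auxiliary graph. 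For the application in the present paper only the regular case $\kappa=\aleph_1$ is used, and the full statement is in any case classical.
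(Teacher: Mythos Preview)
The paper does not give its own proof of this theorem: it is quoted as a classical result with a citation to the original paper and to Jech's textbook for a proof, and is then used only at $\kappa=\aleph_1$ in the subsequent proposition. So there is nothing in the paper to compare your argument against beyond noting that the authors treat the result as background.

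Your proof of the regular case is correct and is essentially the standard argument (and in particular covers $\kappa=\aleph_1$, which is all the paper needs). Your treatment of the singular case, however, is not a proof but an outline with an explicitly acknowledged gap: you define an auxiliary graph on $\mu=\mathrm{cf}(\kappa)$ and propose to apply the regular case to it, but, as you yourself say, the crux is showing that this auxiliary graph has no infinite clique, and you do not carry this out. The difficulty is real: one has to argue that an infinite clique in the auxiliary graph would let you thread an honest infinite clique through $G$, and this requires more than the pigeonhole dichotomy you state. If you want a complete proof, either fill in that step (the classical route goes through a careful choice of the blocks together with a diagonal selection of vertices, using the inductive hypothesis also inside the neighbourhoods $N(a)\cap A_j$), or simply state the result for regular $\kappa$ and cite the literature for the general case---which is exactly what the paper does.
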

\begin{proposition}
Let $L$ be a $(\kappa, n)$-entangled linear order. Then, for every collection $F$ of size $\kappa$ of 1-1 mutually disjoint $n$-tuples of $L$ and for every binary $n$-tuple $t$, there exists an infinite $H\subseteq F$ such that for all distinct $e,e' \in H$, $e$ and $e'$ realize $t$.
\end{proposition}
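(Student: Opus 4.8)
The plan is to derive this directly from the Erdős–Dushnik–Miller theorem as stated, by setting up the right auxiliary graph. Fix a $(\kappa,n)$-entangled linear order $L$, a collection $F$ of size $\kappa$ of $1$-$1$ mutually disjoint $n$-tuples, and a binary $n$-tuple $t$. Define a graph $G = (F, E)$ by putting an edge between two distinct $e, e' \in F$ exactly when $e$ and $e'$ realize $t$. An independent set in $G$ is then precisely a subset $F' \subseteq F$ none of whose pairs realize $t$; equivalently, $F'$ \emph{avoids} $t$ in the terminology introduced above. We want an infinite \emph{complete} subgraph of $G$, which is exactly an infinite $H \subseteq F$ all of whose pairs realize $t$. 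So by Erdős–Dushnik–Miller it suffices to check that $G$ has no independent set of size $|F| = \kappa$.

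First I would verify this last point: suppose toward a contradiction that $F' \subseteq F$ is independent in $G$ and $|F'| = \kappa$. Then $F'$ is a collection of size $\kappa$ of $1$-$1$ mutually disjoint $n$-tuples of $L$ that avoids $t$, i.e.\ no two distinct elements of $F'$ realize $t$. But $(\kappa,n)$-entangledness of $L$, applied to $F'$ and $t$, yields two distinct elements of $F'$ that realize $t$ — a contradiction. Hence every independent set in $G$ has size $< \kappa = |F|$, so the hypothesis of Erdős–Dushnik–Miller is met, and we obtain an infinite complete subgraph $H \subseteq F$; by construction, for all distinct $e, e' \in H$, the pair $e, e'$ realizes $t$, which is what we wanted.

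There is essentially no obstacle here: the only minor point of care is matching the quantifiers in the definition of ``realize'' (recall $e$ and $e'$ realize $t$ if $t(e,e') = t$ or $t(e,e') = \neg t$, and since the tuples in $F$ are mutually disjoint this relation is symmetric in $e, e'$, so it genuinely defines an undirected graph). One should also note that $G$ is infinite as a graph since $|F| = \kappa \ge \aleph_1$, so that Erdős–Dushnik–Miller applies. Everything else is a direct unwinding of definitions.

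Finally, I would remark — though it is not strictly needed for the statement — that the equivalence goes the other way trivially: an $L$ satisfying the strengthened conclusion is a fortiori $(\kappa,n)$-entangled, since an infinite $H$ realizing $t$ in particular contains two distinct elements realizing $t$. Thus the two formulations of entangledness coincide, as claimed in the text preceding the proposition.
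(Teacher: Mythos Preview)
Your proof is correct and essentially identical to the paper's: both define the graph on $F$ whose edges record the relation ``realizes $t$'', observe that entangledness rules out independent sets of size $\kappa$, and invoke Erd\H{o}s--Dushnik--Miller to extract an infinite clique. (One tiny slip: you write $\kappa \ge \aleph_1$, but the definition only requires $\kappa$ to be an infinite cardinal; this does not affect the argument since $|F| = \kappa$ is infinite regardless.)
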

\begin{proof}
Let us fix a $(\kappa, n)$-entangled linear order $L$, a collection $F$ of size $\kappa$ of 1-1 mutually disjoint $n$-tuples of $L$ and a binary $n$-tuple $t$.

Consider the binary relation $R_t$ on $F$ defined by: $e \mathrel{R_t} e'$ if $e$ and $e'$ realize $t$. By entangledness, the graph $(F, R_t)$ does not contain an independent set of size $|F| = \kappa$. Therefore, by the Erd\H{o}s-Dushnik-Miller Theorem, the graph $(F, R_t)$ contains an infinite complete subgraph. In other words, there exists an infinite $H\subseteq F$ such that for all distinct $e,e' \in H$, $e$ and $e'$ realize $t$.
\end{proof}

Finally, let us provide a proof of the following proposition from \cite[\S 1]{MR799818} that provides a criterion to show that a set $E\subseteq \mathbb{R}$ is $(2^{\aleph_0}, n)$-entangled.

\begin{proposition}\label{prop:criterionforentangledness}
Let $E\subseteq \mathbb{R}$ be a set of cardinality $2^{\aleph_0}$. Assume that there exists an enumeration $\langle r_\alpha \mid \alpha < 2^{\aleph_0}\rangle$ of $E$ such that, for every continuous function $f$ from a $G_\delta$ subset of $\mathbb{R}^{n-1}$ to $\mathbb{R}$,
there exists an $\alpha < 2^{\aleph_0}$ with
\begin{equation}\label{eq:criterionforentangledness}
f\big[\{r_\gamma \mid \gamma < \beta\}^{n-1}\big] \cap E \subseteq \{r_\gamma \mid \gamma < \beta\},
\end{equation}
for all $\beta \ge \alpha$. Then $E$ is $(2^{\aleph_0},n)$-entangled.
\end{proposition}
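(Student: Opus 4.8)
The plan is to argue by contradiction. Suppose $E$ satisfies the hypothesis with enumeration $\langle r_\alpha \mid \alpha < 2^{\aleph_0}\rangle$, but $E$ is not $(2^{\aleph_0}, n)$-entangled. Then there is a collection $F$ of $2^{\aleph_0}$-many $1$-$1$ mutually disjoint $n$-tuples from $E$ and a binary $n$-tuple $t$ such that no two distinct elements of $F$ realize $t$. The key idea is that "avoiding $t$" is a strong structural constraint: fixing the first $n-1$ coordinates of a tuple in $F$ essentially determines the last coordinate via a function, and this function (or rather a piece of it assembled from the members of $F$) should extend to a continuous function on a $G_\delta$ set, which we can then feed into the hypothesis to derive a contradiction with equation \eqref{eq:criterionforentangledness}.

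First I would set up the reduction carefully. Write each tuple $e \in F$ as $e = \langle e_0, \dots, e_{n-1}\rangle$. After thinning $F$ to a subfamily of the same cardinality, I may assume all tuples have the same "increasing pattern" among their coordinates (which permutation sorts them), so WLOG each $e$ is increasing. By further standard thinning (a $\Delta$-system–style argument on the rational intervals separating the coordinates), I can arrange that the ranges of the first $n-1$ coordinates are confined to disjoint intervals, so the map $\pi \colon e \mapsto \langle e_0, \dots, e_{n-2}\rangle$ is injective on $F$ and the set $D \coloneqq \{\pi(e) \mid e \in F\} \subseteq \mathbb{R}^{n-1}$ has size $2^{\aleph_0}$. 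Define $g \colon D \to \mathbb{R}$ by $g(\pi(e)) = e_{n-1}$. The avoidance of $t$ should translate, after possibly replacing $t$ by $\neg t$ and permuting, into a monotonicity/order-preservation property of $g$ that forces $g$ to be continuous on $D$ and hence, by a classical extension theorem for monotone (or more generally, for partial continuous) functions, to extend to a continuous function $f$ defined on a $G_\delta$ superset of $D$ in $\mathbb{R}^{n-1}$. This is the technical heart, and it is the step I expect to be the main obstacle: one must check that avoiding a single pattern $t$ genuinely yields enough regularity — essentially that for each coordinate $i < n-1$ the dependence of $e_{n-1}$ on $e_i$ is monotone in the direction dictated by $t_{n-1}$ versus $t_i$ — and that such a partial monotone-in-each-variable function extends continuously; some care with the case analysis on $t$ and with the possibility that $g$ is only "coordinatewise monotone" rather than globally monotone will be needed.

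Once $f$ is obtained, I apply the hypothesis to $f$: there is $\alpha < 2^{\aleph_0}$ such that $f\big[\{r_\gamma \mid \gamma < \beta\}^{n-1}\big] \cap E \subseteq \{r_\gamma \mid \gamma < \beta\}$ for all $\beta \ge \alpha$. Now pick any $e \in F$ with $\pi(e) \in \{r_\gamma \mid \gamma < \beta\}^{n-1}$ for a large $\beta$ but with $e_{n-1} = r_\delta$ for some $\delta \ge \beta$ — such $e$ exists because $F$ has size $2^{\aleph_0}$ and the enumeration indices of the last coordinates are unbounded, whereas only $<2^{\aleph_0}$ of the $e$'s can have all their first $n-1$ coordinates among an initial segment; more precisely, by a closing-off argument one finds $\beta \ge \alpha$ with $\pi(e) \in \{r_\gamma\mid\gamma<\beta\}^{n-1}$ yet $e_{n-1} \notin \{r_\gamma \mid \gamma < \beta\}$. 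Since $f(\pi(e)) = g(\pi(e)) = e_{n-1} \in E$, this contradicts \eqref{eq:criterionforentangledness}. This contradiction shows $E$ is $(2^{\aleph_0}, n)$-entangled, completing the proof.

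(Remark: for $n = 1$ the statement degenerates — $G_\delta$ subsets of $\mathbb{R}^0$ and constant functions $f$ — and reduces to the trivial fact that any set of reals of size $\aleph_1$ or more is $(2^{\aleph_0},1)$-entangled, so we may assume $n \ge 2$ throughout.)
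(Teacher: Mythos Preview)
Your plan has a genuine gap at precisely the step you flag as the ``technical heart'': for $n \ge 3$, avoiding a single type $t$ does \emph{not} force the function $g$ to be continuous, nor even coordinatewise monotone. The only constraint you obtain is that whenever $\pi(e)$ and $\pi(e')$ are related in the specific way prescribed by $t \upharpoonright (n-1)$, then $e_{n-1}$ and $e'_{n-1}$ are related as dictated by $t_{n-1}$; if $\pi(e)$ and $\pi(e')$ are incomparable in that sense, there is no constraint whatsoever. Concretely, for $n = 3$ and $t = \langle 0,0,0\rangle$, any family of the shape $F = \{(x, -x, h(x)) : x \in A\}$ (shifted into disjoint intervals if you wish) avoids $t$ vacuously, since no two first pairs are comparable in the product order, yet $g(x,-x) = h(x)$ can be chosen as wildly discontinuous as you like. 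So there is no hope of extracting a continuous extension $f$ directly from the avoidance of $t$, and no classical extension theorem rescues this. (Your argument does go through for $n = 2$, where avoidance of $t$ genuinely makes $g$ monotone in one real variable, hence continuous off a countable set.)

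The paper's proof proceeds differently: it argues by induction on $n$, so that $E$ is already known to be $(2^{\aleph_0}, n-1)$-entangled. Given $F$, one reorders each tuple to be increasing with respect to the fixed \emph{enumeration} of $E$ (not the real order) and views $F$ as a function from a subset of $E^{n-1}$ to $E$. The argument then dichotomizes on the number of discontinuity points of this function. If there are fewer than $2^{\aleph_0}$, one restricts to the continuous part and applies Kuratowski's extension theorem to obtain a continuous $f$ on a $G_\delta$ set, yielding the contradiction with \eqref{eq:criterionforentangledness} exactly as in your final paragraph. If instead there are $2^{\aleph_0}$ many discontinuity points, \cref{lemma:finalarg} (which combines the inductive $(2^{\aleph_0}, n-1)$-entangledness with a compactness argument on the closure of the graph) shows that \emph{every} type in ${}^{n}2$ is realized by a pair from $F$. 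The key point is that continuity is never derived from the avoidance of a type; it arises from one branch of a dichotomy, and the discontinuous branch is handled by the induction hypothesis via \cref{lemma:finalarg}.
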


\begin{lemma}\label{lemma:finalarg}
Let $E$ be a $(2^{\aleph_0}, n)$-entangled set of reals, for some $n > 0$, and let $g:F \rightarrow E$ be a function with $F$ being a subset of $E^n$ of cardinality $2^{\aleph_0}$ whose elements are 1-1 and pairwise disjoint. If $g$ has $2^{\aleph_0}$-many discontinuity points, then any $t \in {}^{n+1}2$ is realized by two distinct $x,y \in \mathrm{graph}(g)$.
\end{lemma}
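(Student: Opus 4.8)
The plan is to fix a target $t \in {}^{n+1} 2$, write it as $t = s^\smallfrown\langle\varepsilon\rangle$ with $s \in {}^n 2$ and $\varepsilon \in 2$, and produce two distinct members of $\mathrm{graph}(g)$ realizing it; the first $n$ coordinates will be handled by $(2^{\aleph_0},n)$-entangledness of $E$ and the last one by the discontinuity hypothesis. Throughout I write $\bar x \coloneqq x^\smallfrown\langle g(x)\rangle$ for $x \in F$. First I isolate a well-behaved subfamily. Let $D \subseteq F$ be the set of discontinuity points of $g$, so $|D| = 2^{\aleph_0}$. Since $D$ is the union over $m \ge 1$ of the sets $\{x \in D : \mathrm{osc}_g(x) \ge 1/m\}$, where $\mathrm{osc}_g(x)$ denotes the oscillation of $g$ at $x$, and $2^{\aleph_0}$ has uncountable cofinality, I fix $m$ so that the $m$-th set has size $2^{\aleph_0}$ and put $\delta \coloneqq 1/(8m)$; then, at each such point $x$, every neighborhood of $x$ meets $F$ in a point $w$ with $|g(w) - g(x)| > \delta$ (by definition of oscillation and the triangle inequality). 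Splitting according to whether, arbitrarily close to $x$, such $w$ can be found with $g(w) \ge g(x) + \delta$ or with $g(w) \le g(x) - \delta$, and then partitioning $\mathbb{R}$ into intervals of length $\delta$ to thin out once more, I obtain (after relabelling) $D' \subseteq F$ with $|D'| = 2^{\aleph_0}$ such that: (i) for every $x \in D'$ and every $\eta > 0$ there is $w \in F$ with $\|w - x\| < \eta$ and $g(w) \ge g(x) + \delta$; and (ii) $|g(x) - g(y)| < \delta$ for all $x, y \in D'$. (If instead the downward alternative of (i) holds, the rest of the argument is entirely symmetric.)

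Next, since $D' \subseteq E^n$ is a family of $2^{\aleph_0}$-many $1$-$1$, pairwise disjoint $n$-tuples of $E$, $(2^{\aleph_0},n)$-entangledness of $E$ yields distinct $a, b \in D'$ realizing $s$; as $a$ and $b$ are disjoint, $t(b,a) = \neg t(a,b)$, so after possibly swapping them we may assume $t(a,b) = s$. Now comes the perturbation step. Put $\rho \coloneqq \min_{i<n}|a_i - b_i| > 0$. If $\varepsilon = 1$, apply (i) at $a$ with $\eta = \rho$ to get $w \in F$ with $\|w - a\| < \rho$ and $g(w) \ge g(a) + \delta$; then each $w_i$ is closer to $a_i$ than $b_i$ is, hence lies on the same side of $b_i$ as $a_i$, so $w \ne b$ and $t(w,b) = t(a,b) = s$, while $g(w) \ge g(a) + \delta > g(b)$ by (ii). Hence $t(\bar w, \bar b) = s^\smallfrown\langle 1\rangle = t$ and $\bar w \ne \bar b$, so $\bar w$ and $\bar b$ realize $t$. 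If $\varepsilon = 0$, the symmetric move --- apply (i) at $b$, getting $w \in F$ close to $b$ with $g(w) \ge g(b) + \delta > g(a)$ --- gives $t(a,w) = s$ and $t(\bar a, \bar w) = s^\smallfrown\langle 0\rangle = t$. In the downward case mentioned above one perturbs the other of $a$ and $b$; all four combinations of $\varepsilon$ and of the sign go through in the same way.

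The crux --- essentially the only point beyond routine bookkeeping --- is the thinning producing (ii): confining $g$ to a window shorter than its guaranteed oscillation $\delta$ on a still-large subfamily, so that a single nearby jump of size $\ge \delta$ already overshoots the competing value. This is what lets the last coordinate be steered in either direction using only $n$-entangledness of $E$, rather than the $(n+1)$-entangledness one might expect to need; correspondingly, the argument never uses injectivity of $g$, so coincidences $g(x) = g(y)$ create no difficulty (indeed, if two $s$-related $n$-tuples in $F$ have the same $g$-value, the corresponding pair in $\mathrm{graph}(g)$ already realizes both $s^\smallfrown\langle 0\rangle$ and $s^\smallfrown\langle 1\rangle$). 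The only mild technical care needed elsewhere is checking that the three successive thinnings --- by oscillation level, by direction of the jump, and by the interval containing $g(x)$ --- each keep a subfamily of full size $2^{\aleph_0}$, which is immediate since $2^{\aleph_0}$ is not a countable union of smaller cardinals.
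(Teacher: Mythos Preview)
Your proof is correct and follows the same three-step strategy as the paper's: thin the set of discontinuity points to a subfamily of size $2^{\aleph_0}$ with uniform jump behaviour, apply $(2^{\aleph_0},n)$-entangledness of $E$ to that subfamily to get two $n$-tuples realizing $s = t \upharpoonright n$, and then perturb to force the last coordinate. The implementations differ, however. The paper first embeds $E$ into $[0,1]$, takes the closure $G \subseteq [0,1]^{n+1}$ of $\mathrm{graph}(g)$, and uses compactness to identify discontinuity points as those $u$ with $|G_u| > 1$; it then finds a single rational $q$ that splits $G_u$ for $2^{\aleph_0}$-many $u$, and in the final step perturbs \emph{both} chosen tuples, one to land below $q$ and the other above. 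Your route replaces the compactness argument by direct oscillation bounds and three countable thinnings (oscillation level, jump direction, $\delta$-window for $g(x)$), after which you perturb only \emph{one} of the two tuples and use your window condition (ii) to compare the jumped value against the unperturbed one. Your version is a bit more elementary in that it never leaves $F$ or invokes compactness; the paper's is shorter once the closure $G$ is in hand, since a single separating rational $q$ does the work of your conditions (i) and (ii) simultaneously.
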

\begin{proof}
Let us assume without loss of generality that $E\subseteq [0,1]$, and let $G \subseteq [0,1]^{n+1}$ be the closure of the graph of the map $g$.

It follows from the compactness of the unit interval that for every $u \in F$, $u$ is a continuity point of $g$ if and only if $|G_u| = 1$ or, equivalently, $G_u = \{g(u)\}$, where $G_u \coloneqq \{y \in [0,1] \mid u^\smallfrown y \in G\}$. We conclude from our hypothesis that there are $2^{\aleph_0}$-many $u \in F$ with $|G_u| > 1$. Therefore, there must be a rational $q \in \mathbb{Q}$ and a subset $D\subseteq F$ of cardinality $2^{\aleph_0}$ such that, for every $u \in D$, there are $a,b \in G_u$ with $a < q < b$. 

Fix some $t \in {}^{n+1} 2$ and assume without loss of generality that $t(n) = 0$. By \lnote*{}{entangledness}, there are $u,v \in D$ such that $t(u, v) = t \upharpoonright n$. Then, by definition of $D$, we can pick $x,y \in F$ sufficiently close to $u$ and $v$, respectively, such that $g(x) < q < g(y)$ and $t(x, y) = t(u, v) = t \upharpoonright n$. Therefore, $x^\smallfrown g(x)$ and $y^\smallfrown g(y)$ realize $t$.
\end{proof}

\begin{proof}[Proof of \cref{prop:criterionforentangledness}]
We prove the proposition by induction on $n > 0$. The base case $n = 1$  follows from the fact that $E$ has cardinality $2^{\aleph_0}$. Suppose that $n > 1$ and fix an enumeration $\langle r_\alpha \mid \alpha < 2^{\aleph_0}\rangle$ of $E$ that satisfies the hypothesis of the proposition. By the induction hypothesis, $E$ is $(2^{\aleph_0}, n-1)$-entangled.

Fix a collection $F$ of $2^{\aleph_0}$-many 1-1 pairwise disjoint $n$-tuples of elements of $E$.
By passing to a subset of $F$, we can assume without loss of generality that all the tuples of $F$ are increasing with respect to the fixed enumeration of $E$.

Suppose, towards a contradiction, that $F$, seen as a function from a subset of $E^{n-1}$ to $E$, has less than $2^{\aleph_0}$-many discontinuity points. Then, passing to a subset if necessary, we can suppose that $F$ is continuous. By Kuratowski's extension theorem \cite[Theorem 3.8]{MR1321597}, there exists a continuous map $\tilde{f}$ from a $G_\delta$ subset of $\mathbb{R}^{n-1}$ to $\mathbb{R}$ that extends $F$. Since $F$ has cardinality $2^{\aleph_0}$, and its elements are increasing with respect to the fixed enumeration of $E$, we conclude that for cofinally many $\beta < 2^{\aleph_0}$, the set $\tilde{f}[\{r_\gamma \mid \gamma < \beta\}^{n-1}] \cap E $ is not included in $\{r_\gamma \mid \gamma < \beta\}$, against \eqref{eq:criterionforentangledness}. Hence the contradiction.

Therefore, $F$ has $2^{\aleph_0}$-discontinuity points. But then, by the $(2^{\aleph_0}, n-1)$-entangledness of $E$ and \cref{lemma:finalarg}, we conclude that every $t \in {}^{n} 2$ is realized by a pair of elements of $F$. Thus, $E$ is $(2^{\aleph_0}, n)$-entangled.
\end{proof}

\section{Separating $n$-entangledness}\label{sec:thm1}

\cref{thm:1} is a direct consequence of the following theorem. 
Recall that $\mathrm{cov}(\mathcal{B})$ is the least cardinal $\kappa$ such that a perfect Polish space (it does not matter which) can be expressed as the union of $\kappa$-many meager sets. 

\begin{theorem}\label{thm:1s}
Assume $\mathrm{cov}(\mathcal{B}) = 2^{\aleph_0}$. Then, for every $n > 1$, there exists a $(2^{\aleph_0}, n)$-entangled set of reals which is not $(2^{\aleph_0}, n+1)$-entangled.
\end{theorem}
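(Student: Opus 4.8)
The plan is to construct $E$ as the range of a recursively defined sequence $\langle r_\alpha \mid \alpha < 2^{\aleph_0}\rangle$ so that, on the one hand, this enumeration witnesses the hypothesis of \cref{prop:criterionforentangledness} (making $E$ $(2^{\aleph_0},n)$-entangled) and, on the other, $E$ contains $2^{\aleph_0}$-many pairwise disjoint, $1$-$1$ $(n+1)$-tuples $\vec a = \langle a_0,\dots,a_n\rangle$ with $a_0+\dots+a_n = 0$. These tuples form a family $F$ which avoids the increasing type $\langle 0\rangle^{n+1}$: for distinct $\vec a,\vec b \in F$ one cannot have $a_i<b_i$ for every $i\le n$ (that would give $0 = \sum_i a_i < \sum_i b_i = 0$), nor $a_i>b_i$ for every $i$, so $t(\vec a,\vec b)\notin\{\langle 0\rangle^{n+1}, \langle 1\rangle^{n+1}\}$. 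Since $|E| = 2^{\aleph_0}$, \cref{def:entangled} then yields that $E$ is not $(2^{\aleph_0},n+1)$-entangled.

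\emph{Set-up.} Fix an enumeration $\langle f_\alpha \mid \alpha < 2^{\aleph_0}\rangle$ of all continuous functions from a $G_\delta$ subset of $\mathbb{R}^{n-1}$ into $\mathbb{R}$; there are exactly $2^{\aleph_0}$ of them. For $c\in\mathbb{R}$ put $\varphi_c(s_1,\dots,s_{n-1}) := -c - (s_1+\dots+s_{n-1})$, and let $\beta(c)$ be the index with $f_{\beta(c)} = \varphi_c$. Split $2^{\aleph_0}$ into consecutive blocks $I_\xi = \{\delta^\xi_0 < \dots < \delta^\xi_n\}$, $\xi < 2^{\aleph_0}$, of length $n+1$. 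At block $I_\xi$ I will choose a tuple $\vec a^\xi = \langle a^\xi_0,\dots,a^\xi_n\rangle$ with $\sum_i a^\xi_i = 0$ and entries fresh and pairwise distinct, set $r_{\delta^\xi_m} := a^\xi_m$ for $m\le n$, and put $\vec a^\xi$ into $F$. Freshness guarantees $|E| = 2^{\aleph_0}$ and that $F$ is as in the previous paragraph, so the whole difficulty is to choose the $\vec a^\xi$ while keeping the criterion alive.

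\emph{The choice at block $I_\xi$.} Write $\delta := \delta^\xi_n$. Since $a^\xi_n$ is added last and $a^\xi_n = -(a^\xi_0+\dots+a^\xi_{n-1}) = \varphi_{a^\xi_k}(\langle a^\xi_i \mid i\ne n,k\rangle)$ for every $k<n$, the inclusion $r_\delta \in f_{\beta(a^\xi_k)}[\{r_\gamma\mid\gamma<\delta\}^{n-1}]$ is forced and cannot be prevented; I make it harmless by demanding $\beta(a^\xi_k) > \delta$ for every $k<n$ (possible since $\{c : \beta(c)\le\delta\}$ has size $<2^{\aleph_0}$), so the only function whose criterion this inclusion can disturb has index $>\delta$. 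Apart from these forced inclusions, I want the block to produce no inclusion $r_{\delta^\xi_m}\in f_\alpha[\{r_\gamma\mid\gamma<\delta^\xi_m\}^{n-1}]$ with $\alpha\le\delta$. Viewing $(a^\xi_0,\dots,a^\xi_{n-1})$ as ranging over $\mathbb{R}^n$ (with $a^\xi_n := -\sum_{i<n} a^\xi_i$), each such requirement ``$a^\xi_m \ne f_\alpha(\vec u)$'' — where $\vec u$ is an $(n-1)$-tuple with entries among $\{r_\gamma\mid\gamma<\delta^\xi_0\}\cup\{a^\xi_0,\dots,a^\xi_{m-1}\}$, so that $\vec u$ never mentions $a^\xi_n,\dots,a^\xi_{n-1}$ — forbids, in these coordinates, (a graph of $a^\xi_m$ over some of $a^\xi_0,\dots,a^\xi_{m-1}$) times the remaining coordinates, hence a nowhere-dense set; for $m=n$ the requirement $-\sum_{i<n}a^\xi_i \ne f_\alpha(\vec u)$ likewise forbids a nowhere-dense set, after solving for a coordinate of $\mathbb{R}^n$ not occurring in $\vec u$ (there is one, as $\vec u$ has only $n-1$ slots). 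Together with the $<2^{\aleph_0}$ further nowhere-dense constraints from freshness, distinctness, and $\beta(a^\xi_k)>\delta$, the set of forbidden $(a^\xi_0,\dots,a^\xi_{n-1})$ is a union of fewer than $2^{\aleph_0}$ meager subsets of $\mathbb{R}^n$; by $\mathrm{cov}(\mathcal B) = 2^{\aleph_0}$ it is not all of $\mathbb{R}^n$, so I pick $\vec a^\xi$ outside it.

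\emph{Conclusion and obstacle.} To verify the criterion of \cref{prop:criterionforentangledness}, fix $f_\alpha$ and suppose some $\delta'\ge\alpha$ satisfies $r_{\delta'}\in f_\alpha[\{r_\gamma\mid\gamma<\delta'\}^{n-1}]$; say $\delta'\in I_\xi$. If $\delta' = \delta^\xi_m$ with $m<n$, or $\delta'=\delta^\xi_n$, the construction rules this out unless $f_\alpha = \varphi_{a^\xi_k}$ for some $k<n$, in which case $\alpha = \beta(a^\xi_k) > \delta^\xi_n \ge \delta' \ge \alpha$, absurd. Hence no such $\delta'$ exists, so $E$ is $(2^{\aleph_0},n)$-entangled; combined with the first paragraph this proves \cref{thm:1s}, and \cref{thm:1} follows since $\mathsf{CH}$ implies $\mathrm{cov}(\mathcal B)=2^{\aleph_0}$. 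The crux — and essentially the only delicate point — is the bookkeeping of the third paragraph: one must see both that the inevitable ``reverse'' inclusion produced by the last entry of each $\vec a^\xi$ can always be pushed onto a function not yet enumerated, and that every remaining constraint genuinely cuts out a nowhere-dense set, so that $\mathrm{cov}(\mathcal B)=2^{\aleph_0}$ can actually be invoked.
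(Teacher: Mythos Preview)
Your argument is correct and takes a genuinely different route from the paper's.

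The paper first constructs (\cref{lemma:1}) a countable template set $D$ of $(n+1)$-tuples that avoids the increasing type while enjoying a density-with-witnesses property; it then builds each $r_\alpha$ as an $(n+1)$-tuple which, together with $D$ and the previous $r_\beta$'s, continues to avoid the increasing type, by first choosing the initial $n$ coordinates outside a meager set and then placing the last coordinate in an interval $[l_\alpha(s),u_\alpha(s)]$. The delicate step is \cref{claim:thm:1}, which uses the special combinatorics of $D$ to show this last choice can always be made. You replace the whole template apparatus by the single linear constraint $a_0+\dots+a_n=0$: this gives a one-line reason why the family $F$ avoids the increasing type, and collapses the inductive step to choosing a point of $\mathbb{R}^n$ outside fewer than $2^{\aleph_0}$ nowhere-dense sets---no auxiliary $D$, no interval $[l_\alpha,u_\alpha]$, no \cref{claim:thm:1}. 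Your handling of the inevitable identity $a^\xi_n=\varphi_{a^\xi_k}(a^\xi_0,\dots,\widehat{a^\xi_k},\dots,a^\xi_{n-1})$ by pushing the index $\beta(a^\xi_k)$ past $\delta^\xi_n$ is the right idea; in fact it is automatic once you avoid the bad sets for the functions $\varphi_c$ with $\beta(c)\le\delta$, since for those the bad set at $m=n$ with $\vec u=(a^\xi_i)_{i\ne k}$ is precisely the hyperplane $\{a^\xi_k=c\}$, so your extra bookkeeping is redundant but harmless.

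Your approach is shorter and more transparent for this particular statement; the paper's template machinery would adapt more readily if one wanted to avoid a type other than the increasing one, but for \cref{thm:1s} as written the linear trick is cleaner. One small slip: ``$\vec u$ never mentions $a^\xi_n,\dots,a^\xi_{n-1}$'' should presumably read ``$a^\xi_m,\dots,a^\xi_n$''.
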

\begin{lemma}\label{lemma:1}
For every $n > 1$, there exists a countable set $D \subseteq \mathbb{R}^{n+1}$ of 1-1 mutually disjoint $(n+1)$-tuples of reals such that:
\begin{enumerate}[label={\upshape (\arabic*)}]
\itemsep0.3em
\item $D$ avoids the increasing type.
\item For every nonempty open $U \subseteq \mathbb{R}^n$, for every $i< n$, there are two disjoint $e,e' \in D$ such that:\vspace{0.3em}
\begin{enumerate}[label={\upshape (2\alph*)}]
\itemsep0.3em
\item $t(e,e') = \langle 0 \rangle^i{}^\smallfrown \langle 1 \rangle{}^\smallfrown \langle 0 \rangle^{n-i}$.
\item $e \upharpoonright n$ and $e' \upharpoonright n$ belong to $U$.
\item There are two disjoint $w,w' \in U$ that realize the increasing type and such that $w_j = e_j$ and $w'_j = e'_j$ for all $j < n$ with $j \neq i$.
\end{enumerate}
\end{enumerate}
\end{lemma}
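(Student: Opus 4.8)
The plan is to build $D$ as the union of an increasing $\omega$-chain $D_0\subseteq D_1\subseteq\cdots$ of finite sets, keeping at every stage the invariant that $D_m$ consists of $1$-$1$ mutually disjoint $(n+1)$-tuples and avoids the increasing type, and adding two new tuples at each stage in order to handle one instance of clause~(2). For the reduction: every nonempty open $U\subseteq\mathbb{R}^n$ contains a box $B=\prod_{j<n}(a_j,b_j)$ with rational endpoints, and in the construction the tuples $e\upharpoonright n$, $e'\upharpoonright n$, $w$, $w'$ appearing in (2a)--(2c) will all lie in $B$; hence it suffices to secure, for each rational box $B$ and each $i<n$, a pair $e,e'\in D$ witnessing (2a)--(2c) with $B$ in place of $U$. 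So fix an enumeration $\langle(B_m,i_m)\mid m<\omega\rangle$ of all such pairs, set $D_0=\emptyset$, and at stage $m$ put $D_{m+1}=D_m\cup\{e,e'\}$ for a pair $e,e'$ constructed as follows.

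At stage $m$ write $(B,i)=(B_m,i_m)$ and $B=\prod_{j<n}(a_j,b_j)$. First I would pick a base point $p=(p_0,\dots,p_{n-1})\in B$ with pairwise distinct coordinates, all distinct from every coordinate occurring in a tuple of $D_m$ (possible, since $B$ is a nonempty open box and only finitely many values are forbidden). Then, for small $\delta_j>0$ $(j<n)$, set $e\upharpoonright n:=p$ and let $e'\upharpoonright n$ be the point obtained from $p$ by subtracting $\delta_i$ from coordinate $i$ and adding $\delta_j$ to each other coordinate $j<n$; the $\delta_j$ are taken small enough that $e'\upharpoonright n\in B$ and that $e'_j<d_j\iff p_j<d_j$ for all $d\in D_m$ and $j<n$. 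Finally let $e_n:=s$ and $e'_n:=s'$ for a pair $s<s'$ chosen in the next paragraph. Since then $e_i>e'_i$, $e_j<e'_j$ for $j\neq i$, and $e_n<e'_n$, we get $t(e,e')=\langle0\rangle^i{}^\smallfrown\langle1\rangle{}^\smallfrown\langle0\rangle^{n-i}$, so (2a) holds; (2b) is immediate; and (2c) is witnessed by $w:=e\upharpoonright n$ and $w':=e'\upharpoonright n$ with their $i$-th coordinates replaced by a generic pair $w_i<w'_i$ in $(a_i,b_i)$, which makes $t(w,w')=\langle0\rangle^n$ while keeping $w,w'$ disjoint and inside $B$.

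It remains to choose $s<s'$ so that $D_{m+1}$ still avoids the increasing type, i.e.\ so that $t(d,e),t(d,e')\notin\{\langle0\rangle^{n+1},\langle1\rangle^{n+1}\}$ for every $d\in D_m$. Partition $D_m$ according to $p$ into $P_{\mathrm{lo}}=\{d: p_j>d_j\text{ for all }j<n\}$, $P_{\mathrm{hi}}=\{d: p_j<d_j\text{ for all }j<n\}$, and the remaining set $P_{\mathrm{mix}}$. For $d\in P_{\mathrm{mix}}$ the tuple $t(d,e)\upharpoonright n$ --- equivalently $t(d,e')\upharpoonright n$, by the choice of the $\delta_j$ --- already contains both a $0$ and a $1$, so $t(d,e),t(d,e')$ are non-monochromatic regardless of $s,s'$. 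For $d\in P_{\mathrm{lo}}$ one has $t(d,e)\upharpoonright n=\langle0\rangle^n$, so we need $s,s'<d_n$; for $d\in P_{\mathrm{hi}}$ one has $t(d,e)\upharpoonright n=\langle1\rangle^n$, so we need $s,s'>d_n$. These requirements are jointly satisfiable exactly because of the invariant: if $d\in P_{\mathrm{hi}}$ and $d'\in P_{\mathrm{lo}}$ then $d'_j<p_j<d_j$ for all $j<n$, whence $t(d,d')\upharpoonright n=\langle1\rangle^n$; since $D_m$ avoids the increasing type, $t(d,d')\neq\langle1\rangle^{n+1}$, forcing $d_n<d'_n$. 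Thus $\max\{d_n:d\in P_{\mathrm{hi}}\}<\min\{d_n:d\in P_{\mathrm{lo}}\}$ (with the obvious convention if a set is empty), and any $s<s'$ strictly between these bounds and distinct from all coordinates of $D_m$ and from $p_0,\dots,p_{n-1}$ works. A routine check then confirms that $D_{m+1}$ satisfies the invariant and that the pair $e,e'$ witnesses (2a)--(2c) for $(B,i)$. Finally $D:=\bigcup_m D_m$ is countable, has property~(1), and for property~(2) one passes, given $U$ and $i$, to a rational box $B\subseteq U$ and the stage treating $(B,i)$.

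The step I expect to be the main obstacle is the choice of $s,s'$: a freshly added tuple has only its last coordinate free, the first $n$ being pinned inside $B$, so forbidding \emph{both} monochromatic patterns against every already-placed tuple looks over-constrained at first. The resolution is the observation that the only already-placed tuples $d$ whose comparison with the new tuple is not already ``mixed'' on the first $n$ coordinates are those lying entirely below or entirely above $B$, and for those the invariant --- that $D_m$ avoids the increasing type --- already orders their last coordinates in the way that leaves room for an admissible value of $s$ (and of $s'$).
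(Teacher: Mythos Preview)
Your proof is correct and follows essentially the same approach as the paper's: both build $D$ inductively by adding two tuples per stage to handle one $(U,i)$, and your partition $P_{\mathrm{lo}}/P_{\mathrm{hi}}/P_{\mathrm{mix}}$ together with the key inequality $\max\{d_n:d\in P_{\mathrm{hi}}\}<\min\{d_n:d\in P_{\mathrm{lo}}\}$ is exactly the paper's Claim that $l_k(s)<u_k(s)$ in different notation. The only cosmetic difference is that the paper packages these bounds as functions $l_k,u_k$ (which it reuses in the proof of the main theorem) and picks two abstract close-enough points $p,q$, whereas you realize $e'\upharpoonright n$ as an explicit $\delta$-perturbation of $p$; the content is the same.
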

\begin{proof}
We define by induction a sequence $\langle (r_k^0, r_k^1) \mid k \in\omega\rangle$ of couples of $(n+1)$-tuples of reals such that the set $D = \{r_k^i \mid k\in\omega \text{ and } i \in 2\}$ satisfies the desired properties.

Fix an enumeration $\langle U_k \mid k\in\omega\rangle$ of a countable basis of $\mathbb{R}^n$. Fix also a surjective map $k \mapsto \langle (k)_0, (k)_1\rangle$ from $\omega$ to $\omega\times n$.

Now we deal with the inductive construction of the sequence. Suppose we have defined $(r_m^0, r_m^1)$ for every $m < k$, towards defining $(r_k^0, r_k^1)$, with the induction hypothesis being that $D_k \coloneqq \{r_m^i \mid m < k \text{ and } i \in 2\}$ is a set of 1-1 mutually disjoint $(n+1)$-tuples that avoids the increasing type. 

For each $n$-tuple $s$ of reals, let
\begin{align*}
u_k(s) &\coloneqq \min \{x \in \mathbb{R} \mid \exists w \in D_k \ (t(s, w \upharpoonright n) = \langle 1 \rangle^n \text{ and } w_n = x)\}\\
l_k(s) &\coloneqq \max \{x \in \mathbb{R} \mid \exists w \in D_k \ (t(s, w \upharpoonright n) = \langle 0 \rangle^n \text{ and } w_n = x)\}
\end{align*}
where we use the convention that $\min \emptyset = +\infty$ and $\max \emptyset = -\infty$.
\begin{claim}\label{claim:lemma:1}
$l_k(s) < u_k(s)$ for every $s$.
\end{claim}
\begin{proof}
If there is no $w \in D_k$ such that $t(s, w \upharpoonright n) = \langle 1\rangle^n$ the claim trivially follows, and similarly if there is no $w' \in D_k$ such that $t(s, w' \upharpoonright n) = \langle 0\rangle^n$. So we can assume that such $w$ and $w'$ exist and fix them. Clearly, $w \neq w'$ and $t(w \upharpoonright n, w' \upharpoonright n) = \langle 0\rangle^n$. By the induction hypothesis, $w$ and $w'$ do not realize the increasing type. Thus, we must have $w'_n < w_n$. It directly follows from the definition of $u$ and $l$ that $l_k(s) < u_k(s)$.
\end{proof}

Since $D_k$ is finite, we can pick two 1-1 disjoint tuples $p, q \in U_{(k)_0}$ such that:\vspace{0.3em}
\begin{enumerate}[label=(\alph*)]
\itemsep0.3em
\item $(\mathrm{ran}(p) \cup \mathrm{ran}(q)) \cap \bigcup_{w \in D_k} \mathrm{ran}(w) = \emptyset$.
\item $t(p,q) = \langle 0 \rangle^{(k)_1}{}^\smallfrown \langle 1 \rangle{}^\smallfrown \langle 0 \rangle^{n-1-(k)_1}$.
\item $l_k(p) = l_k(q)$ and $u_k(p) = u_k(q)$.
\item Letting $\tilde{p},\tilde{q} \in \mathbb{R}^n$ be such that $\tilde{p}_j = p_j$ and $\tilde{q}_j = q_j$ for all $j \neq (k)_1$ and $\tilde{p}_{(k)_1} = q_{(k)_1}$ and $\tilde{q}_{(k)_1} = p_{(k)_1}$, we have $\tilde{p},\tilde{q} \in U_{(k)_0}$.\vspace{0.3em}
\end{enumerate}
Indeed, by the finitude of $D_k$, we can always find a nonempty open set $V\subseteq U_{(k)_0}$ such that $\mathrm{ran}(s) \cap \bigcup_{w \in D_k} \mathrm{ran}(w) = \emptyset$ for all $s \in V$; furthermore, to satisfy (c) and (d) we just need to choose, in $U_{(k)_0}$, $p$ and $q$ so that they are close enough (e.g. with respect to the Euclidean metric on $\mathbb{R}^n$); thus, any two 1-1, disjoint, close enough $p$ and $q$ realizing $\langle 0 \rangle^{(k)_1}{}^\smallfrown \langle 1 \rangle{}^\smallfrown \langle 0 \rangle^{n-1-(k)_1}$ taken inside $V$ will satisfy (a)-(d).

Again by the finitude of $D_k$, we can pick $x,y \in (l_k(p), u_k(p))$ with $x < y$ such that $x,y \not\in \bigcup_{w \in D_k} \mathrm{ran}(w)\cup \mathrm{ran}(p) \cup \mathrm{ran}(q)$. Set $r_k^0 = p^\smallfrown \langle x\rangle$ and $r_k^1 = q^\smallfrown \langle y\rangle$. Clearly, $r_k^0$ and $r_k^1$ realize $\langle 0 \rangle^{(k)_1}{}^\smallfrown \langle 1 \rangle{}^\smallfrown \langle 0 \rangle^{n-(k)_1}$.

It follows from the choices of $p,q,x,y$ that $D_{k+1} = D_k \cup \{r_k^0, r_k^1\}$ is still a set of 1-1 mutually disjoint $(n+1)$-tuples that avoids the increasing type,
and (d) guarantees that $\tilde{p}$ and $\tilde{q}$ are in $U_{(k)_0}$ and realize the increasing type, thus ensuring $(2c)$.
This finishes the inductive construction. It is straightforward to check that the set $D$ as defined satisfies the desired properties.
\end{proof}

\begin{proof}[Proof of \cref{thm:1s}]
Fix a set $D$ of $(n+1)$-tuples that satisfies the statement of \cref{lemma:1}. Fix also an enumeration $\langle f_\alpha \mid \alpha < 2^{\aleph_0}\rangle$ of the continuous functions from $G_\delta$ subsets of $\mathbb{R}^{n-1}$ to $\mathbb{R}$. We can assume that $f_0(s) = s_0$ for every $s \in  \mathbb{R}^{n-1}$.

We define inductively a sequence $\langle r_\alpha \mid \alpha < 2^{\aleph_0} \rangle$ of 1-1 mutually disjoint $(n+1)$-tuples of reals that avoids the increasing type, but such that the set $ E \coloneqq \bigcup_{\alpha < 2^{\aleph_0}} \mathrm{ran}(r_\alpha)$ is $(2^{\aleph_0}, n)$-entangled. In particular, $E$ is a $(2^{\aleph_0}, n)$-entangled set of reals which is not $(2^{\aleph_0}, n+1)$-entangled.

Suppose we have defined $r_\beta$ for every $\beta < \alpha$ towards defining $r_\alpha$, with the induction hypothesis being that the set
\[
P_\alpha \coloneqq D \cup \{r_\beta \mid \beta < \alpha\}
\]
is a set of 1-1 mutually disjoint $(n+1)$-tuples that avoids the increasing type (for the base case $\alpha = 0$, this is guaranteed by \cref{lemma:1}). Let
\[
Q_\alpha \coloneqq \bigcup_{w \in P_{\alpha}} \mathrm{ran}(w),
\]
and
\begin{multline*}
A_\alpha \coloneqq \big\{s \in \mathbb{R}^n \mid \exists i < n \ \exists \beta \le \alpha \ \exists u \in (Q_\alpha \cup \mathrm{ran}(s \upharpoonright i))^{n-1} \\ \text{ such that }s_i = f_\beta(u)\big\}.
\end{multline*}
Furthermore, for each $s \in \mathbb{R}^n$ let 
\begin{align*}
u_\alpha(s) &\coloneqq \inf \{x \in \mathbb{R} \mid \exists w \in P_\alpha \ (t(s, w \upharpoonright n) = \langle 1 \rangle^n \text{ and } w_n = x)\},\\[0.4em]
l_\alpha(s) &\coloneqq \sup \{x \in \mathbb{R} \mid \exists w \in P_\alpha \ (t(s, w \upharpoonright n) = \langle 0 \rangle^n \text{ and } w_n = x)\}.
\end{align*}
By an argument analogous to the one used in \cref{claim:lemma:1}, we can show that $l_\alpha(s) \le u_\alpha(s)$ for every $s \in \mathbb{R}^n$.  Next, we prove the following two claims, the second of which is key.
\begin{claim}\label{claim:thm:1-1}
$A_\alpha$ is the union of ${<}2^{\aleph_0}$-many nowhere dense sets.
\end{claim}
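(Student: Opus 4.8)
The plan is to decompose $A_\alpha$ into a small union of sets, each of which is a ``slice'' corresponding to a fixed choice of the data $(i, \beta, u)$ appearing in the definition of $A_\alpha$, and to show each such slice is nowhere dense. First I would count: since $\alpha < 2^{\aleph_0}$ and $\operatorname{cf}(2^{\aleph_0}) = \omega_1$ under $\mathsf{CH}$ (or more precisely $2^{\aleph_0} = \aleph_1$), the index set for $\beta$ has size ${<}2^{\aleph_0}$; the index $i < n$ ranges over finitely many values; and the set $Q_\alpha$ has size $|P_\alpha| \cdot (n+1) < 2^{\aleph_0}$, so for a fixed $s \upharpoonright i$ the parameter $u$ would range over $(Q_\alpha \cup \operatorname{ran}(s\upharpoonright i))^{n-1}$, which is a problem because $s \upharpoonright i$ varies with $s$. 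To handle this, rewrite the condition: $s \in A_\alpha$ iff there is $i < n$, $\beta \le \alpha$, and a way of choosing each coordinate of $u \in (Q_\alpha \cup \{s_0, \dots, s_{i-1}\})^{n-1}$ — i.e. a function $\sigma : (n-1) \to Q_\alpha \cup \{0, \dots, i-1\}$ specifying, for each coordinate of $u$, either a fixed element of $Q_\alpha$ or an index ${<}i$ into $s$ — such that $s_i = f_\beta(u^\sigma_s)$, where $u^\sigma_s$ is the tuple obtained from $\sigma$ and $s$. The number of such triples $(i, \beta, \sigma)$ is $n \cdot (\alpha+1) \cdot (|Q_\alpha| + n)^{n-1} < 2^{\aleph_0}$.

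Next I would show that for each fixed $(i, \beta, \sigma)$ the corresponding set
\[
A_\alpha^{i,\beta,\sigma} = \{ s \in \mathbb{R}^n \mid u^\sigma_s \in \operatorname{dom}(f_\beta) \text{ and } s_i = f_\beta(u^\sigma_s) \}
\]
is nowhere dense in $\mathbb{R}^n$. This is the heart of the argument. The point is that $u^\sigma_s$ depends only on the coordinates $s_0, \dots, s_{i-1}$ of $s$, so the map $s \mapsto f_\beta(u^\sigma_s)$ is a continuous function (on a $G_\delta$ set) of the first $i$ coordinates alone, and $A_\alpha^{i,\beta,\sigma}$ is the graph of this function over the variable $s_i$, with the remaining coordinates $s_{i+1}, \dots, s_{n-1}$ free. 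Concretely, $A_\alpha^{i,\beta,\sigma}$ is contained in a set of the form $\{ s : s_i = g(s_0,\dots,s_{i-1}) \}$ for a partial continuous $g$; such a set is closed in (a relatively open subset of) $\mathbb{R}^n$ and has empty interior, since fixing $s_0, \dots, s_{i-1}$ pins $s_i$ down to a single value while $s_i$ is free to move — so the set meets no nonempty open box. Hence it is nowhere dense. (One should be slightly careful that $g$ is only defined on a $G_\delta$, so $A_\alpha^{i,\beta,\sigma}$ is closed in an open subset of $\mathbb{R}^n$; but the closure of a nowhere dense set is nowhere dense, so this causes no trouble.)

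Finally, assembling: $A_\alpha = \bigcup_{(i,\beta,\sigma)} A_\alpha^{i,\beta,\sigma}$, a union of ${<}2^{\aleph_0}$-many nowhere dense sets, which is exactly the claim. The main obstacle I anticipate is purely bookkeeping: correctly formalizing the ``$u$ may use coordinates of $s$ below index $i$'' clause as a finite choice of a function $\sigma$, so that the count stays below $2^{\aleph_0}$ and so that each resulting slice genuinely depends on only finitely many coordinates of $s$ in the right way; once that reformulation is in place, the nowhere-density of each slice is immediate from the graph-of-a-function observation. (The hypothesis $\operatorname{cov}(\mathcal{B}) = 2^{\aleph_0}$ is not needed for this claim — only $2^{\aleph_0} < \operatorname{cf}(\text{something})$-style cardinal arithmetic giving that a union of ${<}2^{\aleph_0}$ pieces of size $<2^{\aleph_0}$ stays small, which here follows since $2^{\aleph_0} = \aleph_1$ — it will be used later, presumably via the Baire category theorem / the definition of $\operatorname{cov}(\mathcal{B})$, to find a point of $\mathbb{R}^n$ outside $A_\alpha$.)
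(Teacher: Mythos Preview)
Your proposal is correct and follows essentially the same route as the paper: your parametrizing function $\sigma : (n-1) \to Q_\alpha \cup \{0,\dots,i-1\}$ is exactly the paper's choice of $c \in (Q_\alpha \cup i)^{n-1}$, your $u^\sigma_s$ is the paper's $g(s,c)$, and both arguments reduce nowhere-density to the observation that $f_\beta(g(s,c))$ does not depend on the coordinate $s_i$. One small correction to your side remarks: the counting $n \cdot (|\alpha|+1) \cdot (|Q_\alpha|+n)^{n-1} < 2^{\aleph_0}$ needs no hypothesis at all (each factor is automatically ${<}2^{\aleph_0}$ since $\alpha < 2^{\aleph_0}$), so your invocation of $\mathsf{CH}$ there is unnecessary and in fact out of place, as the ambient theorem only assumes $\mathrm{cov}(\mathcal{B}) = 2^{\aleph_0}$.
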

\begin{proof}
Consider the following map:
\begin{align*}
g:\mathbb{R}^{n} \times (Q_\alpha \cup n)^{n-1} &\longrightarrow \mathbb{R}^{n-1}\\
(s, c) &\longmapsto w\text{, where } w_i \coloneqq \begin{cases}
c_i &\text{ if } c_i \in Q_\alpha,\\
s_j &\text{ if } c_i=j < n.
\end{cases}
\end{align*}
Clearly, for any fixed $c \in (Q_\alpha \cup n)^{n-1}$, the map $g({\cdot}, c) : \mathbb{R}^n \rightarrow \mathbb{R}^{n-1}$ is continuous. It suffices to show that given any fixed $i <n, \beta < \alpha$ and $c \in (Q_\alpha \cup i)^{n-1}$, the set of all those $s \in \mathbb{R}^n$ such that $s_i = f_\beta \circ g(s, c)$ is nowhere dense. But this is a direct consequence of the continuity of $f_\beta$ and of $g({\cdot}, c)$. 
\end{proof}

\begin{claim}\label{claim:thm:1}
There exists an $s \in \mathbb{R}^n \setminus A_\alpha$ and an $x \in \mathbb{R}$ such that
\begin{enumerate}[label={\upshape (\arabic*)}]
\itemsep0.3em
\item $l_\alpha(s) \le x \le u_\alpha(s)$,
\item For all $u \in (Q_\alpha \cup \mathrm{ran}(s))^{n-1}$, for all $\beta \le \alpha$, $f_\beta(u) \neq x$.
\end{enumerate}
\end{claim}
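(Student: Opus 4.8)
The plan is to locate the required $s$ outside a set that is \emph{small}, i.e.\ a union of fewer than $2^{\aleph_0}$ nowhere dense subsets of $\mathbb{R}^n$. Under $\mathrm{cov}(\mathcal{B}) = 2^{\aleph_0}$ a small set cannot cover a nonempty open subset of $\mathbb{R}^n$ (such a set is a perfect Polish space, so covering it takes $2^{\aleph_0}$ meager sets), and since $|\alpha|, |P_\alpha|, |Q_\alpha| < 2^{\aleph_0}$ all unions occurring below have $< 2^{\aleph_0}$ terms and stay small. First I would record a finiteness fact coming from the scaffold: since $D \subseteq P_\alpha$ and, by \cref{lemma:1}(2), $\{w \upharpoonright n : w \in D\}$ is dense in $\mathbb{R}^n$, for every $s \in \mathbb{R}^n$ there is a $w \in D$ with $w \upharpoonright n$ in the box $\prod_{i<n}(s_i, +\infty)$, hence ``above'' $s$, and a $w \in D$ with $w \upharpoonright n \in \prod_{i<n}(-\infty, s_i)$, hence strictly below $s$; therefore $-\infty < l_\alpha(s) \le u_\alpha(s) < +\infty$ for \emph{all} $s$, the middle inequality being the analogue of \cref{claim:lemma:1} already noted.

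Write $B(s) := \{f_\beta(u) : \beta \le \alpha,\ u \in (Q_\alpha \cup \mathrm{ran}(s))^{n-1}\}$ for the set that condition~(2) asks $x$ to avoid, so that $|B(s)| < 2^{\aleph_0}$ for every $s$. Now I would split into cases. If $s \notin A_\alpha$ and $l_\alpha(s) < u_\alpha(s)$, then $[l_\alpha(s), u_\alpha(s)]$ is a nondegenerate closed interval, of cardinality $2^{\aleph_0} > |B(s)|$, so any $x \in [l_\alpha(s), u_\alpha(s)] \setminus B(s)$ witnesses (1) and (2). If instead $l_\alpha(s) = u_\alpha(s) =: y$ (a real, by the previous paragraph), then $x := y$ witnesses (1) and (2) as long as $y \notin B(s)$. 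Hence it suffices to prove that $\mathcal{T} := \{s : l_\alpha(s) = u_\alpha(s) \text{ and } l_\alpha(s) \in B(s)\}$ is small: then $A_\alpha \cup \mathcal{T}$ is small by \cref{claim:thm:1-1}, so some $s \in \mathbb{R}^n \setminus (A_\alpha \cup \mathcal{T})$ exists, and the dichotomy above supplies $x$.

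To bound $\mathcal{T}$, I would reuse the map $(s, c) \mapsto g(s, c)$ from the proof of \cref{claim:thm:1-1}: it is continuous in $s$ for fixed $c$, and $\{g(s, c) : c \in (Q_\alpha \cup n)^{n-1}\} = (Q_\alpha \cup \mathrm{ran}(s))^{n-1}$, whence
\[
\mathcal{T}\ \subseteq\ \bigcup_{\beta \le \alpha}\ \bigcup_{c \in (Q_\alpha \cup n)^{n-1}}\ \bigl\{\, s : l_\alpha(s) = u_\alpha(s) = f_\beta\bigl(g(s, c)\bigr) \,\bigr\},
\]
a union of $< 2^{\aleph_0}$ sets. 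So it is enough to show that for every continuous partial map $\varphi$ on $\mathbb{R}^n$ the set $Z_\varphi := \{s : l_\alpha(s) = u_\alpha(s) = \varphi(s)\}$ is nowhere dense, and for this I would prove the stronger assertion that $\{s : l_\alpha(s) < u_\alpha(s)\}$ contains a dense open subset of $\mathbb{R}^n$, equivalently that $\{l_\alpha = u_\alpha\}$ is nowhere dense. The strategy here: given a nonempty open box $U \subseteq \mathbb{R}^n$, invoke \cref{lemma:1}(2) to obtain a pair $e, e' \in D$ of the ``mixed'' type (2a) with $e \upharpoonright n, e' \upharpoonright n \in U$, together with the coordinatewise-increasing pair $w, w' \in U$ from (2c) agreeing with $e, e'$ off one coordinate; these force $l_\alpha$ downward and $u_\alpha$ upward in complementary ways, so that $l_\alpha(s) < u_\alpha(s)$ on a nonempty open subbox of $U$.

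The main obstacle is exactly that last point. The functions $l_\alpha, u_\alpha$ are in general neither upper nor lower semicontinuous; their jumps occur along the small-but-dense union of hyperplanes $\bigcup_{w \in P_\alpha,\, i < n}\{s : s_i = w_i\}$, so one cannot simply restrict to its complement, and $\{l_\alpha = u_\alpha\}$ can be genuinely nonempty even when $P_\alpha$ is the scaffold $D$ alone. Thus the density of $D$ does not by itself suffice; the finer content of \cref{lemma:1}(2) — in particular clause (2c), which plants in every open box a coordinatewise-increasing pair tied to a pair of the special type from (2a) — is what prevents $l_\alpha$ and $u_\alpha$ from pinching together on an open set. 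Making precise which tuples to take and how their coordinates are forced to interleave with $s$ is the delicate part; the rest (the cardinality bookkeeping, the reduction to nowhere-density, and the appeal to $\mathrm{cov}(\mathcal{B}) = 2^{\aleph_0}$) is routine.
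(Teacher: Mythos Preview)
Your framework is right up to the decomposition
\[
\mathcal{T}\ \subseteq\ \bigcup_{\beta \le \alpha}\ \bigcup_{c \in (Q_\alpha \cup n)^{n-1}}\ Z_{\beta,c},\qquad Z_{\beta,c}=\{s : l_\alpha(s)=u_\alpha(s)=f_\beta(g(s,c))\},
\]
and it would indeed suffice to show each $Z_{\beta,c}$ is nowhere dense. The gap is in the next move: you propose to deduce this from the \emph{stronger} assertion that $\{l_\alpha=u_\alpha\}$ itself is nowhere dense. That step throws away precisely the piece of structure the argument needs, and your sketch of how \cref{lemma:1}(2) would ``prevent $l_\alpha$ and $u_\alpha$ from pinching together on an open set'' does not go through. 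The functions $l_\alpha,u_\alpha$ jump along the hyperplanes $\{s_i=w_i\}$ for $w\in P_\alpha$, and there is no continuous object tying their common value at nearby points of $\{l_\alpha=u_\alpha\}$; so even with $e,e',w,w'$ in hand, knowing $u_\alpha\le e_n$ on one region and $l_\alpha\ge e'_n$ on another yields no contradiction, because on the set $\{l_\alpha=u_\alpha\}$ the common value can simply jump when you cross from one region to the other.

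What the paper actually uses is the special shape of $f_\beta\circ g(\cdot,c)$: since $c\in (Q_\alpha\cup n)^{n-1}$ has length $n-1$, there is some $i<n$ with $i\notin\mathrm{ran}(c)$, and then $s\mapsto f_\beta(g(s,c))$ does not depend on $s_i$. This coordinate-independence is the missing idea. One invokes \cref{lemma:1}(2) \emph{for this specific $i$}; the pair $w,w'$ from (2c) differs from $e\upharpoonright n,\,e'\upharpoonright n$ only in coordinate $i$, so the continuous function $f_\beta\circ g(\cdot,c)$ takes (approximately) the same values near $w,w'$ as it does near $e\upharpoonright n,\,e'\upharpoonright n$, and \emph{this} is what transports the inequalities $u_\alpha\le e_n$ and $l_\alpha\ge e'_n$ across to produce two elements of $D$ realizing the increasing type --- the contradiction. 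In short: do not pass to an arbitrary continuous $\varphi$, and do not pass to the bare set $\{l_\alpha=u_\alpha\}$; work with $Z_{\beta,c}$ and exploit that $f_\beta\circ g(\cdot,c)$ ignores one coordinate.
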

\begin{proof}
If there were an $s \in \mathbb{R}^n \setminus A_\alpha$ such that $l_\alpha(s) < u_\alpha(s)$ we would be done. Indeed, since there are only ${<}2^{\aleph_0}$-many possible $u \in (Q_\alpha \cup \mathrm{ran}(s))^{n-1}$ and $\beta \le \alpha$, there would certainly be an $x \in (l_\alpha(s), u_\alpha(s))$ satisfying (2). Thus, we need to deal with the case in which $l_\alpha(s) = u_\alpha(s)$ for every $s \in \mathbb{R}^n \setminus A_\alpha$.

Towards a contradiction, suppose that for every $s \in \mathbb{R}^n\setminus A_\alpha$ there exists a $u \in (Q_\alpha \cup \mathrm{ran}(s))^{n-1}$ and a $\beta \le \alpha$ such that $l_\alpha(s) = u_\alpha(s) = f_\beta(u)$. Consider the map $g$ defined in \cref{claim:thm:1-1}.
Then, another way of stating our (absurd) assumption is that for every  $s \in \mathbb{R}^n \setminus A_\alpha$ there exists a $c_s \in  (Q_\alpha \cup n)^{n-1}$ and a $\beta_s \le \alpha$ such that $l_\alpha(s) = u_\alpha(s) = f_{\beta_s} \circ g (s, c_s)$.

Since we are assuming $\mathrm{cov}(\mathcal{B}) = 2^{\aleph_0}$, and since, by \cref{claim:thm:1-1}, $A_\alpha$ is the union of ${<}2^{\aleph_0}$-many nowhere dense sets, it follows that there must exist a nonempty open set $U$, a $\beta \le \alpha$ and a $c \in (Q_\alpha \cup n)^{n-1}$ such that $U$ is included in the closure of the set 
\[
R \coloneqq \{s \in \mathbb{R}^n \setminus A_\alpha \mid (\beta_s, c_s) = (\beta, c)\}.
\]

Now, since $c$ has length $n-1$, there is an $i < n$ such that $i\not\in \mathrm{ran}(c)$. Fix one such $i$ and fix $e,e'$ in $D$ that satisfy (2a)-(2c) of \cref{lemma:1} for $U$ and $i$. Let $w,w'$ witness (2c) for $e, e'$.

For every $s, s' \in R$ such that $t(s, e \upharpoonright n) = \langle 1\rangle^n$ and $t(s',e'  \upharpoonright n) = \langle 0\rangle^n$, we have $u_\alpha(s) \le e_n$ and $l_\alpha(s') \ge e'_n$. Thus,
\[
f_{\beta} \circ g(s, c) = u_\alpha(s) \le e_n < e'_n \le l_\alpha(s') = f_{\beta} \circ g(s', c).
\]
Note that the value of $f_{\beta} \circ g(s, c)$ does not depend on the $i$-th element of $s$, since $i \not \in \mathrm{ran}(c)$. Therefore, by continuity of $f_\beta$ and by the density of $R$ in $U$, we can find $z,z' \in R \cap U$ close enough to $w$ and $w'$, respectively, such that
\begin{enumerate}[label=(\alph*)]
\itemsep0.3em
\item $t(z,z') = \langle 0 \rangle^n$,
\item $f_{\beta} \circ g(z, c) \le e_n$,
\item $f_{\beta} \circ g(z', c) \ge e_n'$.
\end{enumerate}

By density of $\{a \upharpoonright n \mid a \in D\}$ in $\mathbb{R}^n$, there must be $a,a' \in D$ (with $a \upharpoonright n$ and $a' \upharpoonright n$ close enough to $z$ and $z'$, respectively)  with $a\upharpoonright n$ and $a' \upharpoonright n$ in $U$ such that 
\begin{enumerate}[label=(\roman*)]
\itemsep0.3em
\item $t(a \upharpoonright n,a' \upharpoonright n) = \langle 0 \rangle^n$,
\item $t(z, a \upharpoonright n) = \langle 0\rangle^n$,
\item $t(z',a' \upharpoonright n) = \langle 1\rangle^n$.
\end{enumerate}
In particular, $a_n \le l_\alpha(z) = f_{\beta} \circ g(z, c)$ and  $a'_n \ge u_\alpha(z') = f_{\beta} \circ g(z', c)$. Since $f_{\beta} \circ g(z, c) < f_{\beta} \circ g(z', c)$, we conclude $a_n < a'_n$. But then $a$ and $a'$ realize the increasing type. In particular, we have found two distinct elements of $D$ that realize the increasing type, contradiction.
\end{proof}
Fix an $s \in \mathbb{R}^n \setminus A_\alpha$ and an $x$ satisfying the statement of our claim, and set $r_\alpha = s^\smallfrown \langle x\rangle$.

\lnote*{}{
It remains to prove that the inductive hypothesis is still satisfied, i.e. that $D \cup \{r_\beta \mid \beta \le \alpha\}$ is a set of 1-1 mutually disjoint $(n+1)$-tuples that avoids the increasing type. 

\begin{claim}
$r_\alpha$ is 1-1.
\end{claim}
\begin{proof}
By construction, $r_\alpha \upharpoonright n \not\in A_\alpha$. In particular, we have that $r_{\alpha, i} \neq f_0(\langle r_{\alpha, j}\rangle^{n-1}) = r_{\alpha, j}$ for all $j < i < n$. Moreover, by (2) of \cref{claim:thm:1}, we know that $r_{\alpha, n} \neq  f_0(\langle r_{\alpha, i}\rangle^{n-1}) = r_{\alpha, i}$ for all $i < n$. Thus, $r_\alpha$ is 1-1.
\end{proof}
\begin{claim}\label{claim.2}
For all $r \in P_\alpha$, $r$ and $r_\alpha$ are disjoint.
\end{claim}
\begin{proof}
Fix some $r \in P_\alpha$. The argument is analogous to the one of the previous claim. Since $r_\alpha \upharpoonright n \not\in A_\alpha$, we know that $f_0(\langle r_i\rangle^{n-1}) = r_i \not\in \mathrm{ran}(r_\alpha \upharpoonright n)$ for every $i \le n$. In other words, we know that $\mathrm{ran}(r_\alpha \upharpoonright n) \cap \mathrm{ran}(r) = \emptyset$. 
Moreover, by (2) of \cref{claim:thm:1}, we can also conclude that $r_{\alpha, n} \not\in \mathrm{ran}(r)$. Thus, $r$ and $r_\alpha$ are disjoint.
\end{proof}

\begin{claim}
For all $r \in P_\alpha$, $r$ and $r_\alpha$ do not realize the increasing type.
\end{claim}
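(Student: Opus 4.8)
The plan is to obtain a contradiction from the existence of such a pair, using only clause~(1) of \cref{claim:thm:1} together with the fact that $r_\beta\in P_\alpha$ whenever $\beta<\alpha$. Write $s\coloneqq r_\alpha\upharpoonright n$ and $x\coloneqq r_{\alpha,n}$, so that $r_\alpha=s^\smallfrown\langle x\rangle$ and, by the choice of $x$, $l_\alpha(s)\le x\le u_\alpha(s)$. By the previous claim $r_\beta$ and $r_\alpha$ are disjoint, hence all of their coordinates are distinct, so ``$r_\beta$ and $r_\alpha$ realize the increasing type'' means that either $r_{\alpha,i}<r_{\beta,i}$ for every $i\le n$, or $r_{\beta,i}<r_{\alpha,i}$ for every $i\le n$.

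First I would handle the case $r_{\alpha,i}<r_{\beta,i}$ for every $i\le n$. Looking only at the coordinates $i<n$ gives $t(s,r_\beta\upharpoonright n)=\langle 0\rangle^n$, so $w=r_\beta$ is one of the tuples of $P_\alpha$ appearing in the supremum defining $l_\alpha(s)$; therefore $l_\alpha(s)\ge r_{\beta,n}$. Combined with $l_\alpha(s)\le x=r_{\alpha,n}$ this forces $r_{\beta,n}\le r_{\alpha,n}$, contradicting the case hypothesis $r_{\alpha,n}<r_{\beta,n}$. The other case, $r_{\beta,i}<r_{\alpha,i}$ for every $i\le n$, is symmetric: restricting to $i<n$ gives $t(s,r_\beta\upharpoonright n)=\langle 1\rangle^n$, so $r_\beta$ is one of the tuples appearing in the infimum defining $u_\alpha(s)$, whence $u_\alpha(s)\le r_{\beta,n}$; together with $x=r_{\alpha,n}\le u_\alpha(s)$ this gives $r_{\alpha,n}\le r_{\beta,n}$, contradicting $r_{\beta,n}<r_{\alpha,n}$. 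Since both cases are impossible, $r_\beta$ and $r_\alpha$ do not realize the increasing type.

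I do not expect a real obstacle here: the substance of the construction lies in \cref{claim:thm:1}, whose clause~(1) was tailored exactly so that $r_{\alpha,n}$ sits between $l_\alpha(r_\alpha\upharpoonright n)$ and $u_\alpha(r_\alpha\upharpoonright n)$, which is precisely what prevents $r_\alpha$ from completing an earlier $r_\beta$ to a monotone $(n+1)$-tuple. The only thing demanding care is keeping the sign conventions straight---translating ``realizing the increasing type'' into the coordinatewise inequalities that feed the definitions of $l_\alpha$ and $u_\alpha$---but this is routine bookkeeping rather than a genuine difficulty.
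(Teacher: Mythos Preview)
Your proof is correct and essentially the same as the paper's: both use that $r_\beta\in P_\alpha$ contributes to the defining sup/inf of $l_\alpha(s)$ or $u_\alpha(s)$, so the constraint $l_\alpha(s)\le r_{\alpha,n}\le u_\alpha(s)$ forces the last coordinate to break monotonicity. The only cosmetic difference is that the paper argues directly (if the first $n$ coordinates realize the increasing type, compute $t(r_\beta,r_\alpha)$ explicitly) while you argue by contradiction, but the content is identical.
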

\begin{proof}
Fix $r \in P_\alpha$. If $r \upharpoonright n$ and $r_\alpha \upharpoonright n$ do not realize the increasing type, then a fortiori $r$ and $r_\alpha$ do not. So we can assume that $r \upharpoonright n$ and $r_\alpha \upharpoonright n$ realize the increasing type. For the sake of simplicity, let us assume $t(r \upharpoonright n, r_\alpha \upharpoonright n) = \langle 0 \rangle^{n}$ (the other case is analogous).

It follows that $u_\alpha(r_\alpha \upharpoonright n) \le r_n$. By (1) of \cref{claim:thm:1}, we know that $r_{\alpha, n} \le u_\alpha(r_\alpha \upharpoonright n)$. Thus, given that $r_{\alpha, n} \neq r_n$ by Claim~\ref{claim.2}, we conclude that $r_{\alpha, n} < r_n$. In particular, $t(r, r_\alpha) = \langle 0\rangle^n{}^\smallfrown \langle 1\rangle$, which clearly is different from the increasing type.
\end{proof}

This ends the inductive definition. The set $E$ is not $(2^{\aleph_0}, n+1)$-entangled, since $\{r_\alpha \mid \alpha < 2^{\aleph_0}\}$ is a collection of size $2^{\aleph_0}$ of 1-1 mutually disjoint $(n+1)$-tuples of $E$ that avoids the increasing type. } Now note that if we order $2^{\aleph_0} \times (n+1)$ with the lexicographic ordering, then the enumeration $\langle r_{\alpha, i} \mid \alpha < 2^{\aleph_0}, i \le n\rangle$ of $E$ satisfies the hypothesis of \cref{prop:criterionforentangledness} by construction, and hence $E$ is $(2^{\aleph_0}, n)$-entangled.
\end{proof}

\begin{remark}
Actually, a slight modification of the argument of \cref{thm:1s} yields the following: if $\mathrm{cov}(\mathcal{B}) = 2^{\aleph_0}$, then, for every $n > 1$ and for every cardinal $\kappa \le 2^{\aleph_0}$ with $\mathrm{cf}(\kappa) = \mathrm{cf}(2^{\aleph_0})$, there exists a $(\kappa, n)$-entangled set of reals which is not $(\kappa, n+1)$-entangled.
\end{remark}

\begin{question}
Is the hypothesis $\mathrm{cov}(\mathcal{B}) = 2^{\aleph_0}$ necessary in \cref{thm:1s}?
\end{question}

\begin{question}
Let $n\geq2$. Can there be an $n$-entangled set but no $(n+1)$-entangled sets?\footnote{A few weeks after our preprint was made public, a positive answer was given by Cruz \cite{chapital2025nentangledsetn1entangledsets}.}
\end{question}

\section{Entangledness is not a topological property}\label{sec:thm2}

This section is devoted to the proof of \cref{thm:2s} and \cref{cor:2}, from which \cref{thm:2} directly follows.

Given two distinct $x, y \in {}^\omega 2$, we let $\Delta(x,y)$ be the least $n\in\omega$ such that $x(n) \neq y(n)$. A subset $S\subseteq {}^\omega 2$ is \emph{monochromatic} if the parity of $\Delta(x,y)$ is the same for all distinct $x,y \in S$.

The cardinal invariant $\mathfrak{hm}$, introduced in \cite{MR1942314}, is defined as the least cardinality of a family of monochromatic sets that cover ${}^\omega 2$. A peculiar property of $\mathfrak{hm}$ is that it cannot fall far behind the continuum, as $\mathfrak{hm}^+ \ge 2^{\aleph_0}$---we refer the reader to \cite{MR2114964,MR4562119} for more on this cardinal invariant. 

Regarding the statement of the next theorem, note that proving the existence of an entangled set of reals with a nontrivial autohomeomorphism is somewhat trivial: you can simply take an entangled set of reals and add two isolated points to it; it is still entangled and has a nontrivial autohomeomorphism, the one that simply swaps the two isolated points. It becomes less trivial if we ask our entangled set to be sufficiently crowded. Recall that a set $A \subseteq \mathbb{R}$ is said to be \emph{$\kappa$-crowded}, for some infinite cardinal $\kappa$, if $|U \cap A| \ge \kappa$ for every open subset $U \subseteq \mathbb{R}$ with $U \cap A \neq \emptyset$.

\begin{theorem}\label{thm:2s}
Assume $\mathfrak{hm} = 2^{\aleph_0}$. Then, there exists a $2^{\aleph_0}$-crowded, $2^{\aleph_0}$-entangled set of reals with a nontrivial autohomeomorphism.
\end{theorem}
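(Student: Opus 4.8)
The plan is to build, by a transfinite recursion of length $2^{\aleph_0}$, a set of reals $E$ together with an explicit nontrivial autohomeomorphism $\varphi$ of $E$, while diagonalizing (Sierpiński-style) against all continuous functions on $G_\delta$ subsets of $\mathbb{R}^{n-1}$ so that \cref{prop:criterionforentangledness} applies to $E$ for every $n$, and simultaneously ensuring $2^{\aleph_0}$-crowdedness. The natural candidate for $\varphi$: fix a countable dense $D_0 \subseteq \mathbb{R}$ partitioned into two dense pieces $D_0^0, D_0^1$ carrying an order-reversing homeomorphism (e.g.\ realize $\mathbb{R}$ as ${}^\omega 2$ with the lexicographic order via $\Delta$, or work on two disjoint intervals $I_0, I_1$ joined by an affine flip $h\colon I_0 \to I_1$); then at each stage $\alpha$, when we add a point $r_\alpha \notin I_0 \cup I_1$ we also throw in its "mirror" $h'(r_\alpha)$ in a fresh copy, and $\varphi$ is the involution swapping each point with its mirror. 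To make $\varphi$ a \emph{homeomorphism} and not merely a bijection, the two halves of $E$ must be genuinely interleaved as subspaces of $\mathbb{R}$ — this is where the hypothesis $\mathfrak{hm} = 2^{\aleph_0}$ enters. Using the characterization of $\mathfrak{hm}$ via monochromatic covers of ${}^\omega 2$, one arranges the two halves as the image of a monochromatic-type decomposition so that the parity-of-$\Delta$ map induces the required local homeomorphism between them; in particular $\mathfrak{hm} = 2^{\aleph_0}$ is what lets the diagonalization (which kills $<2^{\aleph_0}$ many continuous "traces" at each step) survive alongside the rigidity of the two halves as topological subspaces of $\mathbb{R}$.

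Concretely I would interleave three bookkeeping tasks over $\alpha < 2^{\aleph_0}$. (i) \emph{Crowdedness}: fix a basis $\langle V_k \mid k < \omega\rangle$ of $\mathbb{R}$; at stage $\alpha$ choose the new point $r_\alpha$ inside the basic open set dictated by a fixed surjection $2^{\aleph_0} \to \omega$ with each value hit cofinally, so every $V_k$ eventually accumulates $2^{\aleph_0}$ points of $E$. (ii) \emph{Entangledness}: maintain the invariant that the enumeration built so far meets condition \eqref{eq:criterionforentangledness} of \cref{prop:criterionforentangledness} for every $n$; at stage $\alpha$, given the countably-or-fewer continuous functions $f_\beta$ (over all $n$) listed before $\alpha$, the set of "forbidden" reals (values $f_\beta(u)$ for $u$ ranging over finite tuples from the countable-or-smaller set built so far, together with the preimages forced by $\varphi$) has size $<2^{\aleph_0}$, so there is room to pick $r_\alpha$ avoiding it. (iii) \emph{Homeomorphism}: simultaneously with $r_\alpha$ add $\varphi(r_\alpha)$, and verify the local-homeomorphism condition; crucially, $\varphi(r_\alpha)$ must \emph{also} avoid the forbidden set of step (ii), which is fine because adding one more point only enlarges the forbidden set by $<2^{\aleph_0}$ many reals.

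The main obstacle is the tension between (ii) and (iii): a Sierpiński diagonalization wants each new point to be "generic," avoiding all continuous traces of the previously chosen points, whereas the autohomeomorphism constraint forces a second point $\varphi(r_\alpha)$ rigidly determined by $r_\alpha$ — and that second point, being a concrete function of $r_\alpha$, is exactly the sort of thing the diagonalization normally forbids. The resolution is that $\varphi$ is not an \emph{arbitrary} continuous function: it is an involution with a fixed countable "spine" $D_0$, so $\varphi(r_\alpha)$ lands in a region (the mirror copy) that is disjoint from the tuples whose traces we are avoiding, and one only has to check that the single new value $\varphi(r_\alpha)$ does not collide with the (still $<2^{\aleph_0}$ many) already-forbidden reals and does not itself create a trace hitting an earlier point — both handled by choosing $r_\alpha$ from a carefully pruned open set rather than a single point. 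Verifying that $\varphi$ extends to a genuine homeomorphism of $E$ (openness of $\varphi$ and $\varphi^{-1}$, i.e.\ that basic open sets of $\mathbb{R}$ restrict correctly across the swap) is the second delicate point, and it is precisely here that one cashes in $\mathfrak{hm} = 2^{\aleph_0}$ to guarantee the two halves sit inside $\mathbb{R}$ in topologically indistinguishable ways.
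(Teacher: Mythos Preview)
Your proposal has a structural gap at the heart of step (ii). You plan to arrange the enumeration of $E$ so that the criterion of \cref{prop:criterionforentangledness} holds, but this is impossible for any set closed under a nontrivial continuous involution $\varphi$. The map $\varphi$ (or any continuous $G_\delta$ extension of it) is itself one of the functions $f$ against which the criterion must hold, and for $n=2$ the criterion forces every sufficiently long initial segment $E_\beta = \{r_\gamma \mid \gamma < \beta\}$ to satisfy $\varphi[E_\beta] \cap E \subseteq E_\beta$, i.e.\ to be $\varphi$-closed. But if $E_\beta$ and $E_{\beta+1} = E_\beta \cup \{r_\beta\}$ are both $\varphi$-closed and $\varphi$ is an involution, then $\varphi(r_\beta) = r_\beta$; so $\varphi$ would fix every $r_\beta$ past the threshold, contradicting nontriviality on a $2^{\aleph_0}$-crowded set. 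Your proposed fix---that $\varphi(r_\alpha)$ merely has to avoid the forbidden values---misses the point: the obstruction is not a collision but the fact that $\varphi$ itself appears in the list of functions you are diagonalizing against.

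The paper circumvents this by abandoning \cref{prop:criterionforentangledness} altogether. It fixes in advance the explicit autohomeomorphism $s_2$ of ${}^\omega 2$ (flip each even-indexed bit), which has the key property that $s_2 \upharpoonright A$ is monotone if and only if $A$ is monochromatic. The diagonalization is then run not against continuous functions but against $G_\delta$ subsets of $({}^\omega 2)^{n+1}$ whose vertical sections are monochromatic (an extension lemma, \cref{lemma:extension}, plays the role of Kuratowski's theorem). This is precisely where $\mathfrak{hm} = 2^{\aleph_0}$ is spent---at each stage one must land outside a union of fewer than $2^{\aleph_0}$ monochromatic sets---and \emph{not} in verifying that $s_2 \upharpoonright E$ is a homeomorphism, which is automatic since $s_2$ is a global autohomeomorphism of the Cantor space. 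Entangledness is then established by a bespoke two-case argument: tuples whose last two coordinates form an $s_2$-orbit are handled via the monochromatic diagonalization, and the remaining tuples via the discontinuity argument of \cref{lemma:finalarg}.
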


Given a set $A\subseteq X\times Y$, we denote by $\mathrm{Proj}_X[A]$ the canonical projection of $A$ onto $X$. Moreover, given some $x \in X$, we denote by $A_x$ the set $\{y \in Y \mid (x,y) \in A\}$. If $A \subseteq X^n$, then we simply write $\mathrm{Proj}_k(A)$, with $k \le n$, instead of $\mathrm{Proj}_{X^k}(A)$. 

The following lemma plays a role similar to that of Kuratowski's extension theorem in the proof of \cref{prop:criterionforentangledness}.

\begin{lemma}\label{lemma:extension}
Given a Polish space $X$ and a set $A\subseteq X \times {}^\omega 2$ such that $A$ is closed in $\mathrm{Proj}_X[A]\times {}^\omega 2$ and, for all $x \in X$, $A_x$ is monochromatic, then there exists a $G_\delta$ set $\tilde{A}$ such that $A \subseteq \tilde{A} \subseteq \mathrm{cl}(A)$ and, for all $x \in X$, $\tilde{A}_x$ is monochromatic.
\end{lemma}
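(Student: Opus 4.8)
The plan is \emph{not} to try to repair the fibers of $\mathrm{cl}(A)$ that fail to be monochromatic, but instead to simply discard the base points at which this happens, the point being that the remaining base points form a $G_\delta$ set — and this is where compactness of ${}^\omega 2$ enters. Concretely, set
\[
G := \{x \in X : \mathrm{cl}(A)_x \text{ is monochromatic}\}, \qquad \tilde A := \mathrm{cl}(A) \cap (G \times {}^\omega 2).
\]
The inclusion $\tilde A \subseteq \mathrm{cl}(A)$ is immediate, and every fiber $\tilde A_x$ is monochromatic, since it equals $\mathrm{cl}(A)_x$ when $x \in G$ and $\emptyset$ when $x \notin G$. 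So the two things left to verify are that $A \subseteq \tilde A$ and that $\tilde A$ is $G_\delta$.

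For $A \subseteq \tilde A$ I would first note that, because $A$ is closed in $\mathrm{Proj}_X[A] \times {}^\omega 2$, one has $\mathrm{cl}(A)_x = A_x$ for every $x \in \mathrm{Proj}_X[A]$: indeed $\mathrm{cl}(A) \cap (\mathrm{Proj}_X[A] \times {}^\omega 2)$ is exactly the closure of $A$ inside the subspace $\mathrm{Proj}_X[A] \times {}^\omega 2$, which is $A$ by hypothesis. Since each $A_x$ is monochromatic, this yields $\mathrm{Proj}_X[A] \subseteq G$, and hence $A \subseteq \mathrm{cl}(A) \cap (G \times {}^\omega 2) = \tilde A$. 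This is the only place the relative closedness of $A$ is used; it is loosely the analogue of the fact that in Kuratowski's extension theorem the oscillation of a continuous function vanishes on its domain.

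The heart of the matter is showing that $G$ is $G_\delta$, and here compactness of ${}^\omega 2$ does the work. For $t \in {}^{<\omega}2$ write $[t] := \{z \in {}^\omega 2 : t \subseteq z\}$ and let $F_t := \{x \in X : [t] \cap \mathrm{cl}(A)_x \neq \emptyset\}$, which is precisely the projection to $X$ of the closed set $\mathrm{cl}(A) \cap (X \times [t])$; since $[t]$ is compact, the projection $X \times [t] \to X$ is a closed map, so $F_t$ is closed in $X$. Next, calling $m$ a \emph{splitting level} of a set $C \subseteq {}^\omega 2$ if there is $s \in {}^m 2$ with $[s^\smallfrown 0] \cap C \neq \emptyset \neq [s^\smallfrown 1] \cap C$, one checks that the set of splitting levels of $C$ equals $\{\Delta(x,y) \mid x \neq y \in C\}$, so that $C$ is monochromatic exactly when all its splitting levels have the same parity. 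Consequently $x \notin G$ if and only if $x$ lies in both of the $F_\sigma$ sets $\bigcup_{m \text{ even}}\bigcup_{s \in {}^m 2}(F_{s^\smallfrown 0} \cap F_{s^\smallfrown 1})$ and $\bigcup_{m \text{ odd}}\bigcup_{s \in {}^m 2}(F_{s^\smallfrown 0} \cap F_{s^\smallfrown 1})$; the intersection of two $F_\sigma$ sets is $F_\sigma$, so $G$ is $G_\delta$. Then $\tilde A = \mathrm{cl}(A) \cap (G \times {}^\omega 2)$ is the intersection of a closed set with a $G_\delta$ set, hence $G_\delta$, which finishes the argument.

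The only genuinely delicate point is the conceptual one flagged at the outset. Since $\mathrm{cl}(A)$ may well have non-monochromatic fibers over base points outside $\mathrm{Proj}_X[A]$, it is tempting to look for a selection-type argument that coherently chooses a monochromatic sub-fiber at each base point, which seems painful; the observation that makes the proof short is that, thanks to compactness of ${}^\omega 2$, the set of ``bad'' base points is $F_\sigma$ and automatically disjoint from $\mathrm{Proj}_X[A]$, so that simply deleting it produces the desired $G_\delta$ set without any repair work.
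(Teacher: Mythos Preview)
Your proof is correct and follows essentially the same approach as the paper: define the set of base points $x$ for which $\mathrm{cl}(A)_x$ is monochromatic, show it is $G_\delta$ via compactness of ${}^\omega 2$ (the paper phrases this as ``$\{x:\mathrm{cl}(A)_x\cap N_s=\emptyset\}$ is open'' while you phrase the complementary statement as ``$F_t$ is closed''), and intersect $\mathrm{cl}(A)$ with the corresponding cylinder. The argument that $A\subseteq\tilde A$ via relative closedness is also identical.
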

\begin{proof}
Consider the set $D = \{x \in X \mid \mathrm{cl}(A)_x \text{ is monochromatic}\}$. We claim that $D$ is $G_\delta$. Indeed, if we let $S_0$ and $S_1$ be the sets of all $s \in {}^{<\omega}2$ of even length and odd length, respectively, then, for all $x \in X$, the following holds:
\begin{align*}
x \in D \iff& \forall s \in S_0 \ (\mathrm{cl}(A)_x \cap N_{s^\smallfrown 0} = \emptyset \text{ or } \mathrm{cl}(A)_x \cap N_{s^\smallfrown 1} = \emptyset) \text{ or }\\& \forall s \in S_1 \ (\mathrm{cl}(A)_x \cap N_{s^\smallfrown 0} = \emptyset \text{ or } \mathrm{cl}(A)_x \cap N_{s^\smallfrown 1} = \emptyset).
\end{align*}
Here, $N_s$ is the standard open set of the Cantor space induced by $s$---i.e. the set of all the infinite binary sequences extending $s$. Using the sequential compactness of the Cantor space, an easy argument shows that the set of $x$'s such that $\mathrm{cl}(A)_x \cap N_{s} = \emptyset$ is open for every $s$. Therefore we conclude that $D$ is $G_\delta$.

Now simply let $\tilde{A} = \mathrm{cl}(A) \cap (D \times {}^\omega 2)$. It is clearly $G_\delta$. Since we assumed $A$ to be closed in $\mathrm{Proj}_X[A]\times {}^\omega 2$, we clearly have $A_x = \mathrm{cl}(A)_x$ for all $x \in \mathrm{Proj}_X[A]$. Furthermore, since $A_x$ is assumed to be monochromatic for all $x$, we conclude that $\mathrm{Proj}_X[A] \subseteq D$. In particular, $A \subseteq \tilde{A}$. 
\end{proof}

\begin{proof}[Proof of \cref{thm:2s}]
We work with the Cantor space ${}^\omega 2$, which we consider endowed with the lexicographic ordering. Note that the order-topology of the lexicographic ordering  coincides with the standard topology of the Cantor space. 

Consider the map $s_2:{}^\omega 2 \rightarrow {}^\omega 2$ defined by:  for every $x \in {}^\omega 2$ and $n\in\omega$,
\[
s_2(x)(n) = \begin{cases}
x(n)+1 \bmod 2 &\text{ if } n \bmod 2  = 0,\\
x(n)&\text{ otherwise}.
\end{cases}
\]

Note that a set $A \subseteq {}^\omega 2$ is monochromatic if and only if $s_2 \upharpoonright A$ is monotone.

We want to define a  $2^{\aleph_0}$-crowded and $2^{\aleph_0}$-entangled set $E\subseteq {}^\omega 2$ which is closed under the map $s_2$. If we manage to do such a construction we are done, as the map $s_2 \upharpoonright E$ would be a nontrivial autohomeomorphism of $E$.

To ensure crowdedness, fix a surjective map $\psi: 2^{\aleph_0} \rightarrow \omega$ with $|\psi^{-1}(n)| = 2^{\aleph_0}$ for every $n\in\omega$ and fix also an enumeration $\langle I_n \mid n \in\omega\rangle$ of the basic open sets of ${}^\omega 2$. 
Finally, fix an enumeration $\langle H_\alpha^n \mid n > 0 \text{ and } \alpha < 2^{\aleph_0} \rangle$ of the $G_\delta$ sets $H\subseteq ({}^\omega 2)^{n+1}$ such that $H_x$ is monochromatic for all $x \in ({}^\omega 2)^{n}$. We can suppose for simplicity that $H_0^1 = \{(x,x) \mid x \in {}^\omega 2\} \cup \{(x, s_2(x)) \mid x \in {}^\omega 2\}$.

Now we define inductively a sequence $\langle x_\alpha \mid \alpha < 2^{\aleph_0}\rangle$ of elements of ${}^\omega 2$ as follows: suppose we have defined the $x_\beta$'s for all $\beta < \alpha$, then we pick $x_\alpha$ inside $I_{\psi(\alpha)}$ and outside the set 
\begin{equation}\label{eq:outset}
\bigcup\big\{(H_\gamma^n)_x \mid n > 0 \text{ and } \gamma < \alpha \text{ and } x \in \{x_\beta \mid \beta < \alpha\}^{n}\big\}.
\end{equation}
Note that the set in \eqref{eq:outset}, being the union of less than $2^{\aleph_0}$-many monochromatic sets, does not cover $I_{\psi(\alpha)}$, since we are assuming $\mathfrak{hm} = 2^{\aleph_0}$.

Let $E = \{x_\alpha \mid \alpha < 2^{\aleph_0}\} \cup \{s_2(x_\alpha) \mid \alpha < 2^{\aleph_0}\}$. Clearly, $E$ is $2^{\aleph_0}$-crowded. We are left to prove that $E$ is $2^{\aleph_0}$-entangled.

To each pair $(\alpha, i)$ in $2^{\aleph_0} \times 2$, we associate the real $r_{\alpha, i}$ which is $x_\alpha$ if $i = 0$ and $s_2(x_\alpha)$ if $i = 1$. Note that the map $(\alpha, i) \mapsto r_{\alpha, i}$ is a bijection between $2^{\aleph_0} \times 2$ and $E$.

\begin{claim}
$E$ is $2^{\aleph_0}$-entangled.
\end{claim}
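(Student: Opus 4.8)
The plan is to verify the hypotheses of \cref{prop:criterionforentangledness} for a suitable enumeration of $E$, exactly as in the proof of \cref{thm:1s}. Order $2^{\aleph_0} \times 2$ lexicographically and consider the induced enumeration $\langle r_{\alpha,i} \mid \alpha < 2^{\aleph_0}, i < 2 \rangle$ of $E$. Fix $n > 0$ and a continuous function $f$ from a $G_\delta$ subset of $\mathbb{R}^{n-1}$ (here $({}^\omega 2)^{n-1}$, after identifying the Cantor space with a closed subset of $\mathbb{R}$, or just working intrinsically) to $\mathbb{R}$. I want to show that for all sufficiently large ``stages'' $\beta$, the value $f$ takes on $(n-1)$-tuples of already-enumerated points of $E$ lands back inside the already-enumerated part of $E$. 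The point is that the map $s_2$ is continuous and the diagonal relation $H^1_0$ was explicitly thrown into the enumeration of the $G_\delta$ sets with monochromatic sections, so the only obstruction to entangledness — a continuous function pairing up tuples the wrong way — has already been diagonalized against at some stage $\gamma < 2^{\aleph_0}$.

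More precisely, the key observation is the sentence highlighted in the proof of \cref{thm:2s}: a set $A \subseteq {}^\omega 2$ is monochromatic if and only if $s_2 \upharpoonright A$ is monotone; equivalently, a continuous $f : G_\delta \to {}^\omega 2$ has ``monochromatic graph sections'' (i.e. $\mathrm{graph}(f)_x$ monochromatic for each relevant $x$) precisely when composing with $s_2$ appropriately detects monotonicity. So given the continuous $f$ coming from the criterion, I would build from it a $G_\delta$ set $H \subseteq ({}^\omega 2)^{n+1}$ whose sections over $({}^\omega 2)^n$ are monochromatic and which encodes the graph of $f$ — using \cref{lemma:extension} to pass from the closure of the relevant graph to a genuine $G_\delta$ with monochromatic sections. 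This $H$ appears as some $H^n_\gamma$ in our fixed enumeration. Then for every $\beta$ with (roughly) $\beta \ge \gamma$ and every tuple $x$ of points $r_{\delta, j}$ with the $\delta$'s below $\beta$, the construction of $x_\beta$ explicitly excluded $x_\beta$ (and hence $s_2(x_\beta)$, using the diagonal section) from $(H^n_\gamma)_x$, which is exactly the assertion that $f\big[\{r_\xi \mid \xi < \beta\}^{n-1}\big] \cap E \subseteq \{r_\xi \mid \xi < \beta\}$ in the sense of \eqref{eq:criterionforentangledness}. Feeding this into \cref{prop:criterionforentangledness} yields $(2^{\aleph_0}, n)$-entangledness for every $n$, i.e. $E$ is $2^{\aleph_0}$-entangled.

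The main obstacle I anticipate is bookkeeping: matching the ``continuous $f$ on a $G_\delta$ subset of $\mathbb{R}^{n-1}$'' demanded by \cref{prop:criterionforentangledness} with the ``$G_\delta$ set $H \subseteq ({}^\omega 2)^{n+1}$ with monochromatic sections'' enumerated in the proof. One must (a) transfer between the real line and the Cantor space cleanly — $f$ restricted to $({}^\omega 2)^{n-1}$ need not have values in ${}^\omega 2$, but we only care about its behaviour on inputs from $E$ and outputs in $E \subseteq {}^\omega 2$, so we can intersect the graph with $({}^\omega 2)^{n-1} \times {}^\omega 2$ and take the closure there; (b) check that this graph, or its closure, has monochromatic vertical sections — this is where continuity of $f$ together with the ``monochromatic $\iff$ $s_2$ monotone'' equivalence does the work, possibly after invoking \cref{lemma:extension} to make it $G_\delta$; and (c) handle the finitely many ``small'' stages and the off-by-one issues with the lexicographic order on $2^{\aleph_0} \times 2$ (a tuple may involve both $x_\delta$ and $s_2(x_\delta)$), which is harmless since $s_2$ is a fixed continuous involution and $H^1_0$ takes care of it. None of these steps is deep, but assembling them carefully is the substance of the proof.
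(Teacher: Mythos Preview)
Your approach has a genuine gap: the hypothesis of \cref{prop:criterionforentangledness} is simply \emph{false} for $E$, under any enumeration whatsoever. Take $n=2$ and the continuous function $f = s_2 : {}^\omega 2 \to {}^\omega 2$. If the criterion held for some enumeration $\langle r_\alpha \mid \alpha < 2^{\aleph_0}\rangle$ of $E$, then every sufficiently large initial segment $\{r_\gamma : \gamma < \beta\}$ would be closed under $s_2$ (since $s_2$ maps $E$ onto $E$). But $s_2$ is a fixed-point-free involution on $E$, so for each $\beta$ past the threshold we would have $s_2(r_\beta) = r_\gamma$ for some $\gamma < \beta$; applying $s_2$ again forces $\gamma$ below the threshold, and an injectivity count gives a contradiction. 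Concretely, with your lexicographic enumeration and $f(y_1,\dots,y_{n-1}) = s_2(y_1)$, at every stage $(\beta,1)$ the tuple $(x_\beta,\dots,x_\beta)$ is built from already-enumerated points but $f$ sends it to $s_2(x_\beta) = r_{\beta,1}$, which is \emph{not} yet enumerated. The set $H_0^1$ only ensures the $r_{\alpha,i}$ are distinct; it does nothing for this.

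The point is structural: $E$ is deliberately built to carry a nontrivial continuous self-map, and the criterion of \cref{prop:criterionforentangledness} is precisely a statement that no continuous function can ``jump ahead'' in the enumeration---these are incompatible. That is why the paper does \emph{not} invoke \cref{prop:criterionforentangledness}. Instead it argues by induction on $n$ with an explicit case split on $F$: if $2^{\aleph_0}$-many tuples in $F$ end in a pair $(x_\beta, s_2(x_\beta))$, one shows (using \cref{lemma:extension} and the diagonalization against the $H^n_\gamma$) that the relevant projections cannot all have monochromatic closures, and then exploits non-monochromaticity directly to realize the desired type; otherwise one reduces to the standard discontinuity argument via \cref{lemma:finalarg}. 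The first case is exactly where your reduction breaks down, and it requires real work.
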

\begin{proof}

We prove that $E$ is $(2^{\aleph_0}, n)$-entangled for every $n> 0$ by induction on $n$. The base case $n=1$ is trivial, hence we assume that $E$ is $(2^{\aleph_0}, n)$-entangled  and fix a set $F$ of 1-1, mutually disjoint $(n+1)$-tuples of elements of $E$  with $|F| = 2^{\aleph_0}$ and a $t \in {}^{n+1} 2$ towards showing that $t$ is realized by two elements of $F$.

Seeing $F$ as a set of $(n+1)$-tuples of elements of $2^{\aleph_0} \times 2$, we can also assume without loss of generality that the $(n+1)$-tuples of $F$ are increasing with respect to the lexicographic ordering on  $2^{\aleph_0} \times 2$. We want to find two elements of $F$ realizing $t$.

Suppose first that there exists a subset $F' \subseteq F$ of size $2^{\aleph_0}$ such that for all $r \in F'$ there is a $\beta$ and an $(n-1)$-tuple $s$ such that $r = s^\smallfrown (x_\beta, s_2(x_\beta))$. Fix one such $F'$. We divide the argument in two cases, depending on whether $n = 1$:

\begin{description}
\itemsep0.3em
\item[$n=1$] We claim that $\mathrm{Proj}_1[F']$ is not monochromatic. Indeed, suppose towards a contradiction that it is. Then, there exists an $\alpha$ such that $H_\alpha^1 = {}^\omega 2\times \mathrm{cl}(\mathrm{Proj}_1[F'])$. By construction, $x_\beta \not\in \mathrm{Proj}_1[F']$ for all $\beta > \alpha$, which contradicts $F'$ having cardinality $2^{\aleph_0}$. Hence, $\mathrm{Proj}_1[F']$ is not monochromatic, or, equivalently, $s_2 \upharpoonright \mathrm{Proj}_1[F'] = F'$ is not monotone. Hence, every $t \in {}^2 2$ is realized by two elements of $F'$, as we wanted to show.

\item[$n > 1$] Denote by $J$ the closure of $\mathrm{Proj}_{n}[F']$. Let $D$ be the set of all those $x \in \mathrm{Proj}_{n-1}[F']$ such that $J_x$ is monochromatic. Suppose towards a contradiction that $|D| = 2^{\aleph_0}$. By \cref{lemma:extension}, there exists an $\alpha$ such that $H_\alpha^{n-1} \supseteq J \cap (D\times {}^\omega 2)$. Now pick some $n$-tuple $s^\smallfrown x_\beta \in \mathrm{Proj}_n[F']$ with $\beta > \alpha$ and $s \in D$---it exists because we are assuming $|D| = 2^{\aleph_0}$. In particular, $x_\beta \in (H_\alpha^{n-1})_s$, since $H_\alpha^{n-1} \supseteq J \cap (D\times {}^\omega 2) \supseteq \mathrm{Proj}_n[F'] \cap (D\times {}^\omega 2)$. But as $\alpha < \beta$, it follows directly from our construction that $x_\beta \not\in (H_\alpha^{n-1})_s$, hence the contradiction. 

Thus, $|D| < 2^{\aleph_0}$. In particular, there exists a set $B\subseteq \mathrm{Proj}_{n-1}[F']$ of size $2^{\aleph_0}$ such that $J_x$ is not monochromatic for all $x \in B$. We can assume without loss of generality that $t$ is either equal to $u^\smallfrown \langle 0,0\rangle$  or to  $u^\smallfrown \langle 1,0\rangle$ for some binary $(n-1)$-tuple $u$. If $t = u^\smallfrown \langle 0,0\rangle$ (resp. $u^\smallfrown \langle 1,0\rangle$), then let $C \subseteq B$ be such that $|C| = 2^{\aleph_0}$ and, for some finite binary sequence $v$ of odd (resp. even) length, $J_x$ has nonempty intersection with both $N_{v^\smallfrown 0}$ and $N_{v^\smallfrown 1}$ for all $x \in C$. By the induction hypothesis, $E$ is $(2^{\aleph_0}, n-1)$-entangled. Hence, there are two distinct elements $w,z \in  C$ such that $t(w,z) = u$. Since $J_w$ and $J_z$ have nonempty intersection with both $N_{v^\smallfrown 0}$ and $N_{v^\smallfrown 1}$, we can find $x,y \in F'$, with $x \upharpoonright (n-1)$ and $y \upharpoonright (n-1)$ sufficiently close to $w$ and $z$ respectively, such that:
\begin{enumerate}
\itemsep0.3em
\item $t(x \upharpoonright (n-1), y \upharpoonright (n-1)) = u$;
\item if $t = u^\smallfrown \langle 0, 0\rangle$, then $x(n-1) \in N_{v^\smallfrown 0}$ and $y(n-1) \in N_{v^\smallfrown 1}$; otherwise (that is, if $t = u^\smallfrown \langle 1, 0\rangle$), $x(n-1) \in N_{v^\smallfrown 1}$ and $y(n-1) \in N_{v^\smallfrown 0}$. 
\end{enumerate}
By the way we chose $F'$, we conclude that $t(x, y) = t$.
\end{description}

Now suppose that there is no such $F'$. This case is dealt with as in the standard construction of an entangled set. It follows from our hypothesis that there are $2^{\aleph_0}$-many $r \in F$ for which there is a $\gamma$ and some $n$-tuple $s \in \{x_\beta, s_2(x_\beta) \mid \beta < \gamma\}^n$ such that $r$ is either $s^\smallfrown x_\gamma$ or $s^\smallfrown s_2(x_\gamma)$. We just consider the case in which there exists a subset $F'' \subseteq F$ of size $2^{\aleph_0}$ such that, for all $r \in F''$, $r = s^\smallfrown s_2(x_\gamma)$ for some $\gamma$ and $s \in \{x_\beta \mid \beta < \gamma\}^n$, as the other cases are analogous. Let \lnote*{}{us} consider $F''$ as a map from $({}^\omega 2)^n$ to ${}^\omega 2$, and assume towards a contradiction that it has less than $2^{\aleph_0}$-many discontinuity points. Then, by restricting it if necessary, we can suppose that $F''$ is continuous. By \cref{lemma:extension} (or directly by Kuratowski's extension theorem) there is an $\alpha$ such that $s_2^{-1} \circ F'' \subseteq H_\alpha^n$. Since $F''$ has cardinality $2^{\aleph_0}$, there is a $\gamma$ with $\alpha < \gamma$ and an $s \in \{x_\beta \mid \beta < \gamma\}^n$ such that $s^\smallfrown s_2(x_\gamma) \in F''$ and, a fortiori, $s^\smallfrown x_\gamma \in H^n_\alpha$. But by construction $x_\gamma \not\in (H^n_\alpha)_s$, hence the contradiction. Thus, $F''$ has $2^{\aleph_0}$-many discontinuity points. Since, by induction hypothesis, $E$ is $(2^{\aleph_0}, n)$-entangled, we conclude by \cref{lemma:finalarg} that $t$ is realized by two elements of $F''\subseteq F$.
\end{proof}
This finishes the proof.
\end{proof}

\cref{thm:2} follows directly from \cref{thm:2s} together with the following fact.
\begin{lemma}\label{cor:2}
Suppose that there exists a $\kappa$-crowded, $\kappa$-entangled set of reals with a nontrivial autohomeomorphism. Then, there are two homeomorphic \lnote*{}{sets} of reals $A, B$ such that $A$ is $\kappa$-entangled but $B$ is not $(\kappa, 2)$-entangled.
\end{lemma}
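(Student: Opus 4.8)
The plan is to take $A$ to be the given set $E$ itself --- which is $\kappa$-entangled, as required --- and to construct $B$ by excising a clopen piece of $E$ and re-inserting a translated copy of it into a fresh interval, so that $B$ remains homeomorphic to $E$ but admits a monotone bijection between two disjoint subsets of size $\kappa$, hence fails $(\kappa,2)$-entangledness. First I would reduce to the case $E\subseteq(0,1)$: composing with an increasing homeomorphism $\mathbb{R}\to(0,1)$ preserves $\kappa$-crowdedness and $\kappa$-entangledness and conjugates the given nontrivial autohomeomorphism $h$ to one of the new copy. Since $E$ is a $(\kappa,2)$-entangled set of reals, $\kappa\le|E|\le 2^{\aleph_0}$ and $E$ contains no interval --- an interval $(a,b)\subseteq E$ would make $\{\langle x,\,x+\tfrac{b-a}{2}\rangle : a<x<\tfrac{a+b}{2}\}$ a collection of $2^{\aleph_0}$-many $1$-$1$ pairwise disjoint pairs that avoids the type $\langle 0,1\rangle$ --- so $\mathbb{R}\setminus E$ is dense and $E$ is zero-dimensional. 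Pick $p\in E$ with $h(p)\ne p$; using continuity of $h$ at $p$ and density of $\mathbb{R}\setminus E$, choose $a<p<b$ with $a,b\notin E$ such that, writing $E_0\coloneqq E\cap(a,b)=E\cap[a,b]$, the set $E_1\coloneqq h[E_0]$ is disjoint from $(a,b)$. Then $E_0$ is a nonempty clopen subset of $E$, so $|E_0|\ge\kappa$ by $\kappa$-crowdedness; and $E_1$ is clopen in $E$ (the image of a clopen set under the self-homeomorphism $h$ of $E$) and disjoint from $E_0$.

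Now set $A\coloneqq E$ and $B\coloneqq (E\setminus E_1)\cup(E_0+2)$, where $E_0+2\subseteq(2,3)$ is separated by a gap from $E\setminus E_1\subseteq(0,1)$; consequently $E\setminus E_1$ and $E_0+2$ are both clopen in $B$, just as $E\setminus E_1$ and $E_1$ are both clopen in $E$. The map $\phi\colon E\to B$ that is the identity on $E\setminus E_1$ and sends $z\in E_1$ to $h^{-1}(z)+2$ is a bijection (because $h\colon E_0\to E_1$ is one), and it is a homeomorphism, since $E$ and $B$ are the disjoint unions, as topological spaces, of their respective clopen pieces and $\phi$ restricts to a homeomorphism on each. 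Hence $A$ and $B$ are homeomorphic and $A$ is $\kappa$-entangled. Finally $E_0\subseteq E\setminus E_1\subseteq B$ and $E_0+2\subseteq B$ are disjoint, and $x\mapsto x+2$ is an increasing bijection between them, so $\{\langle x,x+2\rangle : x\in E_0\}$ (shrunk to size exactly $\kappa$ if need be) is a collection of $\kappa$-many $1$-$1$, pairwise disjoint pairs from $B$ that avoids the type $\langle 0,1\rangle$; thus $B$ is not $(\kappa,2)$-entangled.

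I expect the only genuinely delicate point to be the choice of the piece $E_0$: it has to be \emph{clopen} in $E$ --- so that cutting it out and re-gluing the translated copy of its $h$-image yields a homeomorphic, not merely continuous, copy of $E$ --- and at the same time it must be \emph{moved off itself by $h$} and of \emph{full size $\kappa$}. Zero-dimensionality of $E$, which is free from $2$-entangledness, provides small clopen sets; continuity of $h$ lets one shrink to a clopen set that $h$ translates away from itself; and $\kappa$-crowdedness guarantees such a set still has size $\kappa$, which is exactly what makes the resulting monotone bijection witness the failure of $(\kappa,2)$-entangledness and not merely of countable entangledness. The remaining verifications are routine.
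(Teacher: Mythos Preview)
Your proof is correct and follows essentially the same strategy as the paper's: isolate a clopen piece of $E$ that $h$ moves off itself, and build $B$ by placing that piece alongside an order-isomorphic copy in a fresh interval, patching the homeomorphism together from $h$ (or $h^{-1}$) and the translation. The differences are cosmetic---you take $A=E$ while the paper uses the two-piece subset $A=E\cap(I\cup J)$, and you spell out the zero-dimensionality argument needed to produce genuinely clopen pieces, which the paper leaves implicit.
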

\begin{proof}
Fix a $\kappa$-crowded, $\kappa$-entangled $E\subseteq \mathbb{R}$ and a nontrivial autohomeomorphism $h: E\rightarrow E$. In particular there exist two nonempty disjoint open intervals $I, J$ such that $|E \cap I| = |E \cap J| = \kappa$ and $h[E \cap I] = E \cap J$. Fix a nonempty open interval $W$ disjoint from both $I$ and $J$, and fix an increasing homeomorphism $f: J \rightarrow W$. 

Let $A = E \cap(I \cup J) $ and $B = (E \cap J) \cup f[E \cap J]$. The set $A$, being a subset of $E$ of cardinality $\kappa$, is $\kappa$-entangled. The set $B$, on the other hand, is not $(\kappa, 2)$-entangled, as witnessed by the increasing map $f$. Consider the following map
\begin{align*}
\varphi: A &\longrightarrow B\\
		x &\longmapsto \begin{cases}
		h(x) &\text{ if } x \in I,\\
		f(x) &\text{ otherwise.}
		\end{cases}
\end{align*}
The map $\varphi$ is a homeomorphism by disjointness of $I$ and $J$, and so we are done.
\end{proof}

\begin{remark}
As for \cref{thm:1s}, a slight modification of the argument of \cref{thm:2s} yields the following: if $\mathfrak{hm}= 2^{\aleph_0}$, then, for every $n > 1$ and for every cardinal $\kappa \le 2^{\aleph_0}$ with $\mathrm{cf}(\kappa) = \mathrm{cf}(2^{\aleph_0})$, there exists a $\kappa$-crowded, $\kappa$-entangled set of reals with a nontrivial autohomeomorphism.
\end{remark}

\begin{question}
Is $\mathfrak{hm} = 2^{\aleph_0}$ necessary in \cref{thm:2s}?
\end{question}

\begin{question}
Does it follow from $\mathsf{CH}$ that there exists an entangled set that is homogeneous as a topological space?
\end{question}

\section{Definable entangled sets}\label{sec:thm3}

In this section we prove \cref{thm:3}. Let us first observe that \cref{thm:3} is, arguably, optimal, since no analytic set can be $2$-entangled. Indeed, $2$-entangled sets do not satisfy the perfect set property:

\begin{proposition}
No $2$-entangled set of reals contains an uncountable closed set.
\end{proposition}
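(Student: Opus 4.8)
The plan is to show that if a set of reals $A$ contains an uncountable closed set $P$, then $A$ fails to be $2$-entangled by exhibiting an uncountable family of disjoint increasing pairs that avoids some type. First I would recall that an uncountable closed set in $\mathbb{R}$ contains a perfect set, hence contains a copy of the Cantor set; in particular it contains two disjoint perfect subsets $P_0$ and $P_1$ of $A$, each of size continuum, which are order-isomorphic to one another (indeed any two perfect subsets of $\mathbb{R}$ have order-isomorphic dense-in-themselves cores, and we may even arrange $P_0 < P_1$, i.e. every element of $P_0$ lies below every element of $P_1$, by intersecting with suitable intervals).

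Next I would fix an order-isomorphism $\varphi \colon P_0 \to P_1$. Since $P_0 < P_1$, for every $x \in P_0$ we have $x < \varphi(x)$, so the pair $(x,\varphi(x))$ is increasing. The family $F = \{\,(x,\varphi(x)) \mid x \in P_0\,\}$ is an uncountable (size-continuum) family of $2$-tuples of elements of $A$ which are pairwise disjoint: if $x \neq x'$ then $\{x,\varphi(x)\} \cap \{x',\varphi(x')\} = \emptyset$ because $\varphi$ is injective and maps $P_0$ into $P_1$ with $P_0 \cap P_1 = \emptyset$. Now, given distinct $(x,\varphi(x))$ and $(x',\varphi(x'))$ in $F$, the monotonicity of $\varphi$ forces $x < x' \iff \varphi(x) < \varphi(x')$, so this pair always realizes the increasing type $\langle 0,0\rangle$ (when $x<x'$) and never realizes, say, $\langle 1,0\rangle$. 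Hence $F$ witnesses that $A$ is not $(\aleph_1,2)$-entangled — after thinning $F$ to any uncountable subfamily — contradicting $2$-entangledness.

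The only genuinely substantive point, which I would spell out carefully, is the classical fact that any uncountable closed subset of $\mathbb{R}$ contains a perfect set (the Cantor–Bendixson theorem), together with the elementary observation that a perfect subset of $\mathbb{R}$ contains two disjoint perfect subsets separated by an interval: since a perfect set is dense in itself, pick any two of its points $a < b$, find a rational $q$ with $a < q < b$, and take the (nonempty, perfect) traces of the set on $(-\infty,q)$ and on $(q,\infty)$. The order-isomorphism between two perfect subsets of the reals can be obtained either by a back-and-forth argument on countable dense subsets followed by continuous extension, or simply by quoting that every perfect subset of $\mathbb{R}$ has a subset order-isomorphic to the Cantor set; the precise route does not matter for the argument.

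I do not expect any serious obstacle here — everything is soft. If one wants the cleanest possible presentation, one can even avoid order-isomorphisms entirely: let $P \subseteq A$ be a perfect set, split it as above into $P_0 < P_1$ with both perfect, let $C_0 \subseteq P_0$ and $C_1 \subseteq P_1$ be homeomorphic (indeed order-isomorphic) copies of Cantor space via some monotone $\varphi$, and proceed as above. The key step is recognizing that monotonicity of $\varphi$ is exactly what makes $F$ avoid a nontrivial type, which is the defining failure of $2$-entangledness.
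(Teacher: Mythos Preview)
Your argument is correct and shares the paper's core idea: build an uncountable monotone non-identity map inside the closed set, then read off a family of disjoint pairs avoiding a type. The paper takes a slightly different path: it picks a countable $D\subset C$ that is topologically dense in $C$ and dense-unbounded as a linear order, applies Cantor's isomorphism theorem to get a \emph{decreasing} bijection $D\to D$, and extends it to $C$ by closure. Your route---splitting into $P_0<P_1$ and using an \emph{increasing} bijection between Cantor-type subsets---has the mild advantage that mutual disjointness of the pairs $(x,\varphi(x))$ is automatic from $P_0\cap P_1=\emptyset$, whereas the paper's construction implicitly relies on a decreasing self-map having at most one fixed point (and separating points above and below it). One wobble to fix: your parenthetical that ``any two perfect subsets of $\mathbb{R}$ have order-isomorphic dense-in-themselves cores'' is false as stated ($[0,1]$ and the middle-thirds Cantor set are both perfect but not order-isomorphic, since the latter has jumps); your later fallback---that every perfect subset of $\mathbb{R}$ contains an order-isomorphic copy of the Cantor set, obtained by a standard Cantor scheme with $I_{s^\smallfrown 0}<I_{s^\smallfrown 1}$---is the correct statement and is all you need.
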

\begin{proof}
It suffices to show that no closed set of reals is $2$-entangled. Fix a closed uncountable $C \subseteq \mathbb{R}$, and fix a countable $D\subset C$ which is (topologically) dense in $C$ and such that $D$ is unbounded and dense as a linear order. By Cantor's isomorphism theorem, there exists a decreasing map $f: D \rightarrow D$, and, by the closure of $C$, $f$ can be extended to a decreasing map from $C$ to $C$. Therefore, $C$ is not $2$-entangled.
\end{proof}
\begin{corollary}
No analytic set of reals is $2$-entangled.
\end{corollary}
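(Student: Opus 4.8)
The plan is to deduce this immediately from the preceding Proposition together with the classical perfect set property for analytic sets. First I would record the elementary monotonicity observation that any subset $A' \subseteq A$ with $|A'| \ge \aleph_1$ of a $2$-entangled set $A$ is itself $2$-entangled: a collection $F$ of $\aleph_1$-many $1$-$1$ mutually disjoint pairs of elements of $A'$ is, in particular, such a collection in $A$, so $2$-entangledness of $A$ already provides two members of $F$ realizing any prescribed $t \in {}^2 2$.

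Now I would argue by contradiction. Suppose $A \subseteq \mathbb{R}$ is analytic and $2$-entangled. By \cref{def:entangled} we have $|A| \ge \aleph_1$, so $A$ is uncountable; hence, by the perfect set theorem for analytic sets (see e.g.\ \cite{MR1321597}), $A$ contains a perfect set $P$. But $P$ is then an uncountable closed subset of $A$, directly contradicting the Proposition. Equivalently, by the monotonicity observation $P$ would itself be a $2$-entangled set of reals, contradicting the fact established inside the proof of the Proposition that no closed set of reals is $2$-entangled. Either way we reach a contradiction, so no analytic set of reals is $2$-entangled.

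This corollary is entirely routine: the only input beyond the preceding Proposition is the classical perfect set property of $\Sigma^1_1$ sets, and I do not anticipate any genuine obstacle. The one point worth stating explicitly, so that the reduction to closed sets is legitimate, is the monotonicity of $2$-entangledness under passing to uncountable subsets, which is why I would include it as the first step.
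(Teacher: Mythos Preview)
Your proof is correct and follows essentially the same route as the paper: both deduce the corollary from the perfect set property of analytic sets together with the preceding Proposition. The paper's proof is a single sentence invoking exactly this; your added monotonicity observation is correct but unnecessary, since the Proposition is already phrased as ``no $2$-entangled set contains an uncountable closed set,'' so the perfect subset of $A$ yields an immediate contradiction without passing to $P$ as a $2$-entangled set in its own right.
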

\begin{proof}
Analytic sets satisfy the perfect set property---i.e. every uncountable analytic set contains an uncountable closed set.
\end{proof}

In what follows we employ the notation of \cite[\S 13]{MR2731169}. Any real $x \in {}^\omega \omega$ encodes the binary relation $R_x = \{\langle m,k \rangle \mid x(\llangle m,k\rrangle) = 0\}$ on $\omega$, where $\llangle \cdot, \cdot\rrangle : \omega\times\omega \rightarrow \omega$ is a recursive bijection, and consequently it also encodes the structure
\[
M_x = \langle \omega, R_x\rangle
\]
for the language of set theory. Also, each real $x \in {}^\omega \omega$ encodes the sequence of reals $\langle (x)_i \mid i\in\omega \rangle$ defined by $(x)_i(j) = x(\llangle i,j\rrangle)$ for each $i,j\in\omega$.

\begin{theorem}
Suppose that $ \mathbb{R}\subseteq \mathsf{L}$. Then there exists a $\Pi_1^1$ set of reals which is entangled.
\end{theorem}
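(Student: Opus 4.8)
The plan is to construct the set by a transfinite recursion of length $\omega_1$, making each choice canonical with respect to the constructible ordering $<_{\mathsf L}$ and to the $\mathsf L$-hierarchy, so that entangledness falls out of \cref{prop:criterionforentangledness} while a G\"odel-style analysis --- in the spirit of G\"odel's construction of a $\Pi^1_1$ set lacking the perfect set property --- shows the resulting set is $\Pi^1_1$. Note first that $\mathbb R\subseteq\mathsf L$ forces $\mathsf{CH}$, so $2^{\aleph_0}=\aleph_1$ and $<_{\mathsf L}$ enumerates the reals (here, ${}^\omega\omega$) in ordertype $\omega_1$; moreover every countable sequence of reals, every real-coded continuous function, and every level $L_\gamma$ with $\gamma<\omega_1$ lies in $\mathsf L$, which is all the hypothesis is used for.

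First I would set up the recursion. Fix a finite fragment $\sigma_0$ of $\mathsf{ZF}^-+V=\mathsf L$ strong enough that transitive models of it compute the $\mathsf L$-hierarchy, $<_{\mathsf L}$, and the recursion below correctly (legitimate by L\'evy reflection), and fix a $<_{\mathsf L}$-enumeration $\langle f_\beta\mid\beta<\omega_1\rangle$ of all continuous functions from $G_\delta$ subsets of finite powers of ${}^\omega\omega$ into ${}^\omega\omega$. At stage $\alpha<\omega_1$, let $r_\alpha$ be a real coding the pair $(\langle x_\beta\mid\beta<\alpha\rangle,\langle f_\beta\mid\beta<\alpha\rangle)$, let $L_{\gamma_\alpha}$ be the transitive collapse of the Skolem hull of $\{r_\alpha\}$ in the least level of $\mathsf L$ modelling $\sigma_0$ that contains $r_\alpha$, and let $x_\alpha$ be the $<_{\mathsf L}$-least real coding the structure $(L_{\gamma_\alpha},\in)$. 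Because $L_{\gamma_\alpha}$ is pointwise definable from $r_\alpha$, applying the Skolem functions to $r_\alpha$ yields a surjection $\omega\twoheadrightarrow L_{\gamma_\alpha}$ definable over $L_{\gamma_\alpha}$, so such a code already lies in $L_{\gamma_\alpha+1}$; and since $x_\alpha$ codes $(L_{\gamma_\alpha},\in)$ one checks it cannot lie in $L_{\gamma_\alpha}$ itself, while the earlier $x_\beta$'s do, so the ranks $\gamma_\alpha$ strictly increase and the $x_\alpha$ are pairwise distinct. Put $E=\{x_\alpha\mid\alpha<\omega_1\}$; then $|E|=\aleph_1$.

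Entangledness I would get from \cref{prop:criterionforentangledness}, applied for each $n>0$ to the enumeration $\langle x_\alpha\mid\alpha<\omega_1\rangle$: for $\beta<\alpha$ we have $f_\beta$ and all $x_\delta$ ($\delta<\alpha$) inside $L_{\gamma_\alpha}$, so $f_\beta[\{x_\delta\mid\delta<\alpha\}^{n-1}]\subseteq L_{\gamma_\alpha}$ (evaluating a continuous function at a point of its domain is absolute), whereas $x_\alpha\notin L_{\gamma_\alpha}$; hence for a fixed $f=f_{\beta_0}$ and every $\xi>\beta_0$, a value $f(u)=x_\rho$ with $u$ a tuple from $\{x_\gamma\mid\gamma<\xi\}$ and $\rho\ge\xi$ would give the absurdity $x_\rho=f_{\beta_0}(u)\in L_{\gamma_\rho}$, so $f[\{x_\gamma\mid\gamma<\xi\}^{n-1}]\cap E\subseteq\{x_\gamma\mid\gamma<\xi\}$. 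Thus $E$ is $(2^{\aleph_0},n)$-entangled for every $n$, i.e.\ entangled; as recalled earlier no analytic set is $2$-entangled, so $\Pi^1_1$ is optimal.

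For $\Pi^1_1$-ness I would run the G\"odel argument on the structures $M_x=\langle\omega,R_x\rangle$. The target equivalence is: $x\in E$ iff $M_x$ is well-founded and extensional, $M_x\models\sigma_0$, its transitive collapse $L_\gamma$ satisfies a fixed sentence $\Theta$ asserting ``I am the collapse of the Skolem hull of $\{r\}$ in the least $\sigma_0$-model containing $r$, for the $r$ named by the recursion at the stage at which, run inside me, it overflows me'', and finally $x\in L_{\gamma+1}\setminus L_\gamma$ with $x$ the $<_{\mathsf L}$-least real in $L_{\gamma+1}$ coding $(L_\gamma,\in)$. The only genuinely $\Pi^1_1$ clause is ``$M_x$ well-founded''; everything else is arithmetical in $x$ once $L_\gamma$ is read off as the collapse of $M_x$ --- in particular ``$x\in L_{\gamma+1}$'' becomes ``$x$ is definable over $L_\gamma$'', a number-quantifier statement, and the $<_{\mathsf L}$-minimality clause is a conjunction, over the countably many $(\text{formula},\text{parameter})$-indexed elements $z$ of $\mathrm{Def}(L_\gamma)$, of the $\Pi^1_1$ assertion ``$z$ does not code $(L_\gamma,\in)$'', and a countable intersection of sets of the shape ``arithmetical $\to\Pi^1_1$'' is again $\Pi^1_1$. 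The forward direction uses absoluteness of the $\mathsf L$-hierarchy and of the recursion to transitive models of $\sigma_0$; the converse is the observation, built into the recursion, that each $x_\alpha$ codes precisely $L_{\gamma_\alpha}$ and that $L_{\gamma_\alpha}$ witnesses $\Theta$ and the minimality clause. I expect the main obstacle to be exactly this meshing: one must force the real chosen at each stage to code a level of $\mathsf L$ that is simultaneously closed enough to run the recursion to that stage \emph{and} of least rank among codes of that level (the point of passing to a hull pointwise definable from a single real), so that $E$-membership can be certified from $M_x$ together with a single quantifier over an arithmetically enumerated level; keeping every other clause arithmetical, and checking the absoluteness needed for the two directions, is where the care goes.
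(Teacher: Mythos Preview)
Your proposal is essentially correct and shares the paper's underlying idea: produce a set of canonical codes for distinguished countable levels $L_\gamma$, use that such a code cannot lie in the level it codes to feed \cref{prop:criterionforentangledness}, and argue $\Pi^1_1$-ness via a G\"odel-style analysis of $M_x$. The entangledness argument is the same in both.

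The difference is in how uniqueness (and hence $\Pi^1_1$-ness) is achieved. The paper avoids any recursion: it defines $E$ outright by an explicit $\Pi^1_1$ condition in which $(x)_0$ codes some admissible $L_\alpha$ with $L_\alpha\models$ ``every set is countable'', and each $(x)_{i+1}$ is forced to be the $<_{\mathsf L}$-least code for the least limit level containing $(x)_i$. This self-referential $\omega$-tower pins $x$ down uniquely from $\alpha$, and the clauses are visibly $\Pi^1_1$. Your route instead builds $E$ by recursion and then has to \emph{recover} the recursion inside the coded model; this works because (as you note) the minimal $\sigma_0$-level containing $r_\alpha$ coincides with the collapsed hull, so $L_{\gamma_\alpha}$ itself witnesses the overflow stage, and the recursion is absolute to it. What you gain is that the diagonalisation against the $f_\beta$'s is baked directly into $r_\alpha$, making the application of \cref{prop:criterionforentangledness} very transparent. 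What the paper gains is a much shorter $\Pi^1_1$ verification: there is no sentence $\Theta$ to formalise, no absoluteness of a user-defined recursion to check, and no reliance on a choice of fragment $\sigma_0$ or of a canonical coding for $r_\alpha$ (both of which you leave implicit). Your sketch is sound, but expect most of the write-up to go into making $\Theta$ precise and verifying that each clause besides well-foundedness really is arithmetical in $x$; the paper's tower trick sidesteps that work entirely.
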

\begin{proof}
Assume that $\mathbb{R}\subseteq\mathsf{L}$ and denote by $<_{\mathsf{L}}$ the canonical well-ordering of $\mathsf{L}$.
Consider the set $\mathcal{A}$ of all the countable admissible\footnote{An ordinal $\alpha$ is \emph{admissible} if $\mathsf{L}_\alpha$ is a model of $\mathsf{KP}$. We just need that $\mathsf{L}_\alpha$ is a model of a sufficiently large fragment of $\mathsf{ZFC}$.} ordinals $\alpha$ such that $\mathsf{L}_\alpha \vDash $ ``Every set is countable". Note that $\mathcal{A}$ is unbounded in $\omega_1^\mathsf{L} = \omega_1$. The idea is to get a $\Pi_1^1$ set of unique codes for the structures $\langle \mathsf{L}_\alpha, \in \rangle$ with $\alpha \in \mathcal{A}$ (c.f. \cite[Theorem 13.12]{MR2731169}).

Consider the set $E\subseteq {}^\omega \omega$ of all reals $x$ that satisfy the conjunction of the following formul\ae:\vspace{0.3em}

\begin{enumerate}
\itemsep0.3em
\item $M_{(x)_0}$ is well-founded and extensional and $M_{(x)_0} \cong \langle \mathsf{L}_{\alpha}, \in\rangle$ for some  $\alpha \in \mathcal{A}$,

\item For every $i\in\omega$, $M_{(x)_{i+1}}$ is well-founded and extensional and $M_{(x)_{i+1}} \cong \langle \mathsf{L}_{\delta_{i+1}}, \in\rangle$, with $\delta_{i+1}$ being the least limit $\delta < \omega_1$ such that $(x)_i \in \mathsf{L}_\delta$,
\item For every $i\in\omega$, for every $z \in {}^\omega \omega \cap \mathsf{L}_{\delta_{i+1}}$, if $\mathsf{L}_{\delta_{i+1}} \vDash ``z <_\mathsf{L} (x)_i"$, then $M_z \not\cong M_{(x)_i}$.\vspace{0.3em}
\end{enumerate}
By rewriting (1)-(3) in the language of second-order arithmetic, we find that the set $E$ is $\Pi_1^1$---again, see \cite[Theorem 13.12]{MR2731169}. For each $x \in E$, let $\alpha_x$ be such that $M_{(x)_0} \cong \langle \mathsf{L}_{\alpha_x}, \in \rangle$.
\begin{claim}\label{claim1}
For each $x \in E$, $x \not\in \mathsf{L}_{\alpha_x}$.
\end{claim}
\begin{proof}
Suppose towards a contradiction that $x \in \mathsf{L}_{\alpha_x}$. Since $M_{(x)_0}$ is well-founded and extensional, a fortiori $\mathsf{L}_{\alpha_x} \vDash ``M_{(x)_0}$ is well-founded and extensional". But then, since $\alpha_x$ is admissible, the transitive collapse of $M_{(x)_0}$ also belongs to $\mathsf{L}_{\alpha_x}$. Since the transitive collapse of $M_{(x)_0}$ is $\mathsf{L}_{\alpha_x}$, we have reached a contradiction.
\end{proof}

Note that for each $\alpha \in \mathcal{A}$, there exists exactly one $x \in E$ with $\alpha_x = \alpha$. Moreover, note that, for each $\beta > \alpha$ with $\beta \in \mathcal{A}$, the (unique) real $x \in E$ with $\alpha_x = \alpha$ can already be found in $\mathsf{L}_\beta$---this is \lnote*{}{where} we use that $\mathsf{L}_\beta \vDash $``Every set is countable", to ensure that $\mathsf{L}_\alpha$ is countable in $\mathsf{L}_\beta$.

\begin{claim}
$E$ is entangled.
\end{claim}
\begin{proof}
Since for every $\alpha \in \mathcal{A}$ there exists a unique $x \in E$ with $\alpha_x = \alpha$, the well-order on $\mathcal{A}$ naturally induces a well-order on $E$. More precisely, let $\langle \alpha_\nu \mid \nu < \omega_1 \rangle$ be the increasing enumeration of $\mathcal{A}$, and let $\langle x_\nu \mid \nu < \omega_1\rangle$ be the enumeration of $E$ such that $\alpha_{x_\nu} = \alpha_\nu$ for every $\nu < \omega_1$.

It suffices to show that for every continuous map $f$ from a $G_\delta$ subset of $\mathbb{R}^k$ into $\mathbb{R}$ (for some $k \in\omega$), there exists a $\nu < \omega_1$ such that, for all $\mu \ge \nu$,
\begin{equation}\label{def:eq:1}
f\big[\{x_\eta \mid \eta < \mu\}^k\big] \cap E \subseteq \{x_\eta \mid \eta < \mu\},
\end{equation}
as then it follows from \cref{prop:criterionforentangledness} that $E$ is entangled.

Pick a continuous map $f$ from a $G_\delta$ subset of $\mathbb{R}^k$ into $\mathbb{R}$ (for some $k \in\omega$). The map $f$ is coded by some real $\bar{f} \in \mathbb{R}$. Let $\nu < \omega_1$ be large enough so that $\bar{f} \in \mathsf{L}_{\alpha_\nu}$---here we are using the hypothesis $\mathbb{R} \subseteq \mathsf{\mathsf{L}}$. Now pick some $\rho > \mu \ge \nu$ and some $s \in \{x_\eta \mid \eta < \mu\}^k$. Since both $\bar{f}$ and $s$ belong to $\mathsf{L}_{\alpha_\rho}$, it follows that $f(s)$ also belong to $\mathsf{L}_{\alpha_\rho}$. But then, since by \cref{claim1} $x_\rho \not\in \mathsf{L}_{\alpha_\rho}$, it follows $f(s) \neq x_\rho$.  Thus, \eqref{def:eq:1} holds.
\end{proof}
This finishes the proof.
\end{proof}

\section{A $2$-entangled non-separable linear order}\label{sec:thm4}

In this section we prove \cref{thm:4}. As we already noted in the introduction, every $2$-entangled non-separable linear order is, in particular, a Suslin line. It is well-known that the existence of a Suslin line follows from $\diamondsuit$ (e.g. \cite{MR756630,Devlin:1984aa}). We show that from the same hypothesis we can construct a $2$-entangled Suslin line. Recall that the existence of a $2$-entangled Suslin line is already known to be consistent by a forcing argument due to Krueger \cite{MR4080091}.

Recall that a partial order $(T, \le)$ is a \emph{tree} (in the order-theoretic sense) if for every $x \in T$, the set of the predecessors of $x$---i.e. the set $\{y \in T \mid y \le x\}$---is well-ordered with respect to $\le$. Given a node $x \in T$, the \emph{height} of $x$ is the order-type of $\{y \in T \mid y < x\} $. For every tree $(T, \le)$ and ordinal $\alpha$, we let  $\mathrm{Lev}_\alpha(T, \le)$ be the set of all the nodes $x \in T$ of height $\alpha$.

\begin{theorem}
Suppose that $\diamondsuit$ holds. Then, there exists a $2$-entangled non-separable linear order.
\end{theorem}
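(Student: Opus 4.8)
The plan is to build a Suslin tree $(T,\le)$ by recursion on levels using $\diamondsuit$, together with a carefully chosen linear ordering of each level, in such a way that the branches of $T$ (equivalently, a lexicographic-type linear order read off from $T$) form a $2$-entangled linear order. The tree will be a normal $\omega_1$-tree: levels are countable, every node has infinitely many immediate successors, the tree is splitting everywhere, and at limit levels we only add nodes needed to keep branches cofinal. Normality guarantees the resulting linear order is non-separable (in fact Suslin, hence ccc by the Todor\v{c}evi\'c theorem is automatic once it is $2$-entangled, but we get ccc directly from Suslinness of the tree). The candidate linear order $L$ is obtained by putting, at each node, a rational-indexed listing of the immediate successors, so that $L$ is the set of branches ordered lexicographically; alternatively one works directly with a dense set of nodes. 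The point of the $\diamondsuit$ bookkeeping is to seal all potential counterexamples to $2$-entangledness: a $\diamondsuit$-sequence $\langle S_\alpha \mid \alpha<\omega_1\rangle$ guesses, at club-many $\alpha$, initial segments of an uncountable family $F$ of disjoint increasing pairs from $L$ together with a target type $t\in{}^22$, and at stage $\alpha$ we extend the tree above level $\alpha$ so as to force two members of the guessed family to realize $t$; by the usual $\diamondsuit$-reflection this kills every real uncountable family at some stage.

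First I would set up the recursion: maintain at each countable stage $\alpha$ a normal tree $T_\alpha$ of height $\alpha+1$ together with, for each node, an enumeration of its immediate successors in order-type of a fixed countable dense linear order, so that the ``partial'' linear order $L_\alpha$ on the top level $\mathrm{Lev}_\alpha(T_\alpha)$ is determined. At successor stages, split every top node into countably many immediate successors, interleaving their order types densely. At limit stages $\alpha$, the key work happens: given the $\diamondsuit$-guess, which (on a club) codes a maximal antichain or, in our case, an uncountable family $F$ of disjoint increasing $2$-tuples $\langle a_\xi,b_\xi\rangle$ of branches through $T{\upharpoonright}\alpha$ that is ``locally everywhere'' — meaning for cofinally many $\xi$ the branch pair is cofinal in level $\alpha$ — we choose, for each top node $x$ of the limit level that we decide to keep, a branch through $x$; we must do this so that through the ``boundary'' we can later, at stage $\alpha$ and above, find $\xi,\eta$ with $a_\xi<a_\eta$ but $b_\xi>b_\eta$ (or whichever pattern $t$ demands). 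Concretely: pick two branches $P,P'$ of $T{\upharpoonright}\alpha$ with $P$ to the left of $P'$ in $L{\upharpoonright}\alpha$, each approached from within the guessed family, and add nodes at level $\alpha$ extending $P$ and $P'$ but arrange the $\alpha+1$-st level order so that the extension of $P$ ends up to the \emph{right} of the extension of $P'$ in the appropriate coordinate. This ``order-reversal at a limit'' is exactly the mechanism producing entangledness — it is the step that makes the construction differ from a plain Jensen Suslin tree.

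The verification then has three parts. (i) $T$ is a Suslin tree: standard — an uncountable antichain would be guessed on a club by $\diamondsuit$ and sealed into a maximal antichain at a closure point, contradicting that we kept the tree splitting/normal above; this also yields ccc and non-separability of $L$. (ii) $L$ is uncountable and has no isolated points structure issues: immediate from normality (the branch space / the canonical dense set of nodes is uncountable). (iii) $L$ is $2$-entangled: given an uncountable $F$ of disjoint increasing pairs and $t\in{}^22$, the trivial types $\langle0,0\rangle$ and $\langle1,1\rangle$ are always realized (any two comparable-in-the-tree pairs that are far apart realize them — here one uses that distinct branches eventually split), so the content is $t=\langle0,1\rangle$ and $t=\langle1,0\rangle$, and by symmetry it suffices to handle one. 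Thinning $F$, we may assume all pairs $\langle a_\xi,b_\xi\rangle$ have their two coordinates split at height below some club-amount and that the $\diamondsuit$-sequence guesses $F$ (restricted to a countable initial segment) together with $t$ at a closure ordinal $\alpha$ at which the relevant branches are cofinal in level $\alpha$; then the order-reversal performed at stage $\alpha$ directly supplies $\xi,\eta$ below $\alpha$ with $t(\langle a_\xi,b_\xi\rangle,\langle a_\eta,b_\eta\rangle)=t$. The main obstacle — the part requiring real care — is coordinating the limit-stage construction so that a \emph{single} family of newly added level-$\alpha$ nodes simultaneously (a) keeps enough branches cofinal to preserve normality and Suslinness, and (b) realizes the reversal for the guessed $(F,t)$ without destroying reversals arranged at earlier stages; the standard fix is to only ever need to control the order of finitely many (in fact two) branch-extensions at each limit, freely choosing all other new nodes' positions, and to use the tree structure (nodes split, so once two branches diverge their relative $L$-order is frozen) to guarantee that earlier commitments and later ones never conflict. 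I would also remark, as the paper does for Theorems A–C, that this is a Sierpi\'nski/Jensen-style diagonalization and that the same argument relativizes to give a $(\kappa,n)$-entangled Suslin line under the appropriate $\diamondsuit$ for larger $n$ if desired.
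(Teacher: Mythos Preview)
Your overall framework---build a normal $\omega_1$-tree level by level using $\diamondsuit$, read off a lexicographic-type linear order, and seal bad configurations at limit stages---matches the paper's. But the mechanism you describe at limit stages does not work, and it is precisely where the content of the proof lies.

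The gap is in your ``order-reversal at a limit''. You propose to take branches $P,P'$ through $T{\upharpoonright}\alpha$ with $P$ lexicographically left of $P'$, then ``arrange the $\alpha{+}1$-st level order so that the extension of $P$ ends up to the right of the extension of $P'$''. This is impossible: in any lexicographic order, once two branches diverge their relative order is fixed at the splitting level, and nothing done at or above level $\alpha$ can reverse it. So you cannot manufacture a realization of $t$ among the pairs already guessed at stage $\alpha$; their $\preceq$-relations are already frozen. More generally, guessing an initial segment of the family $F$ and then trying to ``force two members to realize $t$'' is the wrong direction---you have no leverage over objects that live entirely below $\alpha$.

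The paper's construction differs in two essential ways. First, the linear order is on the \emph{nodes} of the tree, not its branches. Second, instead of guessing a family of pairs and a type, $\diamondsuit$ guesses a \emph{maximal} $1$-$1$ monotone partial self-map $f_\alpha$ of $\alpha\times\omega$ with no fixed points; killing every such map is equivalent to $2$-entangledness. The real work is then to place each new level-$\alpha$ node $(\alpha,n)$ on top of a carefully chosen cofinal branch $z^n_0\lhd z^n_1\lhd\cdots$ so that $f_\alpha\cup\{(x,y)\}$ is never monotone when $x$ or $y$ is a new node (with the appropriate side conditions when the other coordinate is old). The point is that the $\preceq$-position of a \emph{new} node relative to \emph{old} nodes is governed by which branch it caps, and that choice is free; a somewhat delicate case analysis (eight cases for $z^n_0$, four for each $z^n_{m+1}$) threads each new node between suitable values and preimages of $f_\alpha$. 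A closure argument then shows that any uncountable monotone $\tilde f$ agrees with some $f_\alpha$ on $\alpha\times\omega$, whence $\tilde f$ can have no points of height $\geq\alpha$---contradiction. Your plan becomes correct if you replace ``reverse the order of extensions of two fixed branches'' by ``choose the branches under the new nodes so that no new node can extend the guessed monotone map''; but carrying this out is exactly the case analysis the paper does, and it is not a step one can wave through.
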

\begin{proof}
The construction is similar to the classical construction of a Suslin line under $\diamondsuit$. We define a tree ordering $\unlhd$ on $\omega_1\times \omega$ with $\mathrm{Lev}_\alpha(\omega_1 \times \omega, \unlhd) = \{\alpha\}\times \omega$ for every $\alpha$. For every $x = (\alpha, k) \in \omega_1\times\omega$, we denote $\alpha$ (resp. $k$) by $\mathrm{ht}(x)$ (resp. $n(x)$). Moreover, given $x,y \in \omega_1\times \omega$, we write $x \top y$ (resp. $x \perp y$) when $x$ and $y$ are comparable (resp. incomparable) with respect to $\unlhd$.

We want to define the tree $(\omega_1\times \omega, \unlhd)$ such that the following lexicographic-like linearization of $\unlhd$ is $2$-entangled and non-separable: for all $x,y \in \omega_1\times \omega$,
\begin{multline*}
x \preceq y \text{ if either } x \unlhd y \text{ or } \big( x \perp y \text{ and } \\
n(\min\{z \unlhd x \mid z \not\trianglelefteq y\}) >
n(\min \{z \unlhd y \mid z \not\trianglelefteq x\})\big),
\end{multline*}

\smallskip
\noindent where the minimum is taken with respect to $\unlhd$. From now on, the minima and maxima between nodes of our tree are taken with respect to $\preceq$.

We now introduce a useful binary relation induced by $\preceq$: given $x,y \in \omega_1 \times \omega$, we let $x \prec^+ y$ if $x \preceq y$ and $x \perp y$. The following straightforward observations are often implicitly used afterward: if $x \prec^+ y$, then $z \prec^+ y$ for all $z \unrhd x$; moreover, if $y \preceq x$, then $y \preceq z$ for all $z \unrhd x$.

For the sake of clarity, we denote by $L_\alpha$ the set  $\{\alpha\}\times \omega$ for each $\alpha$. Fix a $\diamondsuit$-sequence $\langle A_\alpha \mid \alpha < \omega_1\rangle$. Fix also a bijection $\psi: \omega_1 \rightarrow (\omega_1 \times \omega)^2$.  We define $\unlhd$ inductively such that:\vspace{0.3em}
\begin{enumerate}
\itemsep0.3em
\item $\unlhd$ is a tree ordering on $\omega_1 \times \omega$ and $\mathrm{Lev}_\alpha(\omega_1 \times \omega, \unlhd) = L_\alpha$ for every $\alpha < \omega_1$,
\item for every $n$ and $\alpha < \omega_1$, there are infinitely many $m$'s such that $(\alpha, n) \unlhd (\alpha+1, m)$,
\item if $\beta < \alpha < \omega_1$ and $x \in L_\beta$, then there is a $y \in L_\alpha$ with $x \lhd y$,
\item Suppose that $\alpha$ is limit and $\psi``A_\alpha \subseteq (\alpha\times \omega)^2$ is a map, denoted for clarity by $f_\alpha$, which is maximal among the 1-1 and monotone maps $f$ from a subset of $\alpha \times \omega$ to $\alpha \times \omega$ such that $f(x) \neq x$ for all $x \in \mathrm{dom}(f)$. Then, for all $x,y \in (\alpha+1) \times \omega$, if either

\smallskip
\begin{enumerate}[label=(4\alph*)]
\item $\mathrm{ht}(x) = \mathrm{ht}(y) = \alpha$, or
\item $\mathrm{ht}(x) < \alpha$ and $\mathrm{ht}(y) = \alpha$ and $x \not\in \mathrm{dom}(f_\alpha)$, or
\item $\mathrm{ht}(x) = \alpha$ and $\mathrm{ht}(y) < \alpha$ and $y \not\in \mathrm{ran}(f_\alpha)$,
\end{enumerate}

\smallskip
\noindent then the map $f_\alpha \cup \{(x,y)\}$ is not monotone.\vspace{0.3em}
\end{enumerate}

In (4) and onward, monotonicity is always with respect to $\preceq$.
Suppose for the moment that we have already defined $\unlhd$ satisfying (1)-(4). We claim that the linear order $(\omega_1 \times \omega, \preceq)$ is $2$-entangled and non-separable.

\begin{claim}
$(\omega_1 \times \omega, \preceq)$ is non-separable.
\end{claim}
\begin{proof}
Fix a countable $D\subseteq \omega_1 \times \omega$. Then, there is an $\alpha < \omega_1$ such that $D\subseteq \alpha \times \omega$. By (2), we can pick $x \in L_\alpha$  and $y \in L_{\alpha+1}$  with $x \lhd y$. Then, the open interval $]x, y[$ in $(\omega_1 \times \omega, \preceq)$ is nonempty as, by (2) again, it contains infinitely many elements of $L_{\alpha+1}$. Moreover, it is straightforward that ${]x,y[} \subseteq (\omega_1\setminus \alpha) \times \omega$. Thus, no element of $D$ belongs to $]x,y[$. We conclude that $(\omega_1 \times \omega, \preceq)$ is non-separable.
\end{proof}

\begin{claim}
$(\omega_1 \times \omega, \preceq)$ is $2$-entangled.
\end{claim}
\begin{proof}
Fix a 1-1 monotone map $f: A \rightarrow \omega_1 \times \omega$, with $A \subseteq \omega_1 \times \omega$, such that $f(x) \neq x$ for all $x \in A$ towards showing that $A$ is countable.

Let $\tilde{f}$ be a maximal 1-1 and monotone extension of $f$ such that $\tilde{f}(x) \neq x$ for all $x \in \mathrm{dom}(\tilde{f})$.

There is a club $C\subseteq\omega_1$ such that, for all $\alpha \in C$, $\alpha \times \omega$ is closed under both $\tilde{f}$ and $\tilde{f}^{-1}$ and  $\tilde{f}  \upharpoonright \alpha \times \omega$ is maximal among the maps from a subset of $\alpha \times \omega$ to $\alpha\times \omega$ that satisfy the same relevant properties of $\tilde{f}$---that is, being 1-1, monotone and pointwise different from the identity. By the properties of the $\diamondsuit$-sequence there is an $\alpha \in C$ with $f_\alpha = \tilde{f} \upharpoonright \alpha \times \omega$. We claim that $\mathrm{dom}(\tilde{f}) \subseteq \alpha \times \omega$, from which the countability of $A$ directly follows.

Suppose towards a contradiction that there exists some $x \in \mathrm{dom}(\tilde{f}) \setminus (\alpha \times \omega)$. Fix one such $x$. Note that, since $\alpha \times \omega$ is closed under $\tilde{f}^{-1}$, we also have $\tilde{f}(x) \not\in \alpha \times \omega$.  Let $x \upharpoonright \alpha$ be the $\unlhd$-predecessor of $x$ of height $\alpha$, and define $\tilde{f}(x) \upharpoonright \alpha$ analogously for $\tilde{f}(x)$.  By (4), the map $(\tilde{f} \upharpoonright \alpha \times \omega) \cup \{(x \upharpoonright \alpha, \tilde{f}(x) \upharpoonright \alpha)\}$ is not monotone.

It is straightforward that for every $z,w \in \omega_1\times \omega$ with $\mathrm{ht}(z) > \mathrm{ht}(w)$, $z \prec w$ if and only if $z \prec^+ w$. Therefore, it follows directly from the useful observation made after the definition of $\prec^+$ and from $(\tilde{f} \upharpoonright \alpha \times \omega) \cup \{(x \upharpoonright \alpha, \tilde{f}(x) \upharpoonright \alpha)\}$ not being monotone that also $(\tilde{f} \upharpoonright \alpha \times \omega) \cup \{(x, \tilde{f}(x))\}$ is not monotone, which is a contradiction.
\end{proof}

We define $\unlhd$ by induction on its levels. Suppose that we have defined $\unlhd$ on $\alpha \times \omega$ so that (1)-(4) holds below $\alpha$, towards extending $\unlhd$ to $(\alpha+1) \times \omega$.

We deal with the only nontrivial case: $\alpha$ is limit, and the hypotheses of (4) are satisfied. Fix an increasing sequence $(\xi_m)_{m\in\omega}$ of countable ordinals such that $\sup_m \xi_m = \alpha$ and fix also an enumeration $(b_n)_{n\in\omega}$ of $\alpha \times \omega$.

We define a family $\langle z^n_m \mid n,m\in\omega\rangle$ of elements of $\alpha \times \omega$ such that $\mathrm{ht}(z^n_m) \ge \xi_m$ and $z^n_{m} \unlhd z^n_{m+1}$ for every $n,m\in\omega$. Once we have defined this family, we extend $\unlhd$ as follows: for every $x \in \alpha \times \omega$ and $n\in\omega$, we let $x \lhd (\alpha, n)$ if $x \unlhd z^n_m$ for some $m$.

We describe how to define the $z^{n}_m$s and only then we show that this definition works---that is, the resulting extension of $\unlhd$ on $(\alpha+1) \times \omega$ still satisfies (1)-(4) below $\alpha+1$.

Fix an $n\in\omega$ and suppose that we have defined $z^k_m$ for all $k < n$ and all $m$, towards defining $z^{n}_m$ for all $m$. Fix an enumeration $(w_m, i_m)_{m\in\omega}$ of the set $((\alpha \times \omega) \cup (\{\alpha\} \times n))\times 2$.

First let us define $z_0^n$. Suppose first that there is no $z \in \mathrm{dom}(f_\alpha) \cup \mathrm{ran}(f_\alpha)$ such that  $b_n \lhd z$: in this case let $z_0^n$ be such that $b_n \lhd z_0^n$ and $\mathrm{ht}(z_0^n) = \max(\mathrm{ht}(b_n), \xi_0)+2$---we can find such $z_0^n$ by (3). 

Now suppose that there is a $z \in \mathrm{dom}(f_\alpha) \cup \mathrm{ran}(f_\alpha)$ such that  $b_n \lhd z$. Fix one such $z$. Let $z^\star = f_\alpha(z)$ if $z \in \mathrm{dom}(f_\alpha)$ and $z^\star = f_{\alpha}^{-1}(z)$ otherwise.  We need to consider the following cases:


\begin{enumerate}[label={(\roman*)}]
\itemsep0.3em
\item $z \top z^\star$ and $f_\alpha$ is increasing:
\newline by the way we chose $z$ and by case assumption, $z,z^\star$ and $b_n$ are all $\unlhd$-compatible and, moreover, $\max(\min(z,z^\star), b_n) \lhd \max(z,z^\star)$. Let $\delta = \mathrm{ht}(\max(\min(z, z^\star), b_n))+1$, and let $i \in \omega$ be such that $(\delta, i) \unlhd \max(z, z^\star)$. By (2), we can pick $j > i$ such that $\max(\min(z, z^\star), b_n) \lhd (\delta, j)$. By (3) we can pick $z_0^n$ such that $(\delta, j) \unlhd z_0^n$ and $\mathrm{ht}(z_0^n) \ge \xi_0$. In particular, we have $\min(z,z^\star) \prec z_0^n \prec^+ \max(z, z^\star)$. 

\item $z \top z^\star$ and $f_\alpha$ is decreasing:
\newline by (3) we can pick $z_0^n$ such that $\max (z,z^\star) \unlhd z_0^n$ and $\mathrm{ht}(z_0^n) \ge \xi_0$. In particular, we have $z,z^\star \preceq z_0^n$.

\item $z \prec^+ z^\star$ and $b_n \not\trianglelefteq z^\star$ and $f_\alpha$ is increasing:
\newline by (3) we can pick $z_0^n$ such that $z \unlhd z_0^n$ and $\mathrm{ht}(z_0^n) \ge \xi_0$. In particular, we have $z \preceq z_0^n \prec^+ z^\star$.

\item $z^\star \prec^+ z$ and $b_n \not\trianglelefteq z^\star$ and $f_\alpha$ is increasing:
\newline let $i \in \omega$ be such that $(\mathrm{ht}(b_n)+1, i) \unlhd z$. By (2), we can fix some $j > i$ such that $b_n \lhd (\mathrm{ht}(b_n)+1, j)$. By (3), we can pick $z_0^n$ such that $(\mathrm{ht}(b_n)+1, j) \unlhd z_0^n$ and $\mathrm{ht}(z_0^n) \ge \xi_0$. In particular, we have $z^\star \prec z_0^n \prec^+ z$.

\item $z \perp z^\star$ and $b_n \unlhd z, z^\star$ and $f_\alpha$ is increasing:
\newline by (3), we can pick $z_0^n$ such that $\min (z,z^\star) \unlhd z_0^n$ and $\mathrm{ht}(z_0^n) \ge \xi_0$. In particular, we have $\min (z,z^\star) \preceq z_0^n \prec^+ \max (z,z^\star)$.

\item $z \prec^+ z^\star$ and $b_n \not\trianglelefteq z^\star$ and $f_\alpha$ is decreasing:
\newline pick $z_0^n$ as in (iv). In particular, we have $z_0^n \prec^+ z,z^\star$.

\item $z^\star \prec^+ z$ and $b_n \not\trianglelefteq z^\star$ and $f_\alpha$ is decreasing:
\newline pick $z_0^n$ as in (iii). In particular, we have $z,z^\star \preceq z_0^n$.

\item $z \perp z^\star$ and $b_n \unlhd z, z^\star$ and $f_\alpha$ is decreasing:
\newline by (3), we can pick $z_0^n$ such that $\max (z,z^\star) \unlhd z_0^n$ and $\mathrm{ht}(z_0^n) \ge \xi_0$. In particular, we have $z,z^\star \preceq z_0^n$.
\end{enumerate}

Suppose that we have defined $z_m^n$ towards defining $z_{m+1}^n$. If either $i_m = 0$ and $w_m \in \mathrm{dom}(f_\alpha)$, or $i_m = 1$ and $w_m \in \mathrm{ran}(f_\alpha)$, then simply let $z_{m+1}^n$ be such that $z_{m}^n \unlhd z_{m+1}^n$ and $\mathrm{ht}(z_{m+1}^n) \ge \xi_{m+1}$---we can find such a $z_{m+1}^n$ by (3). 

Now assume that either $i_m = 0$ and $w_m \not\in \mathrm{dom}(f_\alpha)$, or $i_m = 1$ and $w_m \not\in \mathrm{ran}(f_\alpha)$. If there is no $z$ with $z_m^n \lhd z$ such that $z \in \mathrm{ran}(f_\alpha)$ if $i_m = 0$ and $z \in \mathrm{dom}(f_\alpha)$ if $i_m = 1$, then let $z_{m+1}^n$ be such that $z_m^n \lhd z_{m+1}^n$ and $\mathrm{ht}(z_{m+1}^n) \ge \xi_{m+1}$ and $z_{m+1}^n \neq w_m$. Otherwise, fix one such $z$ and let $z^\star$ denote $f^{-1}_\alpha(z)$ if $i_m  = 0$, and $f_\alpha(z)$ if $i_m = 1$. Now we need to consider the following cases:
\begin{enumerate}[label={(\alph*)}]
\itemsep0.3em
\item $f_\alpha$ is increasing and $z^\star \prec w_m$:
\newline let $i \in \omega$ be such that $(\mathrm{ht}(z_m^n)+1, i) \unlhd z$. By (2), there we can fix some $j > i$ such that $z_m^n \lhd (\mathrm{ht}(z_m^n)+1, j)$. By (3), we can pick $z_{m+1}^n$ such that $(\mathrm{ht}(z_m^n)+1, j) \unlhd z_{m+1}^n$ and $\mathrm{ht}(z_{m+1}^n) \ge \xi_{m+1}$. In particular, we have $z_{m+1}^n \prec^+ z$

\item $f_\alpha$ is decreasing and $w_m \prec z^\star$:
\newline as in (a).

\item $f_\alpha$ is increasing and $w_m \prec z^\star$:
\newline by (3), we can pick $z_{m+1}^n$ such that $z \unlhd z_{m+1}^n$ and $\mathrm{ht}(z_{m+1}^n) \ge \xi_{m+1}$.

\item $f_\alpha$ is decreasing and $z^\star \prec w_m$:
\newline as in (c).
\end{enumerate}

This ends the inductive definition of the $z_m^n$s. We are left to prove that (1)-(4) are still satisfied below $\alpha+1$. 

Since we chose the $z_m^n$s such that $\sup_m \mathrm{ht}(z_m^n) = \sup_m \xi_m = \alpha$ for every $n$, we conclude that $\mathrm{Lev}_\alpha((\alpha+1)\times \omega, \unlhd) = L_\alpha$---that is, (1) is satisfied. 

Condition (2) trivially holds since it holds below $\alpha$ by the induction hypothesis, and $\alpha$ is limit.

Moreover, since $b_n \unlhd z_0^n \lhd (\alpha, n)$ for every $n$ by choice of $z_0^n$, (3) also holds---that is, every element of $\alpha\times \omega$ has an $\unlhd$-successor in $L_\alpha$.

Finally, we need to check that (4) is satisfied. We prove it by induction in the following sense: we suppose that for all $x,y \in (\alpha \times \omega) \cup (\{\alpha\} \times n)$ satisfying one among (4a)-(4c), the map $f_\alpha \cup \{(x,y)\}$ is not monotone, towards showing that the same can be said for all $x,y$ taken in $(\alpha \times \omega) \cup (\{\alpha\} \times (n+1))$.

We will check only some representative cases, as the other are analogous. From now on we assume that $f_\alpha$ is increasing.

Let us show that $f_\alpha \cup \{((\alpha, n), (\alpha, n))\}$ is not increasing.  Suppose that there is a $z \in \mathrm{dom}(f_\alpha) \cup \mathrm{ran}(f_\alpha)$ such that  $b_n \lhd z$. Fix the $z$ satisfying this property we chose when defining $z_0^n$---we assume for simplicity $z \in \mathrm{dom}(f_\alpha)$, as the case $z \in \mathrm{ran}(f_\alpha)$ is analogous. Then, looking at the possible cases (i.e. (i), and (iii)-(v)), either $z_0^n \prec^+ z$ and $f_\alpha(z) \preceq z_0^n$, or $z_0^n \prec^+ f_\alpha(z)$ and $z \preceq z_0^n$. In particular, since $z_0^n \lhd (\alpha, n)$, either $(\alpha, n) \prec z$ and $f_\alpha(z) \prec (\alpha, n)$, or $z \prec (\alpha, n)$ and $(\alpha, n) \prec f_\alpha(z)$. In other words, the pairs $((\alpha, n), (\alpha, n))$ and $(z, f_\alpha(z))$ realize the type $\langle 1,0\rangle$. 

Now suppose instead that there is no $z \in \mathrm{dom}(f_\alpha) \cup \mathrm{ran}(f_\alpha)$ such that $b_n \lhd z$. By definition of $z_0^n$ and case assumption, we have $\mathrm{ht}(z_0^n) \ge \mathrm{ht}(b_n)+2$. Therefore, we can fix some $t$ such that $b_n \lhd t \lhd z_0^n$. By case assumption, both $t$ and $z_0^n$ do not belong to $\mathrm{dom}(f_\alpha) \cup \mathrm{ran}(f_\alpha)$. By maximality of $f_\alpha$, we conclude that there is an $x \in \mathrm{dom}(f_\alpha)$ such that $(x, f_\alpha(x))$ and $(t, z_0^n)$ realize $\langle 1,0\rangle$.From the latter observation and our case assumption, it follows that either $t \prec^+ x$ and $f_\alpha(x) \preceq z_0^n$, or $x \preceq t$ and $z_0^n \prec^+ f_\alpha(x)$.
Since $t \lhd z_0^n \lhd (\alpha, n)$, we conclude that the pairs $((\alpha, n), (\alpha, n))$ and $(x, f_\alpha(x))$ realize the type $\langle 1,0\rangle$. 

We have shown that $f_\alpha \cup \{((\alpha, n), (\alpha, n))\}$ is not increasing. Now fix some $k$ with $k < n$ and let us show that $f_\alpha \cup \{(\alpha, k), (\alpha, n)\}$ is not increasing. Fix  an $m$ such that $w_m = (\alpha, k)$ and $i_m = 0$. Clearly $w_m = (\alpha, k) \not\in \mathrm{dom}(f_\alpha)$.

Suppose first that there is a $z$ with $z_m^n \lhd z$ such that $z \in \mathrm{ran}(f_\alpha)$. Fix the $z$ satisfying this property we chose when defining $z_{m+1}^n$. Looking at the cases (a) and (c) in the definition of $z_{m+1}^n$, it is easy to see that either $f_\alpha^{-1}(z) \prec (\alpha, k)$ and $z_{m+1}^n \prec^+ z$, or $(\alpha, k) \prec f_\alpha^{-1}(z)$ and $z \preceq z_{m+1}^n$. Hence, the pairs $((\alpha, k), (\alpha, n))$ and $(f_\alpha^{-1}(z), z)$ realize $\langle 1,0\rangle$. 

Now suppose instead that there is no $z$ with $z_m^n \lhd z$ such that $z \in \mathrm{ran}(f_\alpha)$. Clearly, this means that $z_{m+1}^n \not\in \mathrm{ran}(f_\alpha)$. By the induction hypothesis, there is some $x \in \mathrm{dom}(f_\alpha)$ such that the pairs $(x, f_\alpha(x))$ and $((\alpha, k), z_{m+1}^n)$ realize $\langle 1,0\rangle$. But now note that, again by case assumption, we must have either $z_{m+1}^n \prec^+ f_\alpha(x)$ or $f_\alpha(x) \prec z_{m+1}^n$. Therefore, the pairs $(x, f_\alpha(x))$ and $((\alpha, k), (\alpha, n))$ also realize $\langle 1,0\rangle$.

We have shown that $f_\alpha \cup \{((\alpha, k), (\alpha, n))\}$ is not increasing. Now fix some $y \in \alpha \times \omega$ with $y \not\in \mathrm{ran}(f_\alpha)$ towards showing that $f_\alpha \cup \{((\alpha, n), y)\}$ is not increasing. Fix an $m$ such that $w_m = y$ and $i_m = 1$. 

If there is a $z$ with $z_m^n \lhd z$ such that $z \in \mathrm{dom}(f_\alpha)$, the argument used to show that there is some $x \in \mathrm{dom}(f_\alpha)$ such that  $(x, f_\alpha(x))$ and $((\alpha, n), y)$ realize $\langle 1,0\rangle$ is the same as in the previous case. So, let us assume that there is no such $z$. In particular, we must have $z_{m+1}^n \not\in \mathrm{dom}(f_\alpha)$. Moreover, by the way we chose $z_{m+1}^n$, we also have $z_{m+1}^n \neq w_m = y$. By maximality of $f_\alpha$, there is some $x \in \mathrm{dom}(f_\alpha)$ such that the pairs $(x, f_\alpha(x))$ and $(z_{m+1}^n, y)$ realize $\langle 1,0\rangle$. But now note that, again by case assumption, we must have either $z_{m+1}^n \prec^+ x$ or $x \prec z_{m+1}^n$. Therefore, the pairs $(x, f_\alpha(x))$ and $((\alpha, n), y)$ also realize $\langle 1,0\rangle$.

The other cases are analogous.
\end{proof}

\begin{question}
Can there be a 2-entangled linear order but no separable 2-entangled linear order?\footnote{A positive answer was given by Mart\'{i}nez-Ranero and Polymeris \cite{martinezranero2025entangledsuslinlinesoga}.}
\end{question}

\printbibliography

\end{document}